\documentclass{article}[12pt]
\usepackage{color}
\usepackage{tikz}
\usetikzlibrary {intersections}
\usetikzlibrary {intersections,through}
\usepackage[title]{appendix}
\usepackage{mathrsfs}
\usepackage{mathtools}
\usepackage{graphicx}
\usepackage{epstopdf}
\usepackage{stmaryrd}
 \usepackage{subfigure}
\usepackage{float}
\usepackage{amsmath}
    \usepackage{bm}
\usepackage{amsfonts,amssymb}
  \usepackage{dsfont}
\usepackage{amsfonts}
\usepackage{mathrsfs}
  \usepackage{pifont}
\numberwithin{equation}{section}
 \usepackage{amsthm}
 \newtheorem{theorem}{Theorem}[section]
\newtheorem{lemma}[theorem]{Lemma}

\newtheorem{corollary}[theorem]{Corollary}
\newtheorem{remark}[theorem]{Remark}

 \usepackage{amsmath}
  \usepackage[hidelinks]{hyperref}
 \usepackage{multirow}

\begin{document}
\title{A Pressure-Robust Nonconforming Immersed Finite Element Method for Stokes Interface Problems}
\author{
Haifeng Ji\footnotemark[1]$~^,$\footnotemark[2] \qquad
Feng Wang\footnotemark[2]$~^,$\footnotemark[3]
}
\footnotetext[1]{Jiangsu Key Laboratory of Quantum Computing Science and Devices, School of Science, Nanjing University of Posts and Telecommunications, Nanjing, Jiangsu 210023, China  (hfji1988@foxmail.com)}
\footnotetext[2]{Key Laboratory of NSLSCS, Ministry of Education, School of Mathematical Sciences, Nanjing Normal University, Nanjing, Jiangsu 210023, China (fwang@njnu.edu.cn)}
\footnotetext[3]{Corresponding author}

\date{}
\maketitle

\begin{abstract}
It is well established that an appropriate modification of test functions  may lead to pressure-robust mixed methods for Stokes problems.  However,  for  immersed finite element approximations of Stokes interface problems on unfitted meshes, it remains unclear  whether the velocity error is independent of the pressure, since the velocity and  the pressure are coupled in one of the interface conditions.  In this paper, we provide a  positive answer through a novel decomposition of the discontinuous pressure into a continuous component and a velocity-dependent discontinuous component.
We demonstrate that the immersed Crouzeix--Raviart/$P_0$ element method achieves pressure robustness via an $H(\operatorname{div})$-conforming reconstruction of the test functions on the right-hand side.   The stability and optimal error estimates of the proposed  method are  established with constants independent of the interface position relative to the mesh. Numerical experiments are presented to validate the theoretical findings. 
\end{abstract}

\textbf{Keywords.}
Stokes interface problem, immersed finite element, pressure robustness,  unfitted mesh,  nonconforming finite element

\textbf{AMS subject classifications.}
65N15, 65N30, 65N12, 76D07

\section{Introduction} 
Two-phase incompressible flows arise in many scientific and engineering applications, such as multiphase fluid dynamics, biomedical flows, and material processing. When inertial effects are negligible, the Stokes interface problem serves as a suitable model while retaining the essential challenges in designing effective numerical methods.
Let $\Omega \subset \mathbb{R}^2$ be a bounded convex domain with polygonal boundary $\partial \Omega$, and let $\Gamma$ be a simple  closed $C^2$  curve immersed in $\Omega$, separating it into two subdomains $\Omega^+$ and $\Omega^-$.
The Stokes interface problem reads: find a velocity field $\boldsymbol{u}$ and a pressure field $p$ such that
\begin{subequations}\label{originalpb0}
\begin{align}
-\nabla\cdot \boldsymbol{\sigma}(\mu,\boldsymbol{u},p) &=\boldsymbol{f}  \qquad\mbox{in } \Omega^+\cup\Omega^-,\label{originalpb1}\\
\nabla\cdot \boldsymbol{u}&=0\qquad\mbox{in } \Omega,\label{originalpb2}\\
[\boldsymbol{\sigma}(\mu,\boldsymbol{u},p)\boldsymbol{n}]_\Gamma&=\boldsymbol{0}\qquad\mbox{on } \Gamma,\label{jp_cond1}\\
[\boldsymbol{u}]_\Gamma&=\boldsymbol{0}\qquad\mbox{on } \Gamma,\label{jp_cond2}\\
\boldsymbol{u}&=\boldsymbol{0} \qquad\mbox{on } \partial\Omega,\label{originalpb5}
\end{align}
\end{subequations}
where $\boldsymbol{\sigma}(\mu,\boldsymbol{u},p)=2\mu \boldsymbol{\epsilon}(\boldsymbol{u})-p\mathbb{I}$ denotes  the total stress tensor, $\boldsymbol{\epsilon}(\boldsymbol{u})=\frac{1}{2}(\nabla \boldsymbol{u}+(\nabla \boldsymbol{u})^\top)$ represents the strain tensor,   $\mathbb{I}$ is the identity matrix, $\boldsymbol{n}$ is the unit normal to  $\Gamma$ pointing from $\Omega^-$ to $\Omega^+$, $[\boldsymbol{v}]_\Gamma$ stands for  the jump of $\boldsymbol{v}$ across $\Gamma$, i.e., $[\boldsymbol{v}]_\Gamma=\boldsymbol{v}^+|_\Gamma-\boldsymbol{v}^-|_\Gamma$ with $\boldsymbol{v}^\pm=\boldsymbol{v}|_{\Omega^\pm}$,  $\boldsymbol{f} \in L^2(\Omega)^2$ is a given body force, and $\mu$ is a piecewise constant viscosity with $\mu|_{\Omega^\pm} = \mu^\pm > 0$.
If the traces of $\nabla\cdot\boldsymbol u^\pm$ on $\Gamma$ are well defined, then \eqref{originalpb2} provides an additional  relationship
\begin{equation}\label{jp_cond3}
[\nabla \cdot \boldsymbol{u}]_\Gamma=0\quad\mbox{ on }\Gamma.
\end{equation}
We assume, without loss of generality, that $\partial \Omega^- = \Gamma$, and that there exists a positive constant $\varepsilon_0$ such that $\mathrm{dist}(\Gamma, \partial \Omega) \geq \varepsilon_0$.
For clarity of presentation, we first restrict our attention to the homogeneous interface condition \eqref{jp_cond1}. The extension to the case with surface tension is deferred to Section~\ref{sec_exten}.

For interface problems, unfitted mesh finite element methods, in which the mesh does not align with the interface, have attracted considerable attention due to their flexibility in handling complex or moving interfaces.
To ensure stability and accuracy of the finite element method, special treatment is required for interface elements that are cut by the interface.
In general, two main strategies exist for developing unfitted mesh finite element methods with optimal convergence: one enriches the standard  finite element space by introducing additional degrees of freedom on interface elements to resolve discontinuities (see, e.g., \cite{hansbo2014cut, cattaneo2015stabilized, kirchhart2016analysis, guzman2018inf, caceres2020new, badia2018mixed, burman2021unfitted}), while the other modifies the standard  finite element space to strongly incorporate interface conditions without altering the original degrees of freedom. The immersed finite element (IFE) method is a typical representative of the latter approach. 
The first IFE method was developed in \cite{li1998immersed} for one-dimensional elliptic interface problems, and since then, the method has been extensively extended and analyzed in numerous studies \cite{li2004immersed, he2012convergence, taolin2015siam, GuzmanJSC2017, Guojcp2020, 2021ji_IFE, ji3Dnonconforming}.
A key advantage of the IFE method over other unfitted mesh finite element methods is that its IFE space is isomorphic to the traditional finite element space on the same mesh,  independent of the interface location with respect to the mesh.
In this work, this property plays a crucial role in our development of a pressure-robust IFE method for the Stokes interface problem, as it eliminates the need for a pressure penalty term.

Developing and analyzing IFE methods for the Stokes interface problem is considerably more challenging than for elliptic interface problems, due to the coupling of velocity and pressure in the interface conditions.
The first IFE method for the Stokes interface problem was developed in \cite{adjerid2015immersed}, where velocity and pressure were treated as a single vector-valued function to enforce the coupled interface conditions. Subsequent developments include nonconforming IFE methods on triangular and rectangular meshes \cite{jones2021class}, the Taylor--Hood IFE method \cite{chen2021p2}, and the MINI IFE method \cite{ji2025mini}. The nonconforming IFE method based on the standard Crouzeix--Raviart element was analyzed in \cite{2021ji_IFE_stoke}. 
However, these methods are not pressure robust: the velocity error depends on the pressure, and modifying the body force by a gradient field, which changes only the pressure, may substantially affect the discrete velocity \cite{john2017divergence}. 
For standard Stokes equations without an interface, there are various ways to design pressure-robust schemes, for example, by constructing divergence-free elements \cite{scott1985norm,zhang2005new}. 
Recently, several studies \cite{liu2023cutfem,burman2024cut,frachon2024divergence} have been conducted on divergence-free cut finite element methods for solving  Stokes equations with smooth boundaries, where additional degrees of freedom are required and the boundary conditions are weakly enforced. 
When solving the interface problem within the IFE framework, it appears challenging to construct IFEs on interface elements that simultaneously satisfy the coupled interface conditions, the divergence-free constraint, and discrete inf-sup stability. 
In \cite{zhu2024divergence}, the authors proposed a divergence-free Petrov-Galerkin immersed finite element method (PG-IFEM), in which the velocity space is enriched by the lowest-order Raviart--Thomas element space. Nevertheless, establishing the inf-sup condition for PG-IFEM remains challenging. 
Guo et al.~\cite{guo2023solving} established the inf-sup stability of
a related PG-IFEM for elliptic interface problems under suitable assumptions
on the coefficient contrast.
An alternative approach to achieving pressure robustness is to replace the test function on the right-hand side of the discretization with its divergence-conforming reconstruction \cite{LINKE2014782}.

In this work, we  develop a pressure-robust  IFE method for the Stokes interface problem, based on the nonconforming IFE  introduced in \cite{2021ji_IFE_stoke}.
The nonconforming IFE is a modification of the classical Crouzeix--Raviart/$P_0$ element   \cite{crouzeix1973conforming}, 
which is a classical lowest-order stable finite element pair for the Stokes problem.
The interface conditions are strongly enforced on the approximate interface  without altering the standard integral-type degrees of freedom.  As  a result, the  discrete velocity is discretely divergence-free. However, the method in \cite{2021ji_IFE_stoke} is not  pressure-robust, as illustrated by the numerical experiments in Section~\ref{sec_num}.

It seems difficult to obtain a pressure-robust scheme using the approach developed in  \cite{LINKE2014782}, since  the  discrete velocity and pressure are coupled in the IFE space due to the interface condition \eqref{jp_cond1}. By revisiting the interface condition, we write the pressure $p$ as a continuous part $\hat{p}$ and a discontinuous part  $R(\boldsymbol{u})$, which depends on the velocity $\boldsymbol{u}$ and  satisfies $  [R(\boldsymbol{u})\boldsymbol{n}]_{\Gamma} = [\boldsymbol{\sigma}(\mu,\boldsymbol{u},0)\boldsymbol{n}]_\Gamma $.  Thus, the interface condition \eqref{jp_cond1} can be rewritten as
$  [\boldsymbol{\sigma}(\mu,\boldsymbol{u},R(\boldsymbol{u}))\boldsymbol{n}]_\Gamma =\boldsymbol{0}$, which depends only on the velocity.
Fortunately, the discrete pressure can also be decomposed into a standard piecewise-constant part and a discrete velocity-dependent part.
With this decomposition, we prove that the proposed IFE method is inf-sup stable without requiring a pressure-penalty term commonly used in existing IFE formulations. This enables us to define the space of discretely divergence-free functions and thereby reformulate the discrete method as an elliptic problem involving only the velocity.
Another key ingredient in establishing pressure robustness is the commutative diagram property of certain interpolation operators. 
Using the discrete interface conditions, we show that this property also holds for the IFE interpolation operators.
 By carefully estimating the consistency error arising from the variational crime of using a lowest-order Raviart--Thomas velocity reconstruction, we derive optimal pressure-robust  error estimates for the proposed IFE method, with constants independent of the position of the interface relative to the mesh.

The proposed method differs from that in   \cite{2021ji_IFE_stoke}, since
the additional pressure stabilization term $J_h(\cdot,\cdot)$  in \cite{2021ji_IFE_stoke} is not needed, and we employ a divergence-conforming  reconstruction for the test function on the right-hand side.  Moreover, we develop a new technique—the decomposition of the discrete pressure into  the velocity-dependent component $R_h(\boldsymbol{u}_h)$ and the piecewise constant component $\hat{p}_h$. This decomposition provides a new perspective for understanding and establishing the pressure robustness of the method.
 
The outline of the paper is as follows.
In Section~\ref{sec_pre}, we introduce the necessary preliminaries. In Section~\ref{sec_FEM}, we formulate our pressure-robust IFE method. In Section~\ref{sec_pro},  several key properties of the IFE space are established. In Section~\ref{sec_analy}, we present the theoretical analysis of our IFE method.
 In Section~\ref{sec_exten}, we extend the IFE method to problems with surface tension.
In Section~\ref{sec_num}, we provide numerical examples to validate the theoretical findings. Finally, conclusions are drawn in Section~\ref{sec_con}. 

\section{Preliminaries}\label{sec_pre}
Let $k\geq 0$ be an integer and $1\leq p\leq \infty$.  We adopt the standard notation $W^k_p(D)$ for Sobolev spaces on a domain $D$, with the norm $\|\cdot\|_{W^k_p(D)}$ and the seminorm $|\cdot|_{W^k_p(D)}$. In particular, $W^k_2(D)$ is denoted by $H^{k}(D)$ with the norm  $\|\cdot\|_{H^{k}(D)}$ and the seminorm $|\cdot|_{H^{k}(D)}$. 
If $D\cap \Omega \not=\emptyset$, we define subdomains $D^\pm=D\cap \Omega^\pm$ and broken Sobolev spaces
\begin{equation*}
H^k(\cup D^\pm)=\{v\in L^2(D) : v|_{D^\pm} \in H^k(D^\pm)\},
\end{equation*}
equipped with the norm $\|\cdot\|_{H^k(\cup D^\pm)}$ and the  seminorm $|\cdot|_{H^k(\cup D^\pm)}$, satisfying
$$
\|\cdot\|^2_{H^k(\cup D^\pm)}=\|\cdot\|^2_{H^k(D^+)}+\|\cdot\|^2_{H^k(D^-)}, \quad|\cdot|^2_{H^k(\cup D^\pm)}=|\cdot|^2_{H^k(D^+)}+|\cdot|^2_{H^k(D^-)}.
$$

As usual, $H_0^1(D)=\{v\in H^1(D) : v=0 \mbox{ on }\partial D\}$ and $L_0^2(D)=\{q\in L^2(D) : \int_D q=0\}$.
Let $\boldsymbol{V}=H_0^1(\Omega)^2$  and $Q=L_0^2(\Omega)$. The weak formulation of (\ref{originalpb0}) reads: find $(\boldsymbol{u}, p)\in \boldsymbol{V}\times Q$  such that
\begin{equation}\label{weakform0}
A((\boldsymbol{u},p),(\boldsymbol{v},q)):=a(\boldsymbol{u}, \boldsymbol{v})+b(\boldsymbol{v}, p)-b(\boldsymbol{u}, q) = l(\boldsymbol{v})\quad \forall \,  (\boldsymbol{v},q) \in \boldsymbol{V}\times Q,
\end{equation}
where
$a(\boldsymbol{u}, \boldsymbol{v})=\int_\Omega2\mu\boldsymbol{\epsilon}(\boldsymbol{u}):\boldsymbol{\epsilon}(\boldsymbol{v})$, $b(\boldsymbol{v}, q)=-\int_\Omega q\nabla\cdot \boldsymbol{v}$ and $l(\boldsymbol{v})=\int_\Omega \boldsymbol{f}\cdot\boldsymbol{v}.$ 

The well-posedness of this weak formulation can be found in \cite{gross2011numerical}.
For the convergence analysis, we assume that the solution has a higher regularity in each
subdomain, i.e., $(\boldsymbol{u}, p)\in \widetilde{\boldsymbol{H}^2H^1}$ with
\begin{equation*}
\widetilde{\boldsymbol{H}^2H^1}=\{ (\boldsymbol{v}, q)\in H^2(\cup\Omega^\pm)^2\times H^1(\cup\Omega^\pm) : [\boldsymbol{\sigma}(\mu,\boldsymbol{v},q)\boldsymbol{n}]_\Gamma=0, [\boldsymbol{v}]_\Gamma=\boldsymbol{0}, [\nabla\cdot \boldsymbol{v}]_\Gamma=0 \}.
\end{equation*}

For any $(\boldsymbol{v},q)\in \widetilde{\boldsymbol{H}^2H^1}\cap (\boldsymbol{V}\times Q)$, we construct a new function $\tilde{q}=R(\boldsymbol{v})$, with $R$ being a linear operator, such that 
\begin{equation}\label{cons_qprime}
 (\boldsymbol{v},\tilde{q})\in \widetilde{\boldsymbol{H}^2H^1}\cap (\boldsymbol{V}\times Q)~\mbox{ and }~\|\tilde{q} \|_{H^1(\cup \Omega^\pm)}\leq C\|\boldsymbol{v}\|_{H^2(\cup\Omega^\pm)}.
\end{equation}
In fact, the function $\tilde{q}$ can be constructed by setting $\tilde{q}|_{\Omega^\pm}=w^\pm-c$, where  $w^\pm$ satisfy 
\begin{equation*}
w^+=0 ~~\mbox{ and }~~ \Delta w^-=0 \mbox{ in } \Omega^-,\quad w^-|_{\Gamma}=-[\boldsymbol{\sigma}(\mu,\boldsymbol{v},0)\boldsymbol{n}]_\Gamma\cdot\boldsymbol{n},
\end{equation*}
and the constant $c$ is chosen so that $\tilde{q}$ has average zero. Using the regularity results for Laplace’s equation (see \cite{Gilbargbook}), the inequality in \eqref{cons_qprime} follows.
Letting $\hat{q}=q-\tilde{q}$,  we obtain a decomposition
$$
(\boldsymbol{v},q)=(\boldsymbol{v},R(\boldsymbol{v}))+(\boldsymbol{0},\hat{q}).
$$
By construction, we have $[\hat{q}]_\Gamma=0$, and thus
\begin{equation}\label{hat_q_b}
R(\boldsymbol{v})\in L_0^2(\Omega),\qquad \hat{q}\in H^1(\Omega)\cap L_0^2(\Omega).
\end{equation}
Conversely, for any $r\in H^1(\Omega)\cap L_0^2(\Omega)$, it is easy to verify that 
$$(\boldsymbol{v},R(\boldsymbol{v}))+(\boldsymbol{0},r)\in \widetilde{\boldsymbol{H}^2H^1}\cap (\boldsymbol{V}\times Q).$$
We now define the following spaces:
$$
\boldsymbol{X}=\{ \boldsymbol{v}:  (\boldsymbol{v},R(\boldsymbol{v}))\in \widetilde{\boldsymbol{H}^2H^1}\cap (\boldsymbol{V}\times Q) \}, \quad Y=H^1(\Omega)\cap L_0^2(\Omega).
$$
Based on the above derivation, we obtain the following space decomposition:
\begin{equation}\label{decomp_con}
\widetilde{\boldsymbol{H}^2H^1}\cap (\boldsymbol{V}\times Q)=\left \{(\boldsymbol{v},R(\boldsymbol{v})) : \boldsymbol{v} \in \boldsymbol{X}\right\} \oplus (\{\boldsymbol{0}\}\times Y).
\end{equation}

If we choose any $(\boldsymbol{v}, q) \in \widetilde{\boldsymbol{H}^2H^1} \cap (\boldsymbol{V} \times Q)$ as test functions in the weak formulation \eqref{weakform0}, then based on the above decomposition, 
the exact solution $(\boldsymbol{u}, p)$ with $p=R(\boldsymbol{u})+\hat{p}$ and $(\boldsymbol{u}, \hat{p})\in  \boldsymbol{X}\times Y$ satisfies the following relations:
\begin{equation}\label{weakform1}
\begin{aligned}
A(\,(\boldsymbol{u},R(\boldsymbol{u})), (\boldsymbol{v},R(\boldsymbol{v})) \,)+ b(\boldsymbol{v}, \hat{p})&=l(\boldsymbol{v}) \quad &&\forall \,  \boldsymbol{v}\in \boldsymbol{X},\\
b(\boldsymbol{u},\hat{q})&=0\quad &&\forall\,   \hat{q}\in Y.
\end{aligned}
\end{equation}

\section{The Finite  Element  Method}\label{sec_FEM}
\subsection{Unfitted Meshes}
Let $ \{\mathcal{T}_h\}_{h>0}$ be a family of shape-regular  triangulations of the domain $\Omega$, generated independently of the interface $\Gamma$. The mesh size of $\mathcal{T}_h$ is defined by $h=\max_{T\in\mathcal{T}_h}h_T$ with $h_T=\mathrm{diam}(T)$ being the diameter of $T$.  
We denote the set of edges of $\mathcal{T}_h$ by $\mathcal{E}_h$. 
The sets of interior and boundary edges are denoted  by $\mathcal{E}^\circ_h$ and  $\mathcal{E}^\partial_h$, respectively.
We adopt the convention that elements $T$ and edges $e$ are open sets. Then, the sets of interface elements and interface edges are defined by $\mathcal{T}_h^\Gamma =\{T\in\mathcal{T}_h :  T\cap \Gamma\not = \emptyset\}$ and $\mathcal{E}_h^\Gamma=\{ e\in \mathcal{E}_h : e \cap \Gamma\not = \emptyset\}$, respectively. The sets of non-interface elements and non-interface edges are denoted by $\mathcal{T}_h^{non}=\mathcal{T}_h\backslash\mathcal{T}_h^\Gamma$ and $\mathcal{E}_h^{non}=\mathcal{E}_h\backslash \mathcal{E}_h^\Gamma$, respectively.  
For each edge $e \in \mathcal{E}_h$, we denote by $\mathcal{T}_h^e$ the set of all elements in $\mathcal{T}_h$ that share $e$ as an edge.

We assume that $h$ is sufficiently small such that the interface $\Gamma$ intersects the boundary of any element $T$ at most twice, and intersects the closure $\overline{e}$ of any edge $e$ at most once.
We approximate the interface $\Gamma$  by a piecewise linear interface $\Gamma_h$, which consists of all line segments connecting the intersection points of the exact interface with the boundaries of the interface elements.  The approximate interface $\Gamma_h$ divides $\Omega$ into two subdomains: $\Omega_h^+$ and $\Omega_h^-$, with $\partial \Omega_h^-=\Gamma_h$.    
On each interface element $T\in\mathcal{T}_h^\Gamma$,  the discrete interface $\Gamma_h$ divides $T$ into two sub-elements:
$T^+_h=T\cap \Omega^+_h$ and $T^-_h=T\cap \Omega^-_h.$ We denote 
$\Gamma_T=\Gamma\cap T$ and $\Gamma_{h,T}=\Gamma_h\cap T.$
Let $\textbf{n}_h(\boldsymbol{x})$  be the unit normal vector to $\Gamma_h$, oriented outward from $\Omega^-_h$ toward $\Omega^+_h$; see Figure~\ref{fig_inter} for an illustration.
The corresponding unit tangent vectors to $\Gamma_h$ and $\Gamma$ are denoted by $\boldsymbol{t}_h$ and $\boldsymbol{t}$, respectively, and are obtained by a $\pi/2$ clockwise rotation of $\boldsymbol{n}_h$ and $\boldsymbol{n}$.

\begin{figure}[htbp]
\centering
\begin{tikzpicture}[scale=1.6]
\draw [thick] (-1,-1)--(1,-1);
\draw [thick, name path=e2] (-1,-1)--(0.4,0.6);
\draw [thick, name path=e1] (0.4,0.6)--(1,-1);
\draw [thick](-1+1.4/2,-1+1.6/2)--(1-0.6/4,-1+1.6/4);
\node [below left] at (-1,-1) {$A_1$};
\node [below right ]at (1,-1) {$A_2$};
\node [above]at (0.4,0.6) {$A_3$};
\node [above left] at (1-0.6/4,-1+1.6/4) {$D$};
\node [below ] at (-1+1.4/2,-1+1.6/2) {$E$};
\draw [dashed, thick](-1+1.4/2-0.5,-1+1.6/2-0.2) to [out=25,in=180+20] (-1+1.4/2,-1+1.6/2)
to [out=20,in=150] (-1+1.4/2+0.4,-1+1.6/2)
to [out=-30,in=160] (-1+1.4/2+0.8,-1+1.6/2-0.5)
to [out=-20,in=180+30] (1-0.6/4,-1+1.6/4)
to [out=30,in=180+40] (1-0.6/4+0.5,-1+1.6/4+0.4);
\node [above] at (-1+1.4/2-0.5,-1+1.6/2-0.2) {$\Gamma$};
\node at (-1+1.4/2+0.6,-1+1.6/2+0.2) {$T_h^+$};
\node at (-1+1.4/2+0.2,-1+1.6/2-0.5) {$T_h^-$};
\draw [dotted, very thick] (0.85,-0.6)--(0.85+1.15,-0.6+-0.4);
\draw [ ->,thick] (0.85+1.15*0.4,-0.6-0.4*0.4)--(0.85+1.15*0.6,-0.6-0.4*0.6);
\draw [ ->,thick] (0.85+1.15*0.4,-0.6-0.4*0.4)--(0.85+1.15*0.4+0.4*0.2 ,-0.6-0.4*0.4 +1.15*0.2);
\node [above right] at (0.85+1.15*0.4+0.07,-0.6-0.4*0.4+0.1) {$\boldsymbol{n}_h$};
\node [below right] at (0.85+1.15*0.4,-0.6-0.4*0.4) {$\boldsymbol{t}_h$};
\end{tikzpicture}
\caption{An interface element\label{fig_inter}} 
\end{figure}

Denote by $\operatorname{dist}(\boldsymbol{x}, \Gamma)$ the Euclidean distance between a point $\boldsymbol{x}$ and the interface $\Gamma$, and define the tubular neighborhood of $\Gamma$ with thickness $\delta$ as
 $$U(\Gamma,\delta)=\{ \boldsymbol{x}\in\mathbb{R}^2: \mbox{dist}( \boldsymbol{x},\Gamma)< \delta\}.$$
 We define the  signed distance function  $d( \boldsymbol{x})$ by $d( \boldsymbol{x})|_{\Omega^\pm}=\pm \mbox{dist}( \boldsymbol{x},\Gamma)$.
 Under the assumption that $\Gamma$ is $C^2$-smooth, there exists a constant $\delta_0 > 0$ such that $d(\boldsymbol{x}) \in C^2(U(\Gamma, \delta_0))$ (see \cite{foote1984regularity}). 
 We further assume that $h$ is sufficiently small so that $T\subset U(\Gamma,\delta_0)$ for all interface elements $T\in\mathcal{T}_h^\Gamma$. 
We define the region enclosed between the exact and approximate interfaces $\Gamma$ and $\Gamma_h$ as
\begin{equation}\label{def_triangleT}
T^\triangle=(T^-\cap T_h^+)\cup(T^+\cap  T_h^-)\qquad \forall T\in\mathcal{T}_h^\Gamma.
\end{equation}
Since $\Gamma$ is $C^2$-smooth, there exists a constant $C$ depending only on the curvature of $\Gamma$ such that 
\begin{equation}\label{triT_rela}
T^\triangle\subset U(\Gamma, Ch^2)\qquad \forall T\in\mathcal{T}_h^\Gamma.
\end{equation}
The following lemma provides a $\delta$-strip estimate, which will be used to control the error in the region near the interface; see Lemma 2.1 in \cite{Li2010Optimal}.
\begin{lemma}\label{strip}
Let $\delta>0$ be a sufficiently small number.  Then it holds  for any $v\in H^1(\Omega)$ that 
\begin{equation*}
\|v\|_{L^2(U(\Gamma,\delta))}\leq C\sqrt{\delta}\,  \| v\|_{H^1(\Omega)}.
\end{equation*}
\end{lemma}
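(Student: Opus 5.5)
We prove the $\delta$-strip estimate by reducing it to a one-dimensional trace-type inequality along normal lines in tubular coordinates. Since $\Gamma$ is a closed $C^2$ curve and $\delta\le\delta_0$ is small, the map
\[
\Phi:\Gamma\times(-\delta_0,\delta_0)\to U(\Gamma,\delta_0),\qquad \Phi(\boldsymbol{y},s)=\boldsymbol{y}+s\,\boldsymbol{n}(\boldsymbol{y}),
\]
is a $C^1$ diffeomorphism. Its Jacobian is $1-s\kappa(\boldsymbol{y})$, where $\kappa$ is the curvature, so the Jacobian is bounded above and below by positive constants. We also assume $\delta_0<\varepsilon_0$, so that $U(\Gamma,\delta_0)\subset\Omega$. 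By density of smooth functions in $H^1(\Omega)$, it suffices to prove the estimate for $v\in C^1(\overline\Omega)$.

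The first step is a pointwise bound along each normal line. For fixed $\boldsymbol{y}\in\Gamma$ and $|s|<\delta$, the fundamental theorem of calculus gives, for any $t\in(-\delta_0,\delta_0)$,
\[
v(\Phi(\boldsymbol{y},s))=v(\Phi(\boldsymbol{y},t))+\int_t^s \partial_r v(\Phi(\boldsymbol{y},r))\,dr .
\]
We average over $t\in(-\delta_0,\delta_0)$ and apply Cauchy--Schwarz. This yields
\[
|v(\Phi(\boldsymbol{y},s))|^2\le C\int_{-\delta_0}^{\delta_0}\big(|v|^2+|\nabla v|^2\big)(\Phi(\boldsymbol{y},r))\,dr,
\]
with $C$ depending only on $\delta_0$. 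The right-hand side does not depend on $s$.

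The second step integrates this bound. We integrate over $s\in(-\delta,\delta)$, which contributes a factor $2\delta$, and then over $\boldsymbol{y}\in\Gamma$. Changing variables back through $\Phi$, using the Jacobian bounds in both directions, gives
\[
\|v\|^2_{L^2(U(\Gamma,\delta))}\le C\delta\,\|v\|^2_{H^1(U(\Gamma,\delta_0))}\le C\delta\,\|v\|_{H^1(\Omega)}^2 .
\]
Taking square roots gives the claim.

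The only delicate point is establishing the tubular-coordinate diffeomorphism with uniformly bounded Jacobian. This follows from the $C^2$ regularity of $\Gamma$ and the choice of $\delta_0$ already made in the paper for the regularity of $d(\boldsymbol{x})$. An alternative route avoids coordinates: apply the trace inequality $\|v\|_{L^2(\Gamma_s)}\le C\|v\|_{H^1(\Omega)}$ uniformly on the level sets $\Gamma_s=\{d=s\}$ and use the coarea formula with $|\nabla d|=1$.
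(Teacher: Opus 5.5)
Your proof is correct and is essentially the standard argument. The paper gives no proof of its own; it defers to Lemma~2.1 of \cite{Li2010Optimal}, where the estimate comes from the same tubular-neighbourhood reasoning: a trace bound on the parallel curves $\{d=s\}$, uniform in $s$, integrated over $|s|<\delta$. That is the route you list as your alternative, and your main argument proves that uniform trace bound directly, since integrating your step-one estimate over $\boldsymbol{y}\in\Gamma$ gives it. The only bookkeeping worth making explicit is to shrink $\delta_0$ so that $\Phi$ is injective, $\delta_0\max_\Gamma|\kappa|\le 1/2$ and $\delta_0\le\varepsilon_0$, and to take $\delta\le\delta_0$; this is what justifies the two-sided Jacobian bounds and the averaging over $(-\delta_0,\delta_0)$ inside $\Omega$.
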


\subsection{The Crouzeix--Raviart Immersed Finite  Element}
We recall the construction of the Crouzeix--Raviart immersed finite element (CR-IFE), introduced in \cite{jones2021class,2021ji_IFE_stoke}, which is a modification of the classical Crouzeix--Raviart element \cite{crouzeix1973conforming}.
Let $P_k(D)$ be the set of all polynomials of degree less than or equal to $k$ on a domain $D\subset\mathbb{R}^2$. 
Define  $D^\pm=D\cap \Omega^\pm$ and $P_k(\cup D^\pm)=\{v : v|_{D^\pm} \in P_k(D^\pm)\}$. Let $\boldsymbol{P}_k(D)=P_k(D)^2$ and $\boldsymbol{P}_k(\cup D^\pm)=P_k(\cup D^\pm)^2$. 
Given an element $T\in\mathcal{T}_h$, the standard  $CR$-$P_0$ shape function space is $\boldsymbol{P}_1(T)\times P_0(T)$.

On each interface element $T\in\mathcal{T}_h^\Gamma$, we need to modify $\boldsymbol{P}_1(T)\times P_0(T)$ since it cannot  capture the behavior of the exact solution well due to the interface jump conditions. 
According to (\ref{jp_cond1}), (\ref{jp_cond2}) and (\ref{jp_cond3}), the discrete interface jump conditions for all $ (\boldsymbol{v}, q) \in  \boldsymbol{P}_1(\cup T_h^\pm)\times P_0(\cup T_h^\pm)$ are introduced as follows:
\begin{subequations}\label{dis_jp0}
\begin{align}
& [\boldsymbol{\sigma}(\mu_h,\boldsymbol{v},q)\boldsymbol{n}_h]_{\Gamma_{h,T}}=\boldsymbol{0},~~ \label{dis_jp1}\\
&[\boldsymbol{v}]_{\Gamma_{h,T}}=\boldsymbol{0} \mbox{ (or, equivalently, $[\boldsymbol{v}]_{\Gamma_{h,T}}(\boldsymbol{x}_T^*)=\boldsymbol{0}$, $[(\nabla \boldsymbol{v})\boldsymbol{t}_{h}]_{\Gamma_{h,T}}=\boldsymbol{0}$)}, \label{dis_jp2}\\
&[\nabla\cdot \boldsymbol{v}]_{\Gamma_{h,T}}=0,\label{dis_jp3}
\end{align}
\end{subequations}
where $\mu_h|_{\Omega_h^\pm}=\mu^\pm$, $\boldsymbol{x}_T^*$ is a point on $\Gamma_{h,T}$, and $[ \cdot ]_{\Gamma_{h,T}}$ stands for the jump  across $\Gamma_{h,T}$.
The modified $CR$-$P_0$ shape function space is then defined as
\begin{equation*}
\widetilde{\boldsymbol{P}_1P_0}(T)=\{   (\boldsymbol{v}, q)\in  \boldsymbol{P}_1(\cup T_h^\pm)\times P_0(\cup T_h^\pm) :  (\boldsymbol{v}, q) \mbox{  satisfies  (\ref{dis_jp0})} \},
\end{equation*}
where we emphasize that the velocity and pressure components are coupled due to the discrete interface jump condition (\ref{dis_jp1}).

Let $e_{j,T}$, $j\in \{1,2,3\}$, denote the edges of the element $T$, and write $\boldsymbol{v}=(v_1,v_2)^\top$. For the spaces $\widetilde{\boldsymbol{P}_1P_0}(T)$ and $\boldsymbol{P}_1(T)\times P_0(T)$,  the degrees of freedom, denoted by ${\rm DoF}_{k,T}$, $k= 1,2,\cdots,7$, are defined as follows:
\begin{equation*}
\begin{aligned}
{\rm DoF}_{j+3(i-1),T}(\boldsymbol{v},q)&=\frac{1}{|e_{j,T}|}\int_{e_{j,T}}v_i \quad  j=1,2,3,~~ i=1,2,\\
{\rm DoF}_{7,T}(\boldsymbol{v},q)&=\frac{1}{|T|}\int_T q.
\end{aligned}
\end{equation*}
The global CR-IFE space is defined by
\begin{equation*}
\begin{aligned}
\widetilde{\boldsymbol{V}Q}_h^{\rm{IFE}}=\left \{ \frac{}{} (\boldsymbol{v},q) :  \right. &(\boldsymbol{v}, q)|_T\in \boldsymbol{P}_1(T)\times P_0(T) ~~\forall\, T\in\mathcal{T}_h^{non},  \\
 & (\boldsymbol{v}, q)|_T\in \widetilde{\boldsymbol{P}_1P_0}(T)~~\forall\, T\in\mathcal{T}_h^\Gamma,~~\int_{e}[\boldsymbol{v}]_e=\boldsymbol{0} ~~\forall\, e\in\mathcal{E}^\circ_h,\\
 &\left. \int_e\boldsymbol{v}=\boldsymbol{0} ~~\forall\, e\in\mathcal{E}_h^\partial, ~\int_\Omega q=0 \right \},
\end{aligned}
\end{equation*}
where $[\boldsymbol{v}]_e$ denotes the jump of $\boldsymbol{v}$ across the edge $e$.

\subsection{IFE Basis Functions and the Space Decomposition}
The standard CR finite element basis functions on an element $T$ are defined by
$$
(\boldsymbol{\phi}_{k,T}, \varphi_{k,T})\in \boldsymbol{P}_1(T)\times P_0(T),\quad 
{\rm DoF}_{l,T}(\boldsymbol{\phi}_{k,T}, \varphi_{k,T})=\delta_{kl}, \quad \forall\, k, l\in \{1,2,\cdots,7\},
$$
where $\delta_{kl} $ is the Kronecker symbol.
On each interface element $T\in\mathcal{T}_h^\Gamma$, the IFE basis functions are defined analogously by
$$
(\boldsymbol{\phi}_{k,T}^{\rm IFE}, \varphi_{k,T}^{\rm IFE})  \in \widetilde{\boldsymbol{P}_1P_0}(T), \quad  {\rm DoF}_{l,T}(\boldsymbol{\phi}_{k,T}^{\rm IFE}, \varphi_{k,T}^{\rm IFE})=\delta_{kl},  \quad \forall\, k, l\in \{1,2,\cdots,7\}.
$$

Define two functions as follows:
\begin{equation}\label{def_zw}
\begin{aligned}
z_T(\boldsymbol{x}) &=
\begin{cases}
-1 & \text{if } \boldsymbol{x} \in T_h^+, \\
\;\;0 & \text{if } \boldsymbol{x} \in T_h^-,
\end{cases}
\qquad
w_T(\boldsymbol{x}) =
\begin{cases}
\mathrm{dist}(\boldsymbol{x}, \Gamma_{h,T}^{\mathrm{ext}}) & \text{if } \boldsymbol{x} \in T_h^+, \\
\;\;0 & \text{if } \boldsymbol{x} \in T_h^-,
\end{cases}
\end{aligned}
\end{equation}
where $\Gamma_{h,T}^{\mathrm{ext}}$ is the extension of $\Gamma_{h,T}$ to the straight line that contains it.
Define the interpolation operators \( \pi^{\rm CR}_{h,T} : W(T) \rightarrow P_1(T) \) and \( \pi^{0}_{h,T} : L^2(T) \rightarrow P_0(T) \) by
\begin{equation*}
\int_{e_{j,T}} \pi^{\rm CR}_{h,T} v = \int_{e_{j,T}} v \quad \forall\, j \in \{1,2,3\}, 
\qquad 
\int_T \pi^{0}_{h,T} v = \int_T v,
\end{equation*}
where
$
W(T) = \{ v \in L^2(T) :  \int_{e_{j,T}} v \text{ is well defined for all } j=1, 2, 3 \}.
$
Then, we have the following result regarding the unisolvence of the IFE basis functions (see Lemma 4.6 and (4.37) in \cite{2021ji_IFE_stoke}).
\begin{lemma}\label{lem_IFEbasis}
For each \( T \in \mathcal{T}_h^\Gamma \), every function in \( \widetilde{\boldsymbol{P}_1P_0}(T) \) is uniquely determined by the degrees of freedom \( \mathrm{DoF}_{l,T} \), $l=1, 2, \dots, 7$, and the corresponding IFE basis functions can be explicitly constructed as follows:
\begin{equation*}
\begin{aligned}
\boldsymbol{\phi}_{k,T}^{\rm IFE} &=
\begin{cases}
\boldsymbol{\phi}_{k,T} + \displaystyle\frac{\boldsymbol{t}_{h}^\top \boldsymbol{\sigma}(\mu^-/\mu^+ - 1, \boldsymbol{\phi}_{k,T}, 0)\boldsymbol{n}_h}{1 + (\mu^-/\mu^+ - 1)\nabla \pi^{\rm CR}_{h,T} w_T \cdot \boldsymbol{n}_h} (w_T - \pi^{\rm CR}_{h,T} w_T)\boldsymbol{t}_{h}, & k = 1,2, \dots, 6, \\
\boldsymbol{0}, & k = 7,
\end{cases} \\[1.2ex]
\varphi_{k,T}^{\rm IFE} &=
\begin{cases}
\boldsymbol{n}_h^\top \boldsymbol{\sigma}(\mu^- - \mu^+, \boldsymbol{\phi}_{k,T}, 0)\boldsymbol{n}_h (z_T - \pi^{0}_{h,T} z_T), & k = 1, 2, \dots, 6, \\
\varphi_{k,T}, & k = 7.
\end{cases}
\end{aligned}
\end{equation*}
\end{lemma}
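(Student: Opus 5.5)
The plan is to verify directly that the displayed functions lie in $\widetilde{\boldsymbol{P}_1P_0}(T)$ and satisfy ${\rm DoF}_{l,T}(\boldsymbol{\phi}_{k,T}^{\rm IFE},\varphi_{k,T}^{\rm IFE})=\delta_{kl}$. Unisolvence then follows by a dimension count.

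First, $\widetilde{\boldsymbol{P}_1P_0}(T)$ is a linear subspace. Next, the seven constructed pairs have DoF matrix equal to the identity, so they are linearly independent. It therefore suffices to show $\dim\widetilde{\boldsymbol{P}_1P_0}(T)\le 7$, which gives uniqueness. To see this, parametrize $\boldsymbol{P}_1(\cup T_h^\pm)\times P_0(\cup T_h^\pm)$ by $(\boldsymbol{v}^-,q^-)$ together with the jumps. Condition \eqref{dis_jp2} says $\boldsymbol{v}^+-\boldsymbol{v}^-$ vanishes on the line $\Gamma_{h,T}^{\rm ext}$, so $\boldsymbol{v}^+=\boldsymbol{v}^-+\boldsymbol{c}\,\ell(\boldsymbol{x})$, where $\ell=\mathrm{dist}(\cdot,\Gamma^{\rm ext}_{h,T})$ is taken with sign (affine, with $\nabla \ell=\boldsymbol{n}_h$) and $\boldsymbol{c}\in\mathbb{R}^2$. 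Condition \eqref{dis_jp3} forces $\boldsymbol{c}\cdot\boldsymbol{n}_h=0$, so $\boldsymbol{c}=\alpha\boldsymbol{t}_h$. In \eqref{dis_jp1}, the tangential component determines $\alpha$ linearly from $\boldsymbol{v}^-$, provided a certain coefficient $1+(\mu^-/\mu^+-1)(\cdots)$ is nonzero. The normal component then determines $[q]$. Hence the space is parametrized by $(\boldsymbol{v}^-,q^-)$, which has $7$ parameters, and the dimension is $7$.

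For the explicit formulas, set $\boldsymbol{v}=\boldsymbol{\phi}_{k,T}+\beta(w_T-\pi^{\rm CR}_{h,T}w_T)\boldsymbol{t}_h$. Since $\pi^{\rm CR}_{h,T}w_T$ is a global linear function and $w_T$ equals $\ell$ on $T_h^+$ and $0$ on $T_h^-$, the correction has jump $\ell\,\boldsymbol{t}_h$ across $\Gamma_{h,T}$, so \eqref{dis_jp2} holds. Because $\nabla\cdot(\ell\boldsymbol{t}_h)=\boldsymbol{n}_h\cdot\boldsymbol{t}_h=0$, \eqref{dis_jp3} also holds. The edge averages of $w_T-\pi^{\rm CR}_{h,T}w_T$ vanish by the definition of $\pi^{\rm CR}_{h,T}$, so the velocity DoFs are preserved. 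For the tangential stress condition, compute $[\boldsymbol{t}_h^\top 2\mu_h\boldsymbol{\epsilon}(\boldsymbol{v})\boldsymbol{n}_h]$. With $\boldsymbol{\epsilon}(\boldsymbol{t}_h\boldsymbol{n}_h^\top)$ symmetric, the term $\boldsymbol{t}_h^\top\boldsymbol{\epsilon}\boldsymbol{n}_h$ equals $1/2$. Writing $\boldsymbol{v}^\pm$ and $\mu^\pm$ out, the condition becomes a scalar linear equation for $\beta$:
\[
\mu^+\big(\boldsymbol{t}_h^\top 2\boldsymbol{\epsilon}(\boldsymbol{\phi}_{k,T})\boldsymbol{n}_h+\beta(1-\nabla\pi^{\rm CR}_{h,T}w_T\cdot\boldsymbol{n}_h)\big)=\mu^-\big(\boldsymbol{t}_h^\top 2\boldsymbol{\epsilon}(\boldsymbol{\phi}_{k,T})\boldsymbol{n}_h-\beta\nabla\pi^{\rm CR}_{h,T}w_T\cdot\boldsymbol{n}_h\big).
\]
Dividing by $\mu^+$ and solving yields the stated $\beta$. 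Having fixed $\boldsymbol{v}$, the normal stress condition gives the pressure jump $[q]=[2\mu_h\boldsymbol{n}_h^\top\boldsymbol{\epsilon}(\boldsymbol{v})\boldsymbol{n}_h]$. Since the correction contributes nothing to $\boldsymbol{n}_h^\top\boldsymbol{\epsilon}\boldsymbol{n}_h$, this jump equals $-\boldsymbol{n}_h^\top\boldsymbol{\sigma}(\mu^--\mu^+,\boldsymbol{\phi}_{k,T},0)\boldsymbol{n}_h$, with the sign to be checked. The pressure $c\,(z_T-\pi^0_{h,T}z_T)$ has jump $-c$ and zero mean, so DoF$_7$ is preserved. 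For $k=7$, $(\boldsymbol{0},1)$ trivially satisfies all the conditions.

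The main obstacle is showing that the denominator $1+(\mu^-/\mu^+-1)\nabla\pi^{\rm CR}_{h,T}w_T\cdot\boldsymbol{n}_h$ never vanishes, independently of the cut location. I would try to show $0\le\nabla\pi^{\rm CR}_{h,T}w_T\cdot\boldsymbol{n}_h\le 1$. Then the denominator is a convex combination of $1$ and $\mu^-/\mu^+$, hence at least $\min(1,\mu^-/\mu^+)>0$. To prove the bound, write $\pi^{\rm CR}_{h,T}w_T=\sum_j \bar w_j\lambda_j$, where $\bar w_j$ is the average of $w_T$ on the edge opposite vertex $j$. Then compare with $\pi^{\rm CR}_{h,T}\ell=\ell$ and with $\pi^{\rm CR}_{h,T}0=0$: $w_T=\max(\ell,0)$ is convex and monotone in the $\boldsymbol{n}_h$ direction. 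An edge-by-edge analysis of the two cut configurations should give the bound; this is where I expect most of the work.
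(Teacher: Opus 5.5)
Your verification of the displayed functions is correct. The correction $\beta(w_T-\pi^{\rm CR}_{h,T}w_T)\boldsymbol{t}_h$ has jump $\beta\ell\,\boldsymbol{t}_h$, keeps $[\nabla\cdot\boldsymbol{v}]=0$, and has zero edge averages. Your scalar equation for $\beta$ is right, and the sign you left open comes out as stated: $[q]=(\mu^+-\mu^-)\,\boldsymbol{n}_h^\top 2\boldsymbol{\epsilon}(\boldsymbol{\phi}_{k,T})\boldsymbol{n}_h=-c$.

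One correction to the dimension count: in the $(\boldsymbol{v}^-,q^-)$ parametrization the coefficient of $\alpha$ in the tangential condition is simply $\mu^+$. So $\dim\widetilde{\boldsymbol{P}_1P_0}(T)=7$ with no proviso. The denominator enters only when the DoFs are imposed, and there it is decisive. If all DoFs of $(\boldsymbol{v},q)$ vanish, then $\boldsymbol{v}^-=-\alpha(\pi^{\rm CR}_{h,T}w_T)\boldsymbol{t}_h$, and the tangential condition collapses to $\alpha\,(1+(\mu^-/\mu^+-1)g)=0$ with $g=\nabla\pi^{\rm CR}_{h,T}w_T\cdot\boldsymbol{n}_h$. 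So unisolvence is equivalent to the nonvanishing of the denominator, which is precisely the step you have not proved. (The paper gives no proof of its own here; it cites Lemma 4.6 and (4.37) of the earlier CR-IFE paper.)

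That is the genuine gap, and the route you sketch would not close it as written. First, $\pi^{\rm CR}_{h,T}w_T=\sum_j\bar w_j\lambda_j$ is wrong. A linear function's edge average is its midpoint value, so the CR interpolant is $\sum_j\bar w_j(1-2\lambda_j)$. Your formula has normal derivative $-g/2$ and would even give the wrong sign.

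Second, no edge-by-edge case analysis is needed. The missing idea is the commuting identity $\nabla\pi^{\rm CR}_{h,T}v=\frac{1}{|T|}\int_T\nabla v$ for $v\in H^1(T)$. It follows from the divergence theorem, because $\int_e\pi^{\rm CR}_{h,T}v=\int_e v$ on each straight edge; it is the full-gradient version of \eqref{commu_CR}. Since $\nabla w_T=\boldsymbol{n}_h$ on $T_h^+$ and $\boldsymbol{0}$ on $T_h^-$, this gives exactly $g=|T_h^+|/|T|\in[0,1]$. This is how the monotonicity of $w_T$ in the $\boldsymbol{n}_h$ direction that you noticed transfers to the interpolant. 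Hence the denominator equals $\frac{|T_h^-|}{|T|}+\frac{\mu^-}{\mu^+}\frac{|T_h^+|}{|T|}\ge\min\{1,\mu^-/\mu^+\}>0$ for every cut configuration. With this line inserted, your proof is complete.
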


\begin{remark}
From Lemma~\ref{lem_IFEbasis}, we observe that the IFE basis function \( (\boldsymbol{\phi}_{k,T}^{\rm IFE}, \varphi_{k,T}^{\rm IFE}) \) differs from the standard CR finite element basis function \( (\boldsymbol{\phi}_{k,T}, \varphi_{k,T}) \) by correction terms arising from the enforcement of the discrete interface jump conditions. In particular, for the IFE basis functions associated with the degrees of freedom \( \mathrm{DoF}_{l,T} \), \( l = 1, 2, \dots, 6 \), both the velocity and pressure components involve such corrections. In contrast, no correction term is required for the basis function corresponding to the last degree of freedom.
This structure arises from the fact that the original coupled discrete jump condition \eqref{dis_jp1} is actually equivalent to the following  \cite{zhu2024divergence}:
\begin{subequations}\label{jp_equal}
\begin{align}
[(2\mu \boldsymbol{\epsilon}(\boldsymbol{v}) - p\mathbb{I})\boldsymbol{n}_h \cdot \boldsymbol{n}_h]_{\Gamma_{h,T}} &= 0,\label{jp_equal_1}\\
[(2\mu \boldsymbol{\epsilon}(\boldsymbol{v}) - p\mathbb{I})\boldsymbol{n}_h \cdot \boldsymbol{t}_{h}]_{\Gamma_{h,T}} 
= [2\mu \boldsymbol{\epsilon}(\boldsymbol{v})\boldsymbol{n}_h \cdot \boldsymbol{t}_{h}]_{\Gamma_{h,T}} &= 0, \label{jp_equal_2}
\end{align}
\end{subequations}
where the coupling between the velocity and pressure appears only in \eqref{jp_equal_1}.
As a consequence, the velocity component (with $12$ unknown parameters) can be uniquely determined using six interface constraints, namely  \eqref{jp_equal_2}, \eqref{dis_jp2}, and \eqref{dis_jp3}, together with  six degrees of freedom \( \mathrm{DoF}_{k,T} \) for all \( k \in \{1, 2, \cdots, 6\} \), without involving the pressure. The piecewise constant pressure is then obtained using \eqref{jp_equal_1} and the final degree of freedom \( \mathrm{DoF}_{7,T} \).
\end{remark}

According to the explicit expressions of IFE basis functions given in Lemma~\ref{lem_IFEbasis}, we define the linear operator $R_{h,T}$ for each interface element $T\in\mathcal{T}_h^\Gamma$ by
\begin{equation*}
 R_{h,T}(\boldsymbol{v})=\sum_{k=1}^{6}c_k\varphi_{k,T}^{\rm IFE}~ \mbox{ for all }~ \boldsymbol{v}=\sum_{k=1}^{6}c_k\boldsymbol{\phi}_{k,T}^{\rm IFE}.
\end{equation*}
We then define the linear operator $R_h$ by
\begin{equation} \label{def_Rh}
\left(R_h(\boldsymbol{v})\right)|_T= \left\{
\begin{aligned}
&R_{h,T}(\boldsymbol{v}|_T) \quad &&\mbox{ if }  T\in\mathcal{T}_h^\Gamma,\\
&\boldsymbol{0} \quad &&\mbox{ if } T\in\mathcal{T}_h^{non},
\end{aligned}
\qquad \mbox{ for all } (\boldsymbol{v},q)\in \widetilde{\boldsymbol{V}Q}_{h}^{\rm{IFE}}.
\right.
\end{equation}
It is easy to verify that 
\begin{equation}\label{pro_Rh}
R_{h,T}(\boldsymbol{v})\in L^2_0(T)\quad\mbox{ and }\quad R_h(\boldsymbol{v})\in L^2_0(\Omega).
\end{equation}
We now define the following spaces:
$$
\boldsymbol{X}_h^{\rm IFE}=\{ \boldsymbol{v}:  (\boldsymbol{v},R_h(\boldsymbol{v}))\in \widetilde{\boldsymbol{V}Q}_{h}^{\rm{IFE}} \}, \quad
Y_h=\{q \in L_0^2(\Omega) : q|_T\in P_0(T) ~\forall\, T\in \mathcal{T}_h\}.
$$
Similar to the continuous case (\ref{decomp_con}), we obtain the following discrete space decomposition:
\begin{equation}\label{decomp_dis}
\widetilde{\boldsymbol{V}Q}_{h}^{\rm{IFE}}=\left \{(\boldsymbol{v},R_h(\boldsymbol{v})) : \boldsymbol{v} \in \boldsymbol{X}_h^{\rm IFE}\right\} \oplus (\{\boldsymbol{0}\}\times Y_h).
\end{equation}
Let $\boldsymbol{X}_h$ be the standard CR finite element space defined as
\begin{equation*}
\boldsymbol{X}_h =\{ \boldsymbol{v} :  \boldsymbol{v}|_T\in \boldsymbol{P}_1(T)~~\forall\, T\in\mathcal{T}_h,~~\int_{e}[\boldsymbol{v}]_e=\boldsymbol{0} ~~\forall\, e\in\mathcal{E}^\circ_h, ~~\int_{e}\boldsymbol{v}=\boldsymbol{0} ~~\forall\, e\in\mathcal{E}^\partial_h  \},
\end{equation*}
Clearly,  $\boldsymbol{X}_h^{\rm IFE}$ is a modification of $\boldsymbol{X}_h$
, and it coincides with $\boldsymbol{X}_h$ when $\mu^+=\mu^-$.

\subsection{The Pressure-Robust Scheme}
For each edge $e\in \mathcal{E}_h$, we prescribe a unit normal vector $\boldsymbol{n}_e$. 
The orientations of interior edges are arbitrary but fixed, whereas the orientations of boundary edges are chosen to point outward from the domain $\Omega$.
For an edge $e\in \mathcal{E}_h^\circ$, there exist two elements $T_1$ and $T_2$ that share the edge. We assume that $\boldsymbol{n}_e$ is oriented from $T_1$ to $T_2$.
The jump and the average of a function $v$ across the edge $e$ are then defined as 
$[v]_e=v|_{T_1}-v|_{T_2}$ and  $\{v\}_e=\frac{1}{2}(v|_{T_1}+v|_{T_2})$, respectively. For boundary edges $e\in \mathcal{E}_h^\partial$, we define $[v]_e=\{v\}_e=v|_e$. For simplicity of notation, we omit the subscript $e$ when no confusion arises.

Following the idea proposed in \cite{LINKE2014782}, which modifies the right-hand side to enhance pressure robustness, we define the lowest-order Raviart--Thomas (RT) finite element space \cite{10.1007/BFb0064470} as 
\begin{equation*}
\boldsymbol{RT}_0(\mathcal{T}_h) = \left\{ \boldsymbol{v}_h \in H(\text{div}; \Omega) :  \boldsymbol{v}_h|_T= \boldsymbol{a}_T+b_T\boldsymbol{x}~ \mbox{ for all }  T \in \mathcal{T}_h, \boldsymbol{a}_T\in \mathbb{R}^2, b_T\in \mathbb{R} \right\}
\end{equation*}
and define the corresponding Raviart--Thomas interpolation operator $\boldsymbol{\pi}_h^{\rm RT}: \boldsymbol{V}+ \boldsymbol{X}_h^{\rm IFE}\rightarrow \boldsymbol{RT}_0(\mathcal{T}_h)$ by
\begin{equation*}
(\boldsymbol{\pi}_h^{\rm RT}\boldsymbol{v})|_e\cdot \boldsymbol{n}_e=\frac{1}{|e|}\int_e\{\boldsymbol{v}\}\cdot \boldsymbol{n}_e\qquad \forall\, e\in\mathcal{E}_h.
\end{equation*}
We now introduce the discrete bilinear and linear forms as follows:
\begin{equation}\label{def_Ah}
\begin{aligned}
&A_h(\,(\boldsymbol{u}_h, p_h), (\boldsymbol{v}_h, q_h) \,)=a_h(\boldsymbol{u}_h,\boldsymbol{v}_h)+b_h(\boldsymbol{v}_h,p_h)-b_h(\boldsymbol{u}_h,q_h),\\
&a_h(\boldsymbol{u}_h,\boldsymbol{v}_h)=\sum_{T\in\mathcal{T}_h}\int_T2\mu_h \boldsymbol{\epsilon}(\boldsymbol{u}_h):\boldsymbol{\epsilon}(\boldsymbol{v}_h)+\sum_{e\in\mathcal{E}_h}\frac{\mu_{\rm max}}{h_e}\int_e[\boldsymbol{u}_h]\cdot[\boldsymbol{v}_h]\\
&\quad-\sum_{e\in\mathcal{E}_h^\Gamma}\int_e \left(\{ 2\mu_h\boldsymbol{\epsilon}(\boldsymbol{u}_h)\boldsymbol{n}_e\}\cdot[\boldsymbol{v}_h]+\theta\{ 2\mu_h\boldsymbol{\epsilon}(\boldsymbol{v}_h)\boldsymbol{n}_e\}\cdot[\boldsymbol{u}_h]\right) +\sum_{e\in\mathcal{E}_h^\Gamma}\frac{\eta\mu_{\rm max}}{h_e}\int_e[\boldsymbol{u}_h]\cdot[\boldsymbol{v}_h],\\
&b_h(\boldsymbol{v}_h,q_h)=-\sum_{T\in\mathcal{T}_h}\int_T q_h\nabla\cdot\boldsymbol{v}_h +\sum_{e\in\mathcal{E}_h^\Gamma}\int_e\{q_h\}[\boldsymbol{v}_h]\cdot\boldsymbol{n}_e,\\
&l_h(\boldsymbol{v}_h)=\int_\Omega \boldsymbol{f}\cdot \boldsymbol{\pi}_h^{\rm RT}\boldsymbol{v}_h,
\end{aligned}
\end{equation}
where
$
(\boldsymbol{u}_h, p_h), (\boldsymbol{v}_h, q_h) \in  \widetilde{\boldsymbol{H}^2H^1}+\widetilde{\boldsymbol{V}Q}_{h}^{\rm{IFE}}$, $\mu_{\rm max}=\max\{\mu^+,\mu^-\}$,  $\theta=\pm 1$, $\eta\ge 0$, and $h_e$ denotes the diameter of the edge $e$.
When the parameter $\theta=1$, the bilinear form $a_h(\cdot,\cdot)$ is symmetric and the penalty parameter $\eta$ should be sufficiently large to ensure the coercivity.  When $\theta=-1$, the bilinear form $a_h(\cdot,\cdot)$ is non-symmetric, and we can choose an arbitrary $\eta\geq0$ to ensure the coercivity.  
For an explanation of the terms included on edges, we refer the reader to  \cite{2021ji_IFE_stoke,ji2023analysis}.

Based on  \eqref{weakform1}, we formulate the pressure-robust CR-IFE method for the Stokes interface problem as follows: find $(\boldsymbol{u}_h, p_h)$, with $p_h=R_h(\boldsymbol{u}_h)+\hat{p}_h$ and $(\boldsymbol{u}_h, \hat{p}_h)\in  \boldsymbol{X}_h^{\rm IFE}\times Y_h$, such that 
\begin{equation}\label{IFE_method}
\begin{aligned}
A_h(\,(\boldsymbol{u}_h,R_h(\boldsymbol{u}_h)), (\boldsymbol{v}_h,R_h(\boldsymbol{v}_h))\, )+ b_h(\boldsymbol{v}_h, \hat{p}_h)&=l_h(\boldsymbol{v}_h) \quad &&\forall \,  \boldsymbol{v}_h\in \boldsymbol{X}_h^{\rm IFE},\\
b_h(\boldsymbol{u}_h,\hat{q}_h)&=0\quad &&\forall\,   \hat{q}_h\in Y_h.
\end{aligned}
\end{equation}

\begin{remark}\label{remark_bh}
For all $\hat{q}_h\in Y_h$,  the average $\{\hat{q}_h\}$ is constant on each edge $e$. Hence,
$$\int_e\{\hat{q}_h\}[\boldsymbol{v}_h]\cdot\boldsymbol{n}_e=0,\qquad \forall e\, \in\mathcal{E}_h^\Gamma,~\hat{q}_h\in Y_h,~ \boldsymbol{v}_h\in \boldsymbol{X}_h^{\rm IFE}.$$
As a result, the bilinear form $b_h(\cdot, \cdot)$ in the proposed IFE method \eqref{IFE_method} simplifies to
\begin{equation*}
b_h(\boldsymbol{v}_h, \hat{p}_h)=-\sum_{T\in\mathcal{T}_h}\int_T \hat{p}_h\nabla\cdot\boldsymbol{v}_h,\qquad b_h(\boldsymbol{u}_h, \hat{q}_h)=-\sum_{T\in\mathcal{T}_h}\int_T \hat{q}_h\nabla\cdot\boldsymbol{u}_h.
\end{equation*}
Nevertheless, the edge integrals in $b_h(\cdot, \cdot)$ appearing in the bilinear form  $A_h(\cdot,\cdot)$ generally do not vanish, since $R_h(\boldsymbol{u}_h)$ and $R_h(\boldsymbol{v}_h)$ are not constant on interface edges.
\end{remark}

\begin{remark}
The proposed IFE method \eqref{IFE_method} is a modification of the scheme introduced in \cite{2021ji_IFE_stoke}.
In contrast to the earlier approach, the present formulation employs a divergence-conforming velocity reconstruction to modify the right-hand side and eliminates the pressure penalty term $J_h(\cdot,\cdot)$ defined in \cite{2021ji_IFE_stoke}.
We will show in Section~\ref{sec_LBB} that the inf-sup condition remains valid even without the pressure penalty term.
\end{remark}

\section{Properties of the IFE Space}\label{sec_pro}
In this section, we review several key properties of the IFE space established in \cite{2021ji_IFE_stoke}, which will be used in the subsequent analysis.
We begin by recalling a standard extension result (see, e.g., \cite{Gilbargbook}).
Suppose $v^\pm\in H^m(\Omega^\pm)$ with $m>0$. Then there exist extensions $ v_E^\pm \in H^m(\Omega)$ such that
\begin{equation}\label{extension}
v_E^\pm|_{\Omega^\pm}=v^\pm~\mbox{ and }~\|v_E^\pm\|_{H^m(\Omega)}\leq C\|v^\pm\|_{H^m(\Omega^\pm)},
\end{equation}
where the constant $C>0$ depends only on $\Omega^\pm$. 
This extension result also applies componentwise to vector-valued functions. In the sequel, we denote by $\boldsymbol{v}_E^\pm$ the extension of a vector function $\boldsymbol{v}^\pm\in H^m(\Omega^\pm)^2$. 

We also make use of polynomial extensions. For any $v^\pm \in P_k(\cup T_h^\pm)$, we slightly abuse notation by using the same symbol $v^\pm$ to denote its extension to a global polynomial in $P_k(\mathbb{R}^2)$.
The same convention applies to vector- and matrix-valued polynomial functions.

Define the IFE interpolation operator $\boldsymbol{\pi}_{h}^{\rm IFE}: \boldsymbol{V}+ \boldsymbol{X}_h^{\rm IFE} \rightarrow \boldsymbol{X}_h^{\rm IFE}$ and the standard CR interpolation operator $\boldsymbol{\pi}_{h}^{\rm CR}:  \boldsymbol{V}+ \boldsymbol{X}_h^{\rm IFE} \rightarrow \boldsymbol{X}_h$ by
\begin{equation*}
\int_e (\boldsymbol{\pi}_{h}^{\rm IFE}\boldsymbol{v})|_T=\int_e\{\boldsymbol{v}\},\quad \int_e(\boldsymbol{\pi}_{h}^{\rm CR}\boldsymbol{v})|_T=\int_e\{\boldsymbol{v}\},\quad \forall\, e\in\mathcal{E}_h,\quad \forall\, T\in\mathcal{T}_h^e.
\end{equation*}
Let $$P_0(\mathcal{T}_h)=\{q\in L^2(\Omega) : q|_T\in P_0(T)~~ \forall\, T\in\mathcal{T}_h\}.$$
Define the $L^2$-projection $\pi_h^0: L^2(\Omega)\rightarrow P_0(\mathcal{T}_h)$  by
\begin{equation*}
(\pi_h^0q)|_T =\frac{1}{|T|}\int_{T}q \qquad \forall\, T\in\mathcal{T}_h.
\end{equation*}
Obviously, if $q\in Q$, then  $\pi_h^0q\in Y_h$. Moreover, from \eqref{pro_Rh}, we obtain the following important property:
\[
\pi_h^0R_h(\boldsymbol{v}_h)=0\qquad\forall\, \boldsymbol{v}_h\in\boldsymbol{X}_h^{\rm IFE}.
\]

For any $(\boldsymbol{v},q)\in  \widetilde{\boldsymbol{H}^2H^1}\cap (\boldsymbol{V}\times Q)$,  the decomposition \eqref{decomp_dis} indicates that $\boldsymbol{\pi}_{h}^{\rm IFE }(\boldsymbol{v})$ serves as an approximation of the velocity $\boldsymbol{v}$, and 
$R_h(\,\boldsymbol{\pi}_{h}^{\rm IFE }(\boldsymbol{v})\,)+\pi_h^0q$ approximates the pressure $q$. 
The optimal approximation properties of these interpolants are stated in the following theorem (see Theorem 4.14 in \cite{2021ji_IFE_stoke}). 
%
\begin{theorem}\label{lem_interIFE0}
For any $(\boldsymbol{v},q)\in \widetilde{\boldsymbol{H}^2H^1}\cap (\boldsymbol{V}\times Q)$, there exists a positive constant $C$ independent of $h$ and the position of the interface relative to the mesh such that
\begin{align}
&\sum_{T\in\mathcal{T}_h^\Gamma}|\boldsymbol{v}_E^\pm-(\boldsymbol{\pi}_{h}^{\rm IFE }(\boldsymbol{v}))^\pm|^2_{H^m(T)}\leq Ch^{4-2m}(\|\boldsymbol{v}\|^2_{H^2(\cup \Omega^\pm)}+\|q\|^2_{H^1(\cup \Omega^\pm)}),~~m=0,1,\label{ultimate_ve0}\\
&\sum_{T\in\mathcal{T}_h^\Gamma}\|q_E^\pm-(R_h(\,\boldsymbol{\pi}_{h}^{\rm IFE }(\boldsymbol{v})\,)+\pi_h^0q)^\pm\|^2_{L^2(T)}\leq Ch^{2}(\|\boldsymbol{v}\|^2_{H^2(\cup \Omega^\pm)}+\|q\|^2_{H^1(\cup \Omega^\pm)}).\label{ultimate_pr0}
\end{align}
\end{theorem}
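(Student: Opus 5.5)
The plan is the standard one for IFE interpolation estimates. Working on a single interface element $T\in\mathcal{T}_h^\Gamma$, we compare $(\boldsymbol{\pi}_h^{\rm IFE}\boldsymbol{v}, R_h(\boldsymbol{\pi}_h^{\rm IFE}\boldsymbol{v})+\pi_h^0 q)$ with a piecewise-linear/piecewise-constant ``Taylor'' pair built from the extensions $\boldsymbol{v}_E^\pm$, $q_E^\pm$. Summing over $T$ afterwards is routine. The reason we can compare at all is that, by the decomposition \eqref{decomp_dis} and Lemma~\ref{lem_IFEbasis}, the pair $(\boldsymbol{\pi}_h^{\rm IFE}\boldsymbol{v}, R_h(\boldsymbol{\pi}_h^{\rm IFE}\boldsymbol{v})+\pi_h^0q)|_T$ is exactly the element of $\widetilde{\boldsymbol{P}_1P_0}(T)$ whose seven degrees of freedom equal those of $(\boldsymbol{v},q)$. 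The seventh one matches because $\pi^0_{h,T}R_{h,T}=0$. Hence on $T$ the interpolant equals $\sum_{k=1}^7 {\rm DoF}_{k,T}(\boldsymbol{v},q)(\boldsymbol{\phi}^{\rm IFE}_{k,T},\varphi^{\rm IFE}_{k,T})$.

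The first step is to define an auxiliary pair $(\boldsymbol{w}_T,r_T)\in\boldsymbol{P}_1(\cup T_h^\pm)\times P_0(\cup T_h^\pm)$. Fix a point $\boldsymbol{x}^*\in\Gamma_T$, and let $\boldsymbol{w}_T^\pm$ be the first-order Taylor polynomials of $\boldsymbol{v}_E^\pm$ at $\boldsymbol{x}^*$, with $r_T^\pm=q_E^\pm(\boldsymbol{x}^*)$. Since $\boldsymbol{v}_E^\pm\in H^2$ and $q_E^\pm\in H^1$ only, pointwise values are not available. So we will use averaged Taylor polynomials over a ball, or equivalently projections onto $\boldsymbol P_1$, $P_0$ over the patch $\omega_T$, and measure everything with the local norms of the extensions. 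Using the exact interface conditions $[\boldsymbol{\sigma}\boldsymbol n]=0$, $[\boldsymbol v]=0$, $[\nabla\cdot\boldsymbol v]=0$ at $\boldsymbol x^*$, together with $|\boldsymbol n-\boldsymbol n_h|\le Ch$ on $T$ and $\mathrm{dist}(\Gamma_{h,T},\Gamma_T)\le Ch^2$ from \eqref{triT_rela}, we check that $(\boldsymbol{w}_T,r_T)$ violates the discrete jump conditions \eqref{dis_jp0} only by residuals. These residuals are of size $Ch\,(|\boldsymbol{v}_E^\pm|_{W^2}+|q_E^\pm|_{W^1})$ in the appropriate averaged sense, or $Ch^2$ in the zeroth-order velocity jump.

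The second step is a perturbation argument. Let $(\boldsymbol{z},s)\in\widetilde{\boldsymbol{P}_1P_0}(T)$ be the unique element with the same DoFs as $(\boldsymbol w_T,r_T)$; it exists by Lemma~\ref{lem_IFEbasis}. The difference $(\boldsymbol w_T-\boldsymbol z, r_T-s)$ has vanishing DoFs and jump residuals equal to those above. Using the explicit structure of Lemma~\ref{lem_IFEbasis}, the correction functions $(w_T-\pi^{\rm CR}_{h,T}w_T)\boldsymbol t_h$ and $z_T-\pi^0_{h,T}z_T$ have $H^m$ size $h^{1-m}|T|^{1/2}$ and $|T|^{1/2}$ respectively. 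The denominators $1+(\mu^-/\mu^+-1)\nabla\pi^{\rm CR}_{h,T}w_T\cdot\boldsymbol n_h$ are bounded away from zero uniformly in the cut position, which is the content behind the lemma's unisolvence. Together these give the difference bound $|\boldsymbol w_T-\boldsymbol z|_{H^m(T)}\le Ch^{2-m}(\cdots)$ and $\|r_T-s\|_{L^2(T)}\le Ch(\cdots)$, uniformly in the interface location.

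The third step splits the error as $\boldsymbol v_E^\pm-(\boldsymbol\pi_h^{\rm IFE}\boldsymbol v)^\pm=(\boldsymbol v_E^\pm-\boldsymbol w_T^\pm)+(\boldsymbol w_T-\boldsymbol z)^\pm+(\boldsymbol z-\boldsymbol\pi_h^{\rm IFE}\boldsymbol v)^\pm$, and similarly for the pressure. The first term is a Bramble--Hilbert estimate. The last term lies in $\widetilde{\boldsymbol{P}_1P_0}(T)$ with DoFs ${\rm DoF}_{k,T}(\boldsymbol w_T-\boldsymbol v, r_T-q)$. These DoFs are bounded by trace inequalities applied to $\boldsymbol w_T^\pm-\boldsymbol v_E^\pm$ on each edge piece, plus a term from the strip $T^\triangle$ where the wrong polynomial is used. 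Multiplying by the uniform bounds on the IFE basis functions from Lemma~\ref{lem_IFEbasis}, namely $|\boldsymbol\phi^{\rm IFE}_{k,T}|_{H^m(T)}\le Ch^{1-m}$ and $\|\varphi^{\rm IFE}_{k,T}\|_{L^2(T)}\le C$, gives the required powers of $h$. The main obstacle I expect is the residual bounds in the first step and the edge/strip terms here. They involve pointwise-type quantities of $H^2$/$H^1$ functions near $\Gamma$, so after summation over interface elements we need the strip estimate of Lemma~\ref{strip} applied to $\nabla\boldsymbol v_E^\pm$ and $q_E^\pm$ on $U(\Gamma,Ch^2)$ to recover the factor $h^2$ without losing a power. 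This is where the constants must be kept independent of the cut geometry.
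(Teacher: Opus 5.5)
The paper does not prove this theorem. It quotes Theorem~4.14 of the earlier CR--$P_0$ IFE paper, so your sketch has to stand on its own, and most of it does. The identification of $(\boldsymbol{\pi}_h^{\rm IFE}\boldsymbol{v},R_h(\boldsymbol{\pi}_h^{\rm IFE}\boldsymbol{v})+\pi_h^0q)|_T$ with the DoF-interpolant in $\widetilde{\boldsymbol{P}_1P_0}(T)$ is correct. The zero-DoF lifting of jump residuals is uniformly stable: solving it reproduces the denominator of Lemma~\ref{lem_IFEbasis}, and since $\nabla\pi^{\rm CR}_{h,T}w_T=\frac{|T_h^+|}{|T|}\boldsymbol{n}_h$, that denominator is at least $\min\{1,\mu^-/\mu^+\}$. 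Lemma~\ref{strip} on $U(\Gamma,Ch^2)$ is the right tool for the $T^\triangle$ part of the pressure DoF. For the edge DoFs there is no mismatch at all, since $e^\pm=e_h^\pm$ in 2D.

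There is a genuine gap at the step you yourself call the main obstacle: the first-order residuals $\mathcal{R}_T$ of the stress, divergence and tangential-derivative conditions in \eqref{dis_jp0}. After the lifting, a constant residual of gradient type contributes about $h^{1-m}|T|^{1/2}|\mathcal{R}_T|$ to the velocity in $|\cdot|_{H^m(T)}$, and about $|T|^{1/2}|\mathcal{R}_T|$ to the pressure in $L^2(T)$. Optimality therefore requires $\sum_{T\in\mathcal{T}_h^\Gamma}|T|\,|\mathcal{R}_T|^2\le Ch^2(\|\boldsymbol{v}\|^2_{H^2(\cup\Omega^\pm)}+\|q\|^2_{H^1(\cup\Omega^\pm)})$.

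Up to Bramble--Hilbert terms and $O(h)$ terms from $\boldsymbol{n}-\boldsymbol{n}_h$, $|T|^{1/2}|\mathcal{R}_T|$ is controlled by the $L^2$ norm, over the averaging ball, of jump functions such as $\boldsymbol{g}=\boldsymbol{\sigma}(\mu^+,\boldsymbol{v}_E^+,q_E^+)\boldsymbol{n}-\boldsymbol{\sigma}(\mu^-,\boldsymbol{v}_E^-,q_E^-)\boldsymbol{n}$. These are merely $H^1$. Using the interface conditions ``at $\boldsymbol{x}^*$'' and Taylor-expanding presupposes $W^{2,\infty}\times W^{1,\infty}$ regularity.

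Lemma~\ref{strip} cannot close this gap. On $U(\Gamma,Ch^2)$ it does not reach these quantities, which are averages over all of $T$. On the $O(h)$-strip that contains the interface elements it only gives $\sum_T\|\boldsymbol{g}\|^2_{L^2(T)}\le Ch\|\boldsymbol{g}\|^2_{H^1}$, so half an order is lost in both \eqref{ultimate_ve0} and \eqref{ultimate_pr0}.

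The missing idea is that these jump functions \emph{vanish on} $\Gamma$. Extend $\boldsymbol{n}$ by $\nabla d\in C^1(U(\Gamma,\delta_0))$ and $\boldsymbol{t}$ by rotation. Then $\boldsymbol{g}$, $\nabla\cdot\boldsymbol{v}_E^+-\nabla\cdot\boldsymbol{v}_E^-$ and $(\nabla\boldsymbol{v}_E^+-\nabla\boldsymbol{v}_E^-)\boldsymbol{t}$ are $H^1$ near $\Gamma$ with zero trace on $\Gamma$; the last one because $[\boldsymbol{v}]_\Gamma=\boldsymbol{0}$.

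A Friedrichs inequality for such functions supplies the full factor $h$. You can use it globally, as $\|g\|_{L^2(U(\Gamma,\delta))}\le C\delta\|\nabla g\|_{L^2(U(\Gamma,\delta))}$ with $\delta\sim h$, which is a zero-trace version of Lemma~\ref{strip}. Alternatively, use it locally on balls of radius $\sim h$ centred on $\Gamma_T$, where the constant is uniform because $\Gamma$ is nearly straight at that scale.

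For the zeroth-order condition, take $\boldsymbol{x}_T^*$ to be an endpoint of $\Gamma_{h,T}$, which lies on $\Gamma$. Then $[\boldsymbol{v}_E](\boldsymbol{x}_T^*)=\boldsymbol{0}$, and the residual reduces to an $L^\infty$ Bramble--Hilbert term of size $Ch|\cdot|_{H^2}$ on the ball. With these changes your local perturbation argument does yield the theorem.
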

Taking into account the error resulting from the mismatch between $\Gamma$ and $\Gamma_h$, we also have the following result (see Theorem 4.15 in \cite{2021ji_IFE_stoke}).
\begin{theorem}\label{lem_interIFE1}
For any $(\boldsymbol{v},q)\in \widetilde{\boldsymbol{H}^2H^1}\cap (\boldsymbol{V}\times Q)$, there exists a positive constant $C$ independent of $h$ and the position of the interface relative to the mesh such that
\begin{align}
&\sum_{T\in\mathcal{T}_h}|\boldsymbol{v}-\boldsymbol{\pi}_{h}^{\rm IFE }(\boldsymbol{v})|^2_{H^m(T)}\leq Ch^{4-2m}(\|\boldsymbol{v}\|^2_{H^2(\cup \Omega^\pm)}+\|q\|^2_{H^1(\cup \Omega^\pm)}),~~m=0,1,\label{ultimate_ve1}\\
&\|q-(R_h(\,\boldsymbol{\pi}_{h}^{\rm IFE }(\boldsymbol{v})\,)+\pi_h^0q)\|^2_{L^2(\Omega)}\leq Ch^{2}(\|\boldsymbol{v}\|^2_{H^2(\cup \Omega^\pm)}+\|q\|^2_{H^1(\cup \Omega^\pm)}).\label{ultimate_pr1}
\end{align}
\end{theorem}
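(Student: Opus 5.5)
The plan is to obtain Theorem~\ref{lem_interIFE1} from Theorem~\ref{lem_interIFE0}. The only new error comes from the mismatch regions $T^\triangle$ between $\Gamma$ and $\Gamma_h$.

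\textbf{Splitting the sums.} On non-interface elements $T\in\mathcal{T}_h^{non}$, $\boldsymbol{\pi}_h^{\rm IFE}\boldsymbol{v}$ coincides with the standard CR interpolant. Also $R_h(\cdot)|_T=0$ there, and $\boldsymbol{v},q$ are smooth on $T$. Standard CR and $L^2$-projection estimates therefore give the contributions $Ch^{4-2m}|\boldsymbol{v}|^2_{H^2(T)}$ and $Ch^2|q|^2_{H^1(T)}$. One point needs care: the interpolant uses edge averages $\{\boldsymbol{v}\}$, but $\boldsymbol{v}$ is continuous, so these are just edge means.

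On an interface element $T\in\mathcal{T}_h^\Gamma$, I decompose $T=(T\cap\Omega^+\cap T_h^+)\cup(T\cap\Omega^-\cap T_h^-)\cup T^\triangle$.
\begin{itemize}
\item On the first two pieces, $\boldsymbol{v}=\boldsymbol{v}_E^\pm$ and $\boldsymbol{\pi}_h^{\rm IFE}\boldsymbol{v}=(\boldsymbol{\pi}_h^{\rm IFE}\boldsymbol{v})^\pm$. The error is therefore bounded by the quantity already controlled in \eqref{ultimate_ve0}, and similarly for the pressure with \eqref{ultimate_pr0}.
\item On $T^\triangle$, say on $T^-\cap T_h^+$, I write
\[
\boldsymbol{v}-(\boldsymbol{\pi}_h^{\rm IFE}\boldsymbol{v})^+=(\boldsymbol{v}_E^- -\boldsymbol{v}_E^+)+(\boldsymbol{v}_E^+-(\boldsymbol{\pi}_h^{\rm IFE}\boldsymbol{v})^+).
\]
The second term is again covered by \eqref{ultimate_ve0}, since that estimate holds on all of $T$ for polynomial extensions.
\end{itemize}

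\textbf{Key step: the extension jumps on the strip.} It remains to bound $\boldsymbol{v}_E^+-\boldsymbol{v}_E^-$ and $q_E^+-q_E^-$ on $\bigcup_T T^\triangle\subset U(\Gamma,Ch^2)$, using \eqref{triT_rela}.

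For $m=1$ I apply Lemma~\ref{strip} with $\delta=Ch^2$ to $\nabla(\boldsymbol{v}_E^+-\boldsymbol{v}_E^-)\in H^1(\Omega)$. This gives the bound $Ch^2\|\boldsymbol{v}_E^+-\boldsymbol{v}_E^-\|^2_{H^2(\Omega)}$, which is at most $Ch^2\|\boldsymbol{v}\|^2_{H^2(\cup\Omega^\pm)}$ by \eqref{extension}.

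For $m=0$ the strip lemma alone gives only $h^2$, whereas $h^4$ is required. Here I use that $[\boldsymbol{v}]_\Gamma=\boldsymbol{0}$. The difference $\boldsymbol{w}=\boldsymbol{v}_E^+-\boldsymbol{v}_E^-$ vanishes on $\Gamma$, so a one-dimensional Poincaré argument along normal lines through the strip of width $\delta$ gives $\|\boldsymbol{w}\|_{L^2(U(\Gamma,\delta))}\le C\delta\|\nabla\boldsymbol{w}\|_{L^2(U(\Gamma,\delta))}$. Combining this with the strip lemma for $\nabla\boldsymbol{w}$ yields $\|\boldsymbol{w}\|^2_{L^2}\le C\delta^3\|\boldsymbol{w}\|^2_{H^2}\le Ch^6$, which is more than enough.

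For the pressure, $q_E^+-q_E^-\in H^1(\Omega)$. Lemma~\ref{strip} gives $\|q_E^+-q_E^-\|^2_{L^2(U(\Gamma,Ch^2))}\le Ch^2\|q\|^2_{H^1(\cup\Omega^\pm)}$, which matches the target rate $h^2$ exactly. The pressure-side decomposition on $T^\triangle$ is analogous: $q-(R_h(\boldsymbol{\pi}_h^{\rm IFE}\boldsymbol{v})+\pi_h^0q)^+=(q_E^--q_E^+)+(q_E^+-(\cdots)^+)$.

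\textbf{Main obstacle.} I expect the main difficulty to be the $m=0$ velocity estimate. It requires the normal-direction Poincaré inequality on the thin strip, and its justification uses the $C^2$ tubular coordinates given by $d(\boldsymbol{x})$ on $U(\Gamma,\delta_0)$. This step is also unnecessary if one only needs the stated $h^4$ with a crude $\delta$ bound, since $\delta^2\cdot\delta=h^6$.

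Summing all contributions over $T$ then gives \eqref{ultimate_ve1} and \eqref{ultimate_pr1}.
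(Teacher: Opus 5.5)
Your proposal is correct and follows the route the paper indicates. The paper gives no proof of its own: it cites the earlier work, noting that the result follows from Theorem~\ref{lem_interIFE0} once the mismatch between $\Gamma$ and $\Gamma_h$ is accounted for, via standard CR and $L^2$-projection estimates on $\mathcal{T}_h^{non}$, the extension estimates \eqref{ultimate_ve0} and \eqref{ultimate_pr0} on interface elements, and strip bounds for $\boldsymbol{v}_E^+-\boldsymbol{v}_E^-$ and $q_E^+-q_E^-$ on $\bigcup_T T^\triangle\subset U(\Gamma,Ch^2)$. One correction to your closing remark: for $m=0$ the normal-direction Poincar\'e inequality (using $[\boldsymbol{v}]_\Gamma=\boldsymbol{0}$) is the indispensable step, since Lemma~\ref{strip} alone gives only $h^2$, whereas $C\delta^2\|\nabla(\boldsymbol{v}_E^+-\boldsymbol{v}_E^-)\|^2_{L^2(U(\Gamma,\delta))}$ with $\delta=Ch^2$ already yields $h^4$; it is the extra strip-lemma factor $\delta$, not the Poincar\'e step, that is optional.
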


In the above results, $(\boldsymbol{v},q)$ is treated as a coupled vector-valued function, leading to the appearance of the term $\|q\|_{H^1(\cup\Omega^\pm)}$ on the right-hand sides of the approximation estimates.
However, since it has been established that velocity interpolation $\boldsymbol{\pi}_{h}^{\rm IFE }(\boldsymbol{v})$ depends solely on $\boldsymbol{v}$, but is independent of $q$, the presence of $\|q\|_{H^1(\cup\Omega^\pm)}$ in these bounds seems unnecessary.
In what follows, we demonstrate that this term can be removed from the right-hand sides of  \eqref{ultimate_ve0} and \eqref{ultimate_ve1}.
\begin{corollary}
For any $\boldsymbol{v}\in \boldsymbol{X}$, there exists a positive constant $C$ independent of $h$ and the position of the interface relative to the mesh such that for $m=0,1$,
\begin{equation}\label{ultimate_ve2}
\sum_{T\in\mathcal{T}_h^\Gamma}|\boldsymbol{v}_E^\pm-(\boldsymbol{\pi}_{h}^{\rm IFE }(\boldsymbol{v}))^\pm|^2_{H^m(T)}+\sum_{T\in\mathcal{T}_h}|\boldsymbol{v}-\boldsymbol{\pi}_{h}^{\rm IFE }(\boldsymbol{v})|^2_{H^m(T)}\leq Ch^{4-2m}\|\boldsymbol{v}\|^2_{H^2(\cup \Omega^\pm)}.
\end{equation}
\end{corollary}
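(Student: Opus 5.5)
The idea is to apply Theorems~\ref{lem_interIFE0} and \ref{lem_interIFE1} to a carefully chosen pressure partner of $\boldsymbol{v}$. The natural choice is $q=R(\boldsymbol{v})$, the velocity-dependent pressure built in Section~\ref{sec_pre}.

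By the definition of $\boldsymbol{X}$, for any $\boldsymbol{v}\in\boldsymbol{X}$ the pair $(\boldsymbol{v},R(\boldsymbol{v}))$ lies in $\widetilde{\boldsymbol{H}^2H^1}\cap(\boldsymbol{V}\times Q)$. So both theorems apply with $q=R(\boldsymbol{v})$. Adding \eqref{ultimate_ve0} and \eqref{ultimate_ve1} gives, for $m=0,1$,
\begin{equation*}
\sum_{T\in\mathcal{T}_h^\Gamma}|\boldsymbol{v}_E^\pm-(\boldsymbol{\pi}_{h}^{\rm IFE }(\boldsymbol{v}))^\pm|^2_{H^m(T)}+\sum_{T\in\mathcal{T}_h}|\boldsymbol{v}-\boldsymbol{\pi}_{h}^{\rm IFE }(\boldsymbol{v})|^2_{H^m(T)}\leq Ch^{4-2m}\bigl(\|\boldsymbol{v}\|^2_{H^2(\cup \Omega^\pm)}+\|R(\boldsymbol{v})\|^2_{H^1(\cup \Omega^\pm)}\bigr).
\end{equation*}

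The key point is that the left-hand side involves only $\boldsymbol{\pi}_{h}^{\rm IFE}(\boldsymbol{v})$. This interpolant is defined purely through the edge averages of $\boldsymbol{v}$ and takes values in $\boldsymbol{X}_h^{\rm IFE}$. By Lemma~\ref{lem_IFEbasis} and the remark after it, the velocity part of an IFE function is determined without reference to the pressure. Hence the left-hand side is the same for every admissible pressure paired with $\boldsymbol{v}$, and we are free to pick the one with the smallest norm, namely $R(\boldsymbol{v})$.

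It remains to invoke the stability estimate in \eqref{cons_qprime}:
\begin{equation*}
\|R(\boldsymbol{v})\|_{H^1(\cup\Omega^\pm)}\le C\|\boldsymbol{v}\|_{H^2(\cup\Omega^\pm)}.
\end{equation*}
Substituting this into the bound above yields \eqref{ultimate_ve2}. The constant stays independent of $h$ and of the interface position, since the constant in \eqref{cons_qprime} depends only on $\Omega^\pm$ and $\mu^\pm$.

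The only step that needs care is this justification that the left-hand side does not depend on $q$. Beyond that, the argument is a direct combination of the earlier results.
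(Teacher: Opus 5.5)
Your proposal is correct and is essentially the paper's proof: apply Theorems~\ref{lem_interIFE0} and \ref{lem_interIFE1} with $q=R(\boldsymbol{v})$, which is admissible by the definition of $\boldsymbol{X}$, and then absorb $\|R(\boldsymbol{v})\|_{H^1(\cup\Omega^\pm)}$ using \eqref{cons_qprime}. The paragraph on $q$-independence is not needed, since the left-hand side involves only $\boldsymbol{\pi}_h^{\rm IFE}\boldsymbol{v}$, which is defined from $\boldsymbol{v}$ alone; also, $R(\boldsymbol{v})$ need not be the admissible pressure of smallest norm, and only its stability bound is used.
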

\begin{proof}
By definition, we have $(\boldsymbol{v},\tilde{q})\in \widetilde{\boldsymbol{H}^2H^1}\cap (\boldsymbol{V}\times Q)$ with $\tilde{q}=R(\boldsymbol{v})$.  Applying estimates \eqref{ultimate_ve0} and \eqref{ultimate_ve1} with $q$ replaced by $\tilde{q}$,  and using \eqref{cons_qprime},  we obtain the desired result \eqref{ultimate_ve2}.
\end{proof}

Next, we introduce an IFE trace inequality that plays a key role in establishing the coercivity of the bilinear form $a_h(\cdot,\cdot)$.
For each interface element $T\in\mathcal{T}_h^\Gamma$, and for all $\boldsymbol{v}_h \in \boldsymbol{X}_h^{\rm IFE}$,  the condition $[\boldsymbol{v}_h]_{\Gamma_{h,T}}=\boldsymbol{0}$ implies that  $\boldsymbol{v}_h\in H^1(T)^2$. Consequently, the standard trace inequality holds:
\begin{equation}\label{trace_IFE0}
\| \boldsymbol{v}_h\|_{L^2(\partial T)}\leq C(h_T^{-1/2}\|\boldsymbol{v}_h\|_{L^2(T)}+h_T^{1/2} \|\nabla \boldsymbol{v}_h\|_{L^2(T)}) \qquad \forall\, \boldsymbol{v}_h \in \boldsymbol{X}_h^{\rm IFE}.
\end{equation}
However, this standard  trace inequality cannot be directly applied to $\nabla \boldsymbol{v}_h$  since $\boldsymbol{v}_h$ does not necessarily belong to $H^2(T)^2$.  The following lemma establishes a trace inequality for IFE functions (see Lemma 5.2 in \cite{2021ji_IFE_stoke}).
\begin{lemma}\label{trac_IFE}
For any interface element $T\in\mathcal{T}_h^\Gamma$,  there exists a constant $C>0$ independent of $h_T$ and the position of the interface relative to the mesh such that 
\begin{equation}\label{trac_IFE_inequality}
\|\nabla \boldsymbol{v}_h\|_{L^2(\partial T)}\leq Ch_T^{-1/2}\|\nabla \boldsymbol{v}_h\|_{L^2(T)}\quad \forall\, \boldsymbol{v}_h \in\boldsymbol{X}_h^{\rm IFE}.
\end{equation}
\end{lemma}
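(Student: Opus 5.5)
The plan is to work piece by piece on the two sub-elements $T_h^\pm$. On each piece, $\nabla\boldsymbol{v}_h$ is a constant matrix. Write $\boldsymbol{v}_h|_T=\boldsymbol{v}^\pm$ on $T_h^\pm$, and let $\boldsymbol{G}^\pm=\nabla\boldsymbol{v}^\pm$ denote these constant gradients. Then
\[
\|\nabla\boldsymbol{v}_h\|^2_{L^2(\partial T)}=|\boldsymbol{G}^+|^2|\partial T\cap \overline{T_h^+}|+|\boldsymbol{G}^-|^2|\partial T\cap\overline{T_h^-}|\le C h_T\,(|\boldsymbol{G}^+|^2+|\boldsymbol{G}^-|^2),
\]
while $\|\nabla\boldsymbol{v}_h\|^2_{L^2(T)}=|\boldsymbol{G}^+|^2|T_h^+|+|\boldsymbol{G}^-|^2|T_h^-|$. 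The difficulty is that one of $|T_h^\pm|$ can be arbitrarily small relative to $h_T^2$, so the claim reduces to the following: \emph{the gradient on the small piece is controlled by the gradient on the large piece}, namely $|\boldsymbol{G}^+|\le C|\boldsymbol{G}^-|$ and $|\boldsymbol{G}^-|\le C|\boldsymbol{G}^+|$, with $C$ depending only on $\mu^\pm$. Since shape regularity gives $\max(|T_h^+|,|T_h^-|)\ge |T|/2\ge ch_T^2$, this yields $|\boldsymbol{G}^+|^2+|\boldsymbol{G}^-|^2\le Ch_T^{-2}\|\nabla\boldsymbol{v}_h\|^2_{L^2(T)}$, and combining with the display gives \eqref{trac_IFE_inequality}.

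To prove the comparability, I use the discrete jump conditions \eqref{dis_jp2}, \eqref{dis_jp3} and \eqref{jp_equal_2}, which only involve the velocity. Expand the gradients in the orthonormal frame $\{\boldsymbol{n}_h,\boldsymbol{t}_h\}$. Continuity \eqref{dis_jp2} gives $[\boldsymbol{G}\boldsymbol{t}_h]=\boldsymbol{0}$, which fixes two of the four components of the jump. This leaves the unknown vector $\boldsymbol{G}^\pm\boldsymbol{n}_h$, with components $a^\pm=\boldsymbol{n}_h^\top\boldsymbol{G}^\pm\boldsymbol{n}_h$ and $b^\pm=\boldsymbol{t}_h^\top\boldsymbol{G}^\pm\boldsymbol{n}_h$. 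Write $c=\boldsymbol{t}_h^\top\boldsymbol{G}\boldsymbol{t}_h$ and $d=\boldsymbol{n}_h^\top\boldsymbol{G}\boldsymbol{t}_h$; these are common to both sides.

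The two remaining conditions then determine $a^+,b^+$ from the minus side:
\begin{itemize}
\item Condition \eqref{dis_jp3} reads $a^++c=a^-+c$, hence $a^+=a^-$.
\item Condition \eqref{jp_equal_2} reads $\mu^+(b^++d)=\mu^-(b^-+d)$, hence $b^+=\frac{\mu^-}{\mu^+}b^-+\left(\frac{\mu^-}{\mu^+}-1\right)d$.
\end{itemize}
Thus $\boldsymbol{G}^+=M(\boldsymbol{G}^-)$ for a linear map $M$ whose norm depends only on $\mu^-/\mu^+$, since the rotation to the local frame is orthogonal. The map is invertible, with inverse of the same form with the roles of $\mu^\pm$ swapped. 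This gives $|\boldsymbol{G}^+|\le C|\boldsymbol{G}^-|$ and the reverse inequality, with $C$ independent of the cut geometry.

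The main point requiring care, and really the only obstacle, is this uniformity. The constant must not depend on how $\Gamma_h$ cuts $T$, and this holds because the relations above are pointwise algebraic identities in the local frame rather than estimates involving sub-element areas. I would also remark that the bound does not require \eqref{dis_jp1} or the pressure at all, consistent with the observation in the remark after Lemma~\ref{lem_IFEbasis} that the velocity is determined independently of the pressure. Non-interface elements are not needed here, since the lemma concerns $T\in\mathcal{T}_h^\Gamma$ only.
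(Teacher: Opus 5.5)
Your proof is correct and complete. The reduction works: one of $T_h^\pm$ has area at least $|T|/2\ge ch_T^2$, so it suffices to show that $|\boldsymbol{G}^+|$ and $|\boldsymbol{G}^-|$ are comparable. The local-frame computation is also right. The conditions $[(\nabla\boldsymbol{v}_h)\boldsymbol{t}_h]=\boldsymbol{0}$, \eqref{dis_jp3} and \eqref{jp_equal_2} leave only $\boldsymbol{t}_h^\top\boldsymbol{G}\boldsymbol{n}_h$ free to jump, and that jump is controlled by $\mu^-/\mu^+$ alone. The argument in fact gives the stronger pointwise bound $|\boldsymbol{G}^+|+|\boldsymbol{G}^-|\le Ch_T^{-1}\|\nabla\boldsymbol{v}_h\|_{L^2(T)}$.

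The paper does not prove this lemma; it imports it from Lemma~5.2 of \cite{2021ji_IFE_stoke}. The route closest to what the paper does elsewhere is the one in the proof of Lemma~\ref{lem_jumpe}, using the explicit basis of Lemma~\ref{lem_IFEbasis}:
\begin{itemize}
\item Write $\boldsymbol{v}_h|_T=\boldsymbol{\pi}^{\rm CR}_{h,T}\boldsymbol{v}_h+\kappa_T(w_T-\pi^{\rm CR}_{h,T}w_T)\boldsymbol{t}_h$, with $\kappa_T$ linear in $\nabla\boldsymbol{\pi}^{\rm CR}_{h,T}\boldsymbol{v}_h$.
\item Since $\nabla\pi^{\rm CR}_{h,T}w_T=\frac{|T_h^+|}{|T|}\boldsymbol{n}_h$, the denominator of $\kappa_T$ is a convex combination of $1$ and $\mu^-/\mu^+$, so it is uniformly bounded below.
\item Hence $\|\nabla\boldsymbol{v}_h\|_{L^\infty(T)}\le C|\nabla\boldsymbol{\pi}^{\rm CR}_{h,T}\boldsymbol{v}_h|\le Ch_T^{-1}|\boldsymbol{v}_h|_{H^1(T)}$, using $\nabla\boldsymbol{\pi}^{\rm CR}_{h,T}\boldsymbol{v}=\frac{1}{|T|}\int_T\nabla\boldsymbol{v}$ for $\boldsymbol{v}\in H^1(T)^2$.
\end{itemize}
That route depends on unisolvence and the explicit basis formulas, and on the lower bound for that denominator. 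It reproduces your relation, since the correction term only changes $\boldsymbol{t}_h^\top\boldsymbol{G}\boldsymbol{n}_h$. In exchange it gives a bound directly in terms of $\boldsymbol{\pi}^{\rm CR}_{h,T}\boldsymbol{v}_h$, the same quantity the paper reuses for $R_h$.

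Your argument needs only the three velocity-only jump conditions and never touches the basis construction. That makes it shorter and self-contained, and it makes clear why the constant is independent of the cut geometry.
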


We next state the stability of the IFE interpolation operator (see Lemma 5.7 in \cite{2021ji_IFE_stoke}).
\begin{lemma}
For any $\boldsymbol{v}\in \boldsymbol{V}$, there exists a constant $C>0$ independent of $h$ and the position of the interface relative to the mesh such that for all $T\in\mathcal{T}_h^\Gamma$,
\begin{equation}\label{sta_Pi}
|\boldsymbol{\pi}_h^{\rm IFE}\boldsymbol{v}|_{H^1(T)}\leq C|\boldsymbol{v}|_{H^1(T)}.
\end{equation}
\end{lemma}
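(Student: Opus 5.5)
The plan is to write $\boldsymbol{\pi}_h^{\rm IFE}\boldsymbol{v}$ on an interface element $T$ explicitly through the basis of Lemma~\ref{lem_IFEbasis}, compare it with the standard CR interpolant $\boldsymbol{\pi}_{h,T}^{\rm CR}\boldsymbol{v}$, and show that the correction is controlled by $|\boldsymbol{v}|_{H^1(T)}$. The first step is to reduce to the seminorm. Both interpolants reproduce constants: constant vectors satisfy all jump conditions \eqref{dis_jp0} with zero pressure, and by unisolvence they lie in the IFE space and are reproduced. Hence we may replace $\boldsymbol{v}$ by $\boldsymbol{v}-\boldsymbol{c}$, with $\boldsymbol{c}$ the mean of $\boldsymbol{v}$ over $T$, without changing either side.

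Next I would use the formula in Lemma~\ref{lem_IFEbasis} and linearity to get
\[
\boldsymbol{\pi}_h^{\rm IFE}\boldsymbol{v}|_T=\boldsymbol{\pi}^{\rm CR}_{h,T}\boldsymbol{v}+\frac{\boldsymbol{t}_h^\top\boldsymbol{\sigma}(\mu^-/\mu^+-1,\boldsymbol{\pi}^{\rm CR}_{h,T}\boldsymbol{v},0)\boldsymbol{n}_h}{1+(\mu^-/\mu^+-1)\nabla\pi^{\rm CR}_{h,T}w_T\cdot\boldsymbol{n}_h}\,(w_T-\pi^{\rm CR}_{h,T}w_T)\boldsymbol{t}_h .
\]
The first term is bounded by standard CR stability, $|\boldsymbol{\pi}^{\rm CR}_{h,T}\boldsymbol{v}|_{H^1(T)}\le |\boldsymbol{v}|_{H^1(T)}$. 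For the correction term I would check three things.
\begin{itemize}
\item The numerator is bounded by $C|\mu^-/\mu^+-1|\,|\nabla\boldsymbol{\pi}^{\rm CR}_{h,T}\boldsymbol{v}|$, and this gradient is constant on $T$. So the numerator is at most $C|T|^{-1/2}|\boldsymbol{v}|_{H^1(T)}$.
\item Since $w_T$ is Lipschitz with $|\nabla w_T|\le1$, we get $|w_T-\pi^{\rm CR}_{h,T}w_T|_{H^1(T)}\le C|T|^{1/2}$.
\item The denominator is bounded away from zero, uniformly in the interface position.
\end{itemize}
Multiplying these bounds gives the estimate with a constant depending on $\mu^\pm$ and shape regularity.

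The main obstacle is the uniform lower bound on the denominator $1+(\mu^-/\mu^+-1)\nabla\pi^{\rm CR}_{h,T}w_T\cdot\boldsymbol{n}_h$. I would first show that $\nabla\pi^{\rm CR}_{h,T}w_T\cdot\boldsymbol{n}_h\in[0,1]$, by writing $\nabla\pi^{\rm CR}_{h,T}w_T=|T|^{-1}\int_T\nabla w_T$ via Green's formula and edge averages. Since $\nabla w_T$ equals $\boldsymbol{n}_h$ on $T_h^+$ and $\boldsymbol{0}$ on $T_h^-$, this gives $\nabla\pi^{\rm CR}_{h,T}w_T\cdot\boldsymbol{n}_h=|T_h^+|/|T|$. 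The denominator is then a convex combination $(1-\lambda)+\lambda\mu^-/\mu^+$ with $\lambda=|T_h^+|/|T|\in[0,1]$, so it is at least $\min(1,\mu^-/\mu^+)>0$. This is exactly the unisolvence argument of \cite{2021ji_IFE_stoke}, so I could alternatively cite Lemma 5.7 there.
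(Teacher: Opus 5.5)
Your proof is correct. The paper does not prove this lemma itself; it only quotes Lemma 5.7 of \cite{2021ji_IFE_stoke}. Your argument is a valid self-contained replacement that uses nothing beyond the explicit basis of Lemma~\ref{lem_IFEbasis}, which the paper does state.

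The key steps all hold:
\begin{itemize}
\item By linearity of the numerator in $\boldsymbol{\phi}_{k,T}$, $\boldsymbol{\pi}_h^{\rm IFE}\boldsymbol{v}|_T$ equals $\boldsymbol{\pi}^{\rm CR}_{h,T}\boldsymbol{v}$ plus exactly the one correction term you wrote.
\item The numerator is bounded by $2|\mu^-/\mu^+-1|\,|\nabla\boldsymbol{\pi}^{\rm CR}_{h,T}\boldsymbol{v}|$, and this gradient is constant, with $|\nabla\boldsymbol{\pi}^{\rm CR}_{h,T}\boldsymbol{v}|=|T|^{-1/2}|\boldsymbol{\pi}^{\rm CR}_{h,T}\boldsymbol{v}|_{H^1(T)}$.
\item Since $w_T\in H^1(T)$ with gradient $\boldsymbol{n}_h$ on $T_h^+$ and $\boldsymbol{0}$ on $T_h^-$, Green's formula gives $\nabla\pi^{\rm CR}_{h,T}w_T\cdot\boldsymbol{n}_h=|T_h^+|/|T|$. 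The denominator is therefore a convex combination of $1$ and $\mu^-/\mu^+$, hence at least $\min\{1,\mu^-/\mu^+\}$.
\end{itemize}

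Two small remarks.

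\textbf{The mean-subtraction step is unnecessary.} You never use a Poincar\'e or trace inequality. The correction is governed by $\nabla\boldsymbol{\pi}^{\rm CR}_{h,T}\boldsymbol{v}$, which is already stable in the seminorm.

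\textbf{No shape regularity is needed.} Both $|\boldsymbol{\pi}^{\rm CR}_{h,T}\boldsymbol{v}|_{H^1(T)}\le|\boldsymbol{v}|_{H^1(T)}$ and $|w_T-\pi^{\rm CR}_{h,T}w_T|_{H^1(T)}\le|T_h^+|^{1/2}\le|T|^{1/2}$ hold with constant one, because the gradient of the CR interpolant is the elementwise mean of the gradient. The factors $|T|^{-1/2}$ and $|T|^{1/2}$ then cancel, and you obtain the explicit constant $C=1+2|\mu^-/\mu^+-1|/\min\{1,\mu^-/\mu^+\}$. This depends only on the viscosity contrast, not on the mesh or the interface position.
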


We also have the following estimate for the coupled IFE velocity and pressure.
\begin{lemma}\label{lem_jumpe}
Let $R_h$ be defined as in \eqref{def_Rh}.
For each element $T\in\mathcal{T}_h^{\Gamma}$ and each edge $e$ of the element $T$, there exists a constant $C>0$ independent of $h$ and the position of the interface  relative to the mesh such that
\begin{equation}\label{jumpe_tpre1}
\|R_h(\boldsymbol{v}_h)\|^2_{L^2(T)}+h_T\|R_h(\boldsymbol{v}_h)\|^2_{L^2(e)}\leq C|\boldsymbol{v}_h|^2_{H^1(T)}\qquad \forall\, \boldsymbol{v}_h \in\boldsymbol{X}_h^{\rm IFE}.
\end{equation}
\end{lemma}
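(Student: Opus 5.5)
We work directly from the explicit form of $R_{h,T}$ in Lemma~\ref{lem_IFEbasis}. Write $\boldsymbol{v}_h|_T=\sum_{k=1}^6 c_k\boldsymbol{\phi}^{\rm IFE}_{k,T}$ and set $\boldsymbol{v}^{\rm CR}:=\sum_{k=1}^6 c_k\boldsymbol{\phi}_{k,T}=\pi^{\rm CR}_{h,T}\boldsymbol{v}_h\in\boldsymbol{P}_1(T)$. Since the map $\boldsymbol{v}\mapsto \boldsymbol{n}_h^\top\boldsymbol{\sigma}(\mu^--\mu^+,\boldsymbol{v},0)\boldsymbol{n}_h$ is linear and $\boldsymbol{\sigma}(\cdot,\boldsymbol{v},0)$ depends only on $\nabla\boldsymbol{v}$, linearity gives
\begin{equation*}
R_{h,T}(\boldsymbol{v}_h)=2(\mu^--\mu^+)\,\big(\boldsymbol{\epsilon}(\boldsymbol{v}^{\rm CR})\boldsymbol{n}_h\cdot\boldsymbol{n}_h\big)\,(z_T-\pi^0_{h,T}z_T),
\end{equation*}
where $\boldsymbol{\epsilon}(\boldsymbol{v}^{\rm CR})$ is a constant matrix on $T$. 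The function $z_T-\pi^0_{h,T}z_T$ is piecewise constant with values in $[-1,1]$, so $\|z_T-\pi^0_{h,T}z_T\|_{L^\infty(T)}\le 1$. This immediately gives
\begin{equation*}
\|R_h(\boldsymbol{v}_h)\|_{L^2(T)}^2\le C|\nabla\boldsymbol{v}^{\rm CR}|^2|T|,\qquad h_T\|R_h(\boldsymbol{v}_h)\|^2_{L^2(e)}\le C h_T|e|\,|\nabla\boldsymbol{v}^{\rm CR}|^2\le C|T|\,|\nabla \boldsymbol{v}^{\rm CR}|^2,
\end{equation*}
by shape regularity, with $C$ depending only on $\mu^\pm$ and not on the interface location. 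Hence both terms are bounded by $C|\boldsymbol{v}^{\rm CR}|^2_{H^1(T)}$.

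It therefore remains to show $|\pi^{\rm CR}_{h,T}\boldsymbol{v}_h|_{H^1(T)}\le C|\boldsymbol{v}_h|_{H^1(T)}$ uniformly in the interface position. We argue via the divergence theorem. Because $[\boldsymbol{v}_h]_{\Gamma_{h,T}}=\boldsymbol{0}$, we have $\boldsymbol{v}_h\in H^1(T)^2$, as noted before \eqref{trace_IFE0}. Since $\nabla\boldsymbol{v}^{\rm CR}$ is constant and the edge means of $\boldsymbol{v}^{\rm CR}$ and $\boldsymbol{v}_h$ coincide,
\begin{equation*}
\nabla\boldsymbol{v}^{\rm CR}=\frac{1}{|T|}\sum_{j}\int_{e_{j,T}}\boldsymbol{v}^{\rm CR}\otimes\boldsymbol{n}=\frac{1}{|T|}\sum_{j}\int_{e_{j,T}}\boldsymbol{v}_h\otimes\boldsymbol{n}=\frac{1}{|T|}\int_T\nabla\boldsymbol{v}_h .
\end{equation*}
So $\nabla\boldsymbol{v}^{\rm CR}$ is the $L^2(T)$-mean of $\nabla\boldsymbol{v}_h$, and Cauchy--Schwarz yields $|\boldsymbol{v}^{\rm CR}|_{H^1(T)}\le|\boldsymbol{v}_h|_{H^1(T)}$ with constant one.

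Combining the two steps proves \eqref{jumpe_tpre1}. The potential obstacle is the uniformity of the stability of $\pi^{\rm CR}_{h,T}$ as the interface cuts $T$ arbitrarily (for instance, small sub-elements). The averaging identity avoids this difficulty entirely, because it uses only the $H^1(T)$-conformity of IFE velocities on the interface element and never the denominator appearing in Lemma~\ref{lem_IFEbasis}. The remaining point to verify is that $R_{h,T}$ depends on $\boldsymbol{v}_h$ only through the coefficients $c_k$, which are exactly the edge means, so that the representation via $\boldsymbol{v}^{\rm CR}$ is legitimate.
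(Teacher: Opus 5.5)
Your proof is correct and follows the paper's argument: you write $R_h(\boldsymbol{v}_h)|_T=c_T(z_T-\pi^0_{h,T}z_T)$ with $c_T$ depending only on $\nabla\pi^{\rm CR}_{h,T}\boldsymbol{v}_h$, bound the piecewise-constant factor in $L^\infty$, use shape regularity for the edge term, and conclude by stability of the CR interpolant. The only difference is that you prove this stability explicitly, with constant one, via the identity $\nabla\pi^{\rm CR}_{h,T}\boldsymbol{v}_h=|T|^{-1}\int_T\nabla\boldsymbol{v}_h$, which uses $\boldsymbol{v}_h\in H^1(T)^2$; the paper simply cites the stability.
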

\begin{proof}
On each interface element $T\in\mathcal{T}_h^\Gamma$, by Lemma~\ref{lem_IFEbasis} and the definition of $R_h$ in \eqref{def_Rh},  we have
\begin{equation*}
R_h(\boldsymbol{v}_h)=c_T(z_T-\pi_{h,T}^0z_T) \quad\mbox{ with }\quad c_T=\boldsymbol{n}_h^\top\boldsymbol{\sigma}(\mu^--\mu^+,\boldsymbol{\pi}_{h}^{\rm CR}\boldsymbol{v}_h, 0)\boldsymbol{n}_h.
\end{equation*}
From \eqref{def_zw}, it follows that $\|z_T\|_{L^\infty(T)}+\|\pi_{h,T}^0z_T\|_{L^\infty(T)}\leq 2$. 
Therefore, by the shape regularity of the mesh and the stability of the standard CR interpolation operator, we obtain
\begin{equation*}
\|R_h(\boldsymbol{v}_h)\|^2_{L^2(T)}+h_T\|R_h(\boldsymbol{v}_h)\|^2_{L^2(e)}\leq C |\boldsymbol{\pi}_h^{\rm CR}\boldsymbol{v}_h|^2_{H^1(T)} \leq C|\boldsymbol{v}_h|^2_{H^1(T)},
\end{equation*}
which yields the desired result.
\end{proof}

For any $\boldsymbol{v}_h \in \boldsymbol{X}_h^{\rm IFE}$, we define the broken divergence $\nabla_h \cdot \boldsymbol{v}_h$ and broken gradient $\nabla_h \boldsymbol{v}_h$ by
\begin{equation*}
(\nabla_h \cdot \boldsymbol{v}_h)|_T = \nabla \cdot (\boldsymbol{v}_h|_T), \qquad
(\nabla_h \boldsymbol{v}_h)|_T = \nabla (\boldsymbol{v}_h|_T), \qquad \forall\, T \in \mathcal{T}_h.
\end{equation*}
These definitions are well defined, since by \eqref{dis_jp2} we have $\boldsymbol{v}_h|_T \in H^1(T)^2$ for all $T \in \mathcal{T}_h$.

For the standard CR and RT interpolation operators, the following commutative diagram property holds (see, e.g., \cite{LINKE2014782}):
\begin{align}
\nabla_h \cdot \boldsymbol{\pi}_{h}^{\rm CR} \boldsymbol{v} &= \pi_h^0(\nabla_h \cdot \boldsymbol{v}) \qquad \forall\, \boldsymbol{v} \in \boldsymbol{V} + \boldsymbol{X}_h^{\rm IFE},\label{commu_CR}\\
\nabla \cdot \boldsymbol{\pi}_{h}^{\rm RT} \boldsymbol{v} &= \pi_h^0(\nabla_h \cdot \boldsymbol{v}) \qquad \forall\, \boldsymbol{v} \in \boldsymbol{V} + \boldsymbol{X}_h^{\rm IFE}. \label{commu_RT}
\end{align}

We show that this property also extends to the IFE interpolation operator.
\begin{lemma}[Commutative diagram property]\label{lem_commu}
For all $\boldsymbol{v} \in \boldsymbol{V}+ \boldsymbol{X}_h^{\rm IFE}$, we have
\begin{equation}\label{commu0}
\nabla_h \cdot \boldsymbol{\pi}_{h}^{\rm IFE} \boldsymbol{v} = \pi_h^0(\nabla_h \cdot \boldsymbol{v}).
\end{equation}
In particular, for the exact velocity $\boldsymbol{u}$ of problem \eqref{weakform0}, it holds that
\begin{equation}\label{commu1}
\nabla_h \cdot \boldsymbol{\pi}_{h}^{\rm IFE} \boldsymbol{u} = \pi_h^0(\nabla \cdot \boldsymbol{u}) = 0.
\end{equation}
\end{lemma}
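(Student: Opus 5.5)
The identity \eqref{commu0} is elementwise, so I will fix $T\in\mathcal{T}_h$ and show that $\nabla\cdot(\boldsymbol{\pi}_h^{\rm IFE}\boldsymbol{v})|_T$ is the constant $\frac{1}{|T|}\int_T\nabla\cdot\boldsymbol{v}$. The tool is the divergence theorem on $T$. This is legitimate on every element: on non-interface elements $\boldsymbol{\pi}_h^{\rm IFE}\boldsymbol{v}|_T\in\boldsymbol{P}_1(T)$, and on interface elements the condition \eqref{dis_jp2} gives $\boldsymbol{\pi}_h^{\rm IFE}\boldsymbol{v}|_T\in H^1(T)^2$. The argument then has two steps.

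\textbf{Step 1.} I show that the divergence of an IFE function is constant on each element. On $T\in\mathcal{T}_h^{non}$ this is immediate, since the function is linear. On $T\in\mathcal{T}_h^\Gamma$, write $\boldsymbol{w}=\boldsymbol{\pi}_h^{\rm IFE}\boldsymbol{v}|_T$. Each piece $\boldsymbol{w}^\pm$ is linear, so $\nabla\cdot\boldsymbol{w}^\pm$ is a constant on $T_h^\pm$. The discrete jump condition \eqref{dis_jp3}, $[\nabla\cdot\boldsymbol{w}]_{\Gamma_{h,T}}=0$, forces these two constants to agree. Hence $\nabla\cdot\boldsymbol{w}$ is a single constant on $T$.

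\textbf{Step 2.} I identify that constant by integrating. Because $\boldsymbol{w}\in H^1(T)^2$, the divergence theorem gives
\[
|T|\,\nabla\cdot\boldsymbol{w}=\int_T\nabla\cdot\boldsymbol{w}=\sum_{j=1}^3\int_{e_{j,T}}\boldsymbol{w}\cdot\boldsymbol{n}_{T}.
\]
Here $\boldsymbol{n}_T$ is the outward normal, which is constant on each edge. By the definition of $\boldsymbol{\pi}_h^{\rm IFE}$, $\int_e\boldsymbol{w}=\int_e\{\boldsymbol{v}\}$, and the same holds with $\{\boldsymbol{v}\}$ replaced by the trace of $\boldsymbol{v}|_T$. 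This is because $\boldsymbol{v}\in\boldsymbol{V}+\boldsymbol{X}_h^{\rm IFE}$ has $\int_e[\boldsymbol{v}]=\boldsymbol{0}$ on interior edges, and on boundary edges the average is the trace. Therefore the sum equals $\sum_j\int_{e_{j,T}}\boldsymbol{v}|_T\cdot\boldsymbol{n}_T$. Applying the divergence theorem again, now to $\boldsymbol{v}|_T\in H^1(T)^2$, turns this into $\int_T\nabla\cdot\boldsymbol{v}$. That proves \eqref{commu0}.

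For \eqref{commu1}, the exact velocity satisfies $\nabla\cdot\boldsymbol{u}=0$ by \eqref{originalpb2}, so $\pi_h^0(\nabla\cdot\boldsymbol{u})=0$.

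The only delicate point is Step 1 on interface elements. One must check that the IFE function genuinely has a single constant divergence, and this is exactly where \eqref{dis_jp3} enters. By contrast, the edge-moment argument in Step 2 is identical to the proof of \eqref{commu_CR}, and it relies on the $H^1(T)$ regularity supplied by \eqref{dis_jp2}.
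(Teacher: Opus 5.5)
Your proof is correct and follows essentially the same route as the paper: \eqref{dis_jp2} and \eqref{dis_jp3} supply the $H^1(T)$ regularity and the elementwise-constant divergence, and the divergence theorem together with matching of edge moments identifies that constant. The only difference is cosmetic: the paper first checks that $\nabla_h\cdot\boldsymbol{\pi}_h^{\rm IFE}\boldsymbol{v}$ has zero mean and then tests globally against $\hat{q}_h\in Y_h$, whereas you localize to each element, which is slightly more direct and makes the mean-zero step unnecessary.
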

\begin{proof}
By \eqref{dis_jp2} and \eqref{dis_jp3}, we see that $(\boldsymbol{\pi}_{h}^{\rm IFE} \boldsymbol{v})|_T\in H^1(T)^2$  and $\nabla \cdot (\boldsymbol{\pi}_{h}^{\rm IFE} \boldsymbol{v})|_T \in P_0(T)$ for all $T \in \mathcal{T}_h$. Applying the divergence theorem, together with the definition of $\boldsymbol{\pi}_{h}^{\rm IFE}$ and the relation $\int_e \boldsymbol{v} = 0$ for all boundary edges $e\in\mathcal{E}_h^\partial$, we obtain
\[
\int_{\Omega} \nabla_h \cdot \boldsymbol{\pi}_{h}^{\rm IFE} \boldsymbol{v} 
= \sum_{T \in \mathcal{T}_h} \int_T \nabla \cdot \boldsymbol{\pi}_{h}^{\rm IFE} \boldsymbol{v} 
= \sum_{e \in \mathcal{E}_h} \int_e [\boldsymbol{\pi}_{h}^{\rm IFE} \boldsymbol{v}] \cdot \boldsymbol{n}_e = 0.
\]
Therefore, we have
\[
\nabla_h \cdot \boldsymbol{\pi}_{h}^{\rm IFE} \boldsymbol{v} \in Y_h.
\]
The desired result \eqref{commu0} then follows directly from the identity
\[
\int_{\Omega} \nabla_h \cdot (\boldsymbol{\pi}_{h}^{\rm IFE} \boldsymbol{v} - \boldsymbol{v}) \, \hat{q}_h 
= \sum_{T \in \mathcal{T}_h} \int_{\partial T} (\boldsymbol{\pi}_{h}^{\rm IFE} \boldsymbol{v} - \boldsymbol{v}) \cdot \boldsymbol{n}_{\partial T} \, \hat{q}_h = 0 
\qquad \forall \, \hat{q}_h \in Y_h,
\]
where we have used the definition of $\boldsymbol{\pi}_{h}^{\rm IFE}$ and the fact that $\hat{q}_h$ is constant on each element. 
The identity \eqref{commu1} follows directly from \eqref{commu0} since $\nabla \cdot \boldsymbol{u} = 0$. 
\end{proof}

\section{Analysis of the IFE Method}\label{sec_analy}
The pressure-robust analysis is based on reformulating the discrete scheme as an elliptic problem (see \cite{LINKE2014782}). To this end, we introduce the space of discretely divergence-free functions:
\begin{equation}\label{def_Zh}
\boldsymbol{Z}_h = \{ \boldsymbol{v}_h \in \boldsymbol{X}_h^{\rm IFE} : b_h(\boldsymbol{v}_h, \hat{q}_h) = 0 \quad \forall\, \hat{q}_h \in Y_h \}.
\end{equation}
From \eqref{commu1}, it follows that
\begin{equation}\label{zh_pi}
\boldsymbol{\pi}_{h}^{\rm IFE} \boldsymbol{u} \in \boldsymbol{Z}_h.
\end{equation}
Moreover, by the construction of the IFE functions in \eqref{dis_jp0}, we have $\nabla_h \cdot \boldsymbol{v}_h \in Y_h$ for all $\boldsymbol{v}_h \in \boldsymbol{X}_h^{\rm IFE}$. Consequently,
\begin{equation}\label{w_h_eq_0}
\nabla_h \cdot \boldsymbol{v}_h = 0 \qquad \forall\, \boldsymbol{v}_h \in \boldsymbol{Z}_h.
\end{equation}
In addition, by \eqref{commu_RT}, we have
\begin{equation}\label{w_h_RT}
\nabla \cdot \boldsymbol{\pi}_{h}^{\rm RT} \boldsymbol{v}_h = 0 \qquad \forall\, \boldsymbol{v}_h \in \boldsymbol{Z}_h.
\end{equation}
In other words, the discrete velocity \(\boldsymbol{v}_h\) is elementwise divergence-free but is not
\(H(\operatorname{div})\)-conforming.
In contrast, the reconstructed velocity $\boldsymbol{\pi}_{h}^{\rm RT} \boldsymbol{v}_h$ is globally
$H(\operatorname{div})$-conforming and pointwise divergence-free.

The discrete problem~\eqref{IFE_method} can be rewritten in the following elliptic form: seek $\boldsymbol{u}_h\in \boldsymbol{Z}_h$ such that 
\begin{equation}\label{dis_prob_Zh}
A_h(\,(\boldsymbol{u}_h,R_h(\boldsymbol{u}_h)), (\boldsymbol{v}_h,R_h(\boldsymbol{v}_h))\, )=l_h(\boldsymbol{v}_h) \quad \forall \,  \boldsymbol{v}_h\in \boldsymbol{Z}_h.
\end{equation}

To derive the error estimates, we define the following mesh-dependent norms for all $(\boldsymbol{v}, q) \in \widetilde{\boldsymbol{H}^2H^1}\cap (\boldsymbol{V}\times Q)+\widetilde{\boldsymbol{V}Q}_{h}^{\rm{IFE}}$:
\begin{equation}\label{def_norms}
\begin{aligned}
&\|\boldsymbol{v} \|^2_{1,h}=\sum_{T\in\mathcal{T}_h} | \boldsymbol{v}|^2_{H^1(T)},~~\interleave \boldsymbol{v} \interleave^2_{1,h}=\sum_{T\in\mathcal{T}_h} \|\sqrt{2\mu_h} \boldsymbol{\epsilon}(\boldsymbol{v})\|^2_{L^2(T)}+\sum_{e\in\mathcal{E}_h}\frac{\mu_{\rm max}}{h_e}\left\|[\boldsymbol{v}] \right\|_{L^2(e)}^2,\\
&\interleave\boldsymbol{v} \interleave^2_{*,h}=\interleave \boldsymbol{v} \interleave^2_{1,h}+\sum_{e\in\mathcal{E}_h^\Gamma}h_e\|\{ \sqrt{2\mu_h}\boldsymbol{\epsilon}(\boldsymbol{v})\boldsymbol{n}_e\} \|_{L^2(e)}^2 +\sum_{e\in\mathcal{E}_h^\Gamma}\frac{\eta\mu_{\rm max}}{h_e}\|[\boldsymbol{v}] \|_{L^2(e)}^2,\\
&\|q\|^2_{*,pre}=\|q\|_{L^2(\Omega)}^2+\sum_{e\in\mathcal{E}_h^\Gamma}h_e\|\{q\}\|^2_{L^2(e)},\qquad \|(\boldsymbol{v}, q)\|^2_*= \interleave \boldsymbol{v} \interleave^2_{*,h}+\|q\|^2_{*,pre}.
\end{aligned}
\end{equation}
Since $\boldsymbol{v}|_{T}\in H^1(T)^2$ for all $T\in\mathcal{T}_h$, and $\int_e[\boldsymbol{v}]=\boldsymbol{0}$ for all $e\in\mathcal{E}_h$, the Poincar\'e--Friedrichs inequality and the Korn inequality  for piecewise $H^1$ functions imply (see \cite{brenner2003poincare, brenner2004korn})
\begin{equation}\label{korn_ineq}
\|\boldsymbol{v}\|^2_{L^2(\Omega)}\leq  C\sum_{T\in\mathcal{T}_h}|\boldsymbol{v}|^2_{H^1(T)}\leq C\sum_{T\in\mathcal{T}_h}\|\boldsymbol{\epsilon}(\boldsymbol{v})\|^2_{L^2(T)}+C\sum_{e\in\mathcal{E}_h}\frac{1}{h_e}\|[\boldsymbol{v}]\|_{L^2(e)}^2.
\end{equation}
Hence, $\|\cdot\|_{1,h}$ and $\interleave\cdot \interleave_{1,h}$ are indeed norms on the space $\boldsymbol{X}+\boldsymbol{X}_h^{\rm IFE}$ and the following relation holds
\begin{equation}\label{norm_rela}
\|\boldsymbol{v}\|_{1,h}\leq C \interleave\boldsymbol{v} \interleave_{1,h}\qquad \forall \boldsymbol{v}\in\boldsymbol{X}+\boldsymbol{X}_h^{\rm IFE}.
\end{equation}

By the definition \eqref{def_norms}, and using the estimates \eqref{trace_IFE0}, \eqref{trac_IFE_inequality}, \eqref{jumpe_tpre1}, and \eqref{korn_ineq}, we establish the following norm equivalence for IFE functions.
\begin{lemma}
There exist constants $C_1, C_2>0$ independent of $h$ and the position of the interface  relative to the mesh such that
\begin{equation}\label{norm_equal}
C_1 \|\boldsymbol{v}_h\|_{1,h} \leq \|(\boldsymbol{v}_h, R_h(\boldsymbol{v}_h))\|_*\leq C_2\|\boldsymbol{v}_h\|_{1,h}\qquad \forall\, \boldsymbol{v}_h\in \boldsymbol{X}_h^{\rm IFE}.
\end{equation}
\end{lemma}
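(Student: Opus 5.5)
The lower bound is the easy direction. By definition of $\|\cdot\|_*$ we have $\interleave \boldsymbol{v}_h\interleave_{1,h}\le \|(\boldsymbol{v}_h,R_h(\boldsymbol{v}_h))\|_*$. The Korn-type inequality \eqref{korn_ineq}, in the form \eqref{norm_rela}, then gives $\|\boldsymbol{v}_h\|_{1,h}\le C\interleave\boldsymbol{v}_h\interleave_{1,h}$. The constant involves only $\min\{\mu^+,\mu^-\}$, through $\|\boldsymbol{\epsilon}(\boldsymbol{v})\|_{L^2(T)}^2\le (2\mu_{\min})^{-1}\|\sqrt{2\mu_h}\boldsymbol{\epsilon}(\boldsymbol{v})\|_{L^2(T)}^2$, and $\mu_{\max}$. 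This yields $C_1$.

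For the upper bound I would bound each contribution to $\|(\boldsymbol{v}_h,R_h(\boldsymbol{v}_h))\|_*^2$ separately by $C\|\boldsymbol{v}_h\|_{1,h}^2$.

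\textbf{Volume terms.} These are immediate, since $\|\sqrt{2\mu_h}\boldsymbol{\epsilon}(\boldsymbol{v}_h)\|_{L^2(T)}\le C|\boldsymbol{v}_h|_{H^1(T)}$.

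\textbf{Edge-average terms.} For $e\in\mathcal{E}_h^\Gamma$, the term $h_e\|\{\sqrt{2\mu_h}\boldsymbol{\epsilon}(\boldsymbol{v}_h)\boldsymbol{n}_e\}\|^2_{L^2(e)}$ is bounded by $Ch_e\sum_{T\in\mathcal{T}_h^e}\|\nabla\boldsymbol{v}_h|_T\|^2_{L^2(e)}$. On interface elements, Lemma~\ref{trac_IFE} bounds this by $C|\boldsymbol{v}_h|^2_{H^1(T)}$. A neighbor $T\in\mathcal{T}_h^e$ might be a non-interface element; there $\nabla\boldsymbol{v}_h$ is constant, so the standard inverse trace estimate applies.

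\textbf{Pressure terms.} Lemma~\ref{lem_jumpe} gives $\|R_h(\boldsymbol{v}_h)\|_{L^2(T)}^2+h_T\|R_h(\boldsymbol{v}_h)\|_{L^2(e)}^2\le C|\boldsymbol{v}_h|^2_{H^1(T)}$ on interface elements. Since $R_h(\boldsymbol{v}_h)$ vanishes on non-interface elements, summing yields $\|R_h(\boldsymbol{v}_h)\|^2_{*,pre}\le C\|\boldsymbol{v}_h\|^2_{1,h}$, using shape regularity to replace $h_e$ by $h_T$.

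\textbf{Jump terms.} This is the main obstacle: the terms $\frac{\mu_{\max}}{h_e}\|[\boldsymbol{v}_h]\|^2_{L^2(e)}$, and the analogous ones with factor $\eta$, must be controlled by $\|\boldsymbol{v}_h\|_{1,h}^2$. Since $\int_e[\boldsymbol{v}_h]=\boldsymbol{0}$, I would subtract the edge mean $c_e$ of $\boldsymbol{v}_h|_{T}$ from each side and write $\|[\boldsymbol{v}_h]\|_{L^2(e)}\le \sum_{T\in\mathcal{T}_h^e}\|\boldsymbol{v}_h|_T-c_{e}\|_{L^2(e)}$. The difference of the two means is zero, which is exactly what makes this subtraction valid. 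Because $\boldsymbol{v}_h|_T\in H^1(T)^2$, the trace inequality \eqref{trace_IFE0} applies to $\boldsymbol{v}_h-c_e$. Combined with a Poincaré inequality on $T$ for functions with zero mean on an edge, $\|\boldsymbol{v}_h-c_e\|_{L^2(T)}\le Ch_T|\boldsymbol{v}_h|_{H^1(T)}$, this gives $h_e^{-1}\|\boldsymbol{v}_h|_T-c_e\|^2_{L^2(e)}\le C|\boldsymbol{v}_h|^2_{H^1(T)}$. The constant is independent of the interface location because only $H^1(T)$ regularity on a shape-regular triangle is used.

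Summing all contributions over elements and edges, with each element counted a bounded number of times, gives the upper bound with $C_2$ depending only on $\mu^\pm$, $\eta$ and shape regularity.
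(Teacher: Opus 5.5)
Your proposal is correct and takes essentially the same route as the paper. The paper's one-line proof cites exactly the definition of the norms, the trace inequality \eqref{trace_IFE0}, the IFE trace inequality \eqref{trac_IFE_inequality}, the bound \eqref{jumpe_tpre1} for $R_h$, and the Korn inequality \eqref{korn_ineq}, which are the ingredients you apply term by term; your edge-mean subtraction combined with a Poincaré inequality simply makes explicit how \eqref{trace_IFE0} controls the jump terms.
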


Leveraging the optimal approximation properties of the IFE space established in Section~\ref{sec_pro}, we obtain the following estimates for the interpolation error measured in the norms defined in \eqref{def_norms}.
\begin{lemma}\label{ener_app}
For any $\boldsymbol{v}\in \boldsymbol{X}$, there exists a positive constant $C$ independent of $h$ and the position of the interface relative to the mesh such that
\begin{align}
\|\boldsymbol{v}-\boldsymbol{\pi}_h^{\rm IFE} \boldsymbol{v} \|_{*,h}& \leq Ch\| \boldsymbol{v}\|_{H^2(\cup \Omega^\pm)},\label{iter_err_new1}\\
\|R(\boldsymbol{v})-\pi_{h}^0 R(\boldsymbol{v})-R_h(\boldsymbol{\pi}_h^{\rm IFE}\boldsymbol{v})\|_{*,pre}& \leq Ch\| \boldsymbol{v}\|_{H^2(\cup \Omega^\pm)}.\label{iter_err_new2}
\end{align}
\end{lemma}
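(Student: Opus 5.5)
We treat the two estimates separately, in both cases reducing to the interpolation bounds of Theorem~\ref{lem_interIFE0}, Theorem~\ref{lem_interIFE1} and \eqref{ultimate_ve2}. For \eqref{iter_err_new2} we apply these theorems to the pair $(\boldsymbol{v},\tilde q)$ with $\tilde q=R(\boldsymbol{v})$, and then use \eqref{cons_qprime} to replace $\|\tilde q\|_{H^1(\cup\Omega^\pm)}$ by $C\|\boldsymbol{v}\|_{H^2(\cup\Omega^\pm)}$. Throughout we write $\boldsymbol{w}=\boldsymbol{v}-\boldsymbol{\pi}_h^{\rm IFE}\boldsymbol{v}$ and $r=R(\boldsymbol{v})-\pi_h^0R(\boldsymbol{v})-R_h(\boldsymbol{\pi}_h^{\rm IFE}\boldsymbol{v})$. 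The bulk parts are immediate. Indeed, $\sum_T\|\sqrt{2\mu_h}\boldsymbol{\epsilon}(\boldsymbol{w})\|^2_{L^2(T)}\le C\|\boldsymbol{w}\|_{1,h}^2\le Ch^2\|\boldsymbol{v}\|^2_{H^2(\cup\Omega^\pm)}$ by \eqref{ultimate_ve2}, and $\|r\|_{L^2(\Omega)}\le Ch\|\boldsymbol{v}\|_{H^2(\cup\Omega^\pm)}$ by \eqref{ultimate_pr1}.

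The edge terms need a local trace argument. For the jump terms $h_e^{-1}\|[\boldsymbol{w}]\|^2_{L^2(e)}$ we use that $\boldsymbol{v}$ is continuous, so $[\boldsymbol{w}]=-[\boldsymbol{\pi}_h^{\rm IFE}\boldsymbol{v}]$. Both $\boldsymbol{w}|_T$ and $\boldsymbol{v}$ lie in $H^1(T)^2$, so $[\boldsymbol{w}]$ has zero mean on $e$. We bound $\|[\boldsymbol{w}]\|_{L^2(e)}$ by the traces of $\boldsymbol{w}$ from the two neighbouring elements and apply \eqref{trace_IFE0} on each. This gives $h_e^{-1}\|[\boldsymbol{w}]\|^2_{L^2(e)}\le C\sum_{T\in\mathcal{T}_h^e}(h_T^{-2}\|\boldsymbol{w}\|^2_{L^2(T)}+|\boldsymbol{w}|^2_{H^1(T)})$, and summing over edges and using \eqref{ultimate_ve2} with $m=0,1$ yields $O(h^2)$.

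The term $h_e\|\{\boldsymbol{\epsilon}(\boldsymbol{w})\boldsymbol{n}_e\}\|^2_{L^2(e)}$ on interface edges is the delicate one, since $\nabla\boldsymbol{w}$ is not in $H^1(T)$. On $T\in\mathcal{T}_h^\Gamma$ we split $e$ into $e\cap\Omega^\pm$ and compare with $(\boldsymbol{v}_E^\pm-(\boldsymbol{\pi}_h^{\rm IFE}\boldsymbol{v})^\pm)$, whose gradients are genuine $H^1(T)$ functions. The standard trace inequality applied to $\nabla(\boldsymbol{v}_E^\pm-(\boldsymbol{\pi}_h^{\rm IFE}\boldsymbol{v})^\pm)$ on the whole $T$ gives contributions $h_T^{-1}|\cdot|^2_{H^1(T)}+h_T|\boldsymbol{v}_E^\pm|^2_{H^2(T)}$, because the second derivatives of the polynomial parts vanish. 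Summing with \eqref{ultimate_ve0} and \eqref{extension} bounds these by $Ch^2\|\boldsymbol{v}\|^2_{H^2(\cup\Omega^\pm)}$. On $e\cap\Omega^\pm$ the actual $\boldsymbol{w}$ coincides with $\boldsymbol{v}_E^\pm-(\boldsymbol{\pi}_h^{\rm IFE}\boldsymbol{v})^\pm$, except on the tiny pieces between $\Gamma$ and $\Gamma_h$ (of length $O(h^2)$). There the mismatch is $\nabla(\boldsymbol{\pi}_h^{\rm IFE}\boldsymbol{v})^+-\nabla(\boldsymbol{\pi}_h^{\rm IFE}\boldsymbol{v})^-$, a constant bounded via the IFE trace Lemma~\ref{trac_IFE}/inverse estimates by $Ch_T^{-1}|\boldsymbol{\pi}_h^{\rm IFE}\boldsymbol{v}|_{H^1(T)}\le C\|\boldsymbol{v}\|_{W^1_\infty}$-type quantities. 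Rather than use $L^\infty$, we bound the length factor $h^2$ times the squared constant by $Ch^2\cdot h^{-2}\|\nabla \boldsymbol{\pi}_h^{\rm IFE}\boldsymbol{v}\|^2_{L^2(T)}$ restricted to the strip. To close this we use instead the extension difference $\nabla\boldsymbol{v}_E^+-\nabla\boldsymbol{v}_E^-$ together with the interface conditions, and apply Lemma~\ref{strip} with $\delta=Ch^2$ to $\nabla\boldsymbol{v}_E^\pm\in H^1$.

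The pressure edge term $h_e\|\{r\}\|^2_{L^2(e)}$ is treated the same way. We use $r^\pm$ versus $\tilde q_E^\pm-(\pi_h^0\tilde q+R_h(\boldsymbol{\pi}_h^{\rm IFE}\boldsymbol{v}))^\pm$ and the trace inequality $h_T\|\cdot\|^2_{L^2(e)}\le C(\|\cdot\|^2_{L^2(T)}+h_T^2|\cdot|^2_{H^1(T)})$, noting that the piecewise constants have zero gradient, together with \eqref{ultimate_pr0}. The mismatch strip is again handled by Lemma~\ref{strip}, using \eqref{jumpe_tpre1} to control the constant pieces of $R_h$. We expect this strip/mismatch bookkeeping on interface edges to be the main obstacle, since everything else is a direct application of the quoted interpolation estimates.
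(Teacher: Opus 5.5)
Your plan is the paper's plan. You bound the bulk terms by \eqref{ultimate_ve2} and \eqref{ultimate_pr1}. On each interface edge you split $e$ into $e\cap\Omega^\pm$ and replace $\boldsymbol{v}-\boldsymbol{\pi}_h^{\rm IFE}\boldsymbol{v}$ by $\boldsymbol{v}_E^\pm-(\boldsymbol{\pi}_h^{\rm IFE}\boldsymbol{v})^\pm$, and $r$ by $\tilde q_E^\pm-(\pi_h^0\tilde q+R_h(\boldsymbol{\pi}_h^{\rm IFE}\boldsymbol{v}))^\pm$. You then enlarge to all of $e$, apply the standard trace inequality on $T$, and finish with \eqref{ultimate_ve0}, \eqref{ultimate_pr0}, \eqref{extension} and \eqref{cons_qprime}.

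However, the step you single out as the main obstacle rests on a false premise. There are no pieces of $e$ lying ``between $\Gamma$ and $\Gamma_h$''. $\Gamma_h$ is built by joining the points where $\Gamma$ meets $\partial T$, and $\Gamma$ meets each $\overline{e}$ at most once. Hence $\Gamma$ and $\Gamma_h$ cross every interface edge at the same point, and $e\cap\Omega^\pm=e\cap\Omega_h^\pm$ exactly. So on all of $e\cap\Omega^\pm$ you have, with no remainder, $\boldsymbol{v}-\boldsymbol{\pi}_h^{\rm IFE}\boldsymbol{v}=\boldsymbol{v}_E^\pm-(\boldsymbol{\pi}_h^{\rm IFE}\boldsymbol{v})^\pm$ and $r=\tilde q_E^\pm-(\pi_h^0\tilde q+R_h(\boldsymbol{\pi}_h^{\rm IFE}\boldsymbol{v}))^\pm$. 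This is exactly the fact $e^\pm=e_h^\pm$ that the paper's proof invokes, and it also gives $\mu=\mu_h$ on edges. The geometric mismatch $T^\triangle$ lies in the interior of the elements and only affects volume integrals; that is where Lemma~\ref{strip} enters, in Lemma~\ref{lem_cons_0}. The paper's conclusion notes that a mismatch on element faces arises only in three dimensions.

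Your treatment of the supposed edge mismatch would not close as written anyway. Lemma~\ref{strip} bounds area integrals over $U(\Gamma,\delta)$, so applying it with $\delta=Ch^2$ to $\nabla\boldsymbol{v}_E^\pm$ gives no control of an $L^2$ norm on an edge segment. A $W^1_\infty$ bound on $\boldsymbol{v}$ is not available under the $H^2(\cup\Omega^\pm)$ assumption. The inverse-estimate route leaves you with $h\sum_{T\in\mathcal{T}_h^\Gamma}\|\nabla\boldsymbol{\pi}_h^{\rm IFE}\boldsymbol{v}\|^2_{L^2(T)}$. That quantity is $O(h^2)\|\boldsymbol{v}\|^2_{H^2(\cup\Omega^\pm)}$ only after a further strip estimate over the interface elements, with width $O(h)$ rather than $O(h^2)$. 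Likewise, \eqref{jumpe_tpre1} is not needed for the pressure edge term. Replace that paragraph by the observation $e\cap\Omega^\pm=e\cap\Omega_h^\pm$, and your argument coincides with the paper's.
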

\begin{proof}
It suffices to consider interface edges, since the corresponding estimate for non-interface edges is standard.
For each interface edge $e\in\mathcal{E}_h^\Gamma$, let $e^\pm=e\cap \Omega^\pm$ and $e_h^\pm=e\cap \Omega_h^\pm$.  Using the fact that $e^\pm =e_h^\pm$,  we have
\begin{equation*}
\begin{aligned}
\|\{ \sqrt{2\mu_h}\boldsymbol{\epsilon}( \boldsymbol{v}-\boldsymbol{\pi}_h^{\rm IFE} \boldsymbol{v}) \boldsymbol{n}_e\}\|_{L^2(e)}^2 &= \sum_{s=\pm}\|\{ \sqrt{2\mu_h}\boldsymbol{\epsilon}( \boldsymbol{v}_E^s-(\boldsymbol{\pi}_h^{\rm IFE} \boldsymbol{v})^s) \boldsymbol{n}_e\} \|_{L^2(e^s)}^2\\
&\leq \sum_{s=\pm}\|\{ \sqrt{2\mu_h}\boldsymbol{\epsilon}( \boldsymbol{v}_E^s-(\boldsymbol{\pi}_h^{\rm IFE} \boldsymbol{v})^s) \boldsymbol{n}_e\} \|_{L^2(e)}^2,
\end{aligned}
\end{equation*}
which together with the standard trace inequality, the interpolation error estimate \eqref{ultimate_ve2}, and the extension result \eqref{extension}  yields 
\begin{equation*}
\begin{aligned}
\sum_{e\in\mathcal{E}_h^\Gamma}h_e\|\{ \sqrt{2\mu_h}\boldsymbol{\epsilon}( \boldsymbol{v}-\boldsymbol{\pi}_h^{\rm IFE} \boldsymbol{v}) \boldsymbol{n}_e\} \|_{L^2(e)}^2 &\leq C\sum_{T\in\mathcal{T}_h^\Gamma}\sum_{s=\pm}\left(| \boldsymbol{v}_E^s-(\boldsymbol{\pi}_h^{\rm IFE} \boldsymbol{v})^s|^2_{H^1(T)}+h_T^2| \boldsymbol{v}_E^s|^2_{H^2(T)}\right)\\
&\leq Ch^2\|\boldsymbol{v}\|^2_{H^2(\cup \Omega^\pm)}.
\end{aligned}
\end{equation*}
Analogously, we obtain
\begin{equation*}
\begin{aligned}
\sum_{e\in\mathcal{E}_h^\Gamma}\frac{1}{h_e}\|[ \boldsymbol{v}-\boldsymbol{\pi}_h^{\rm IFE} \boldsymbol{v}] \|_{L^2(e)}^2\leq Ch^2\|\boldsymbol{v}\|^2_{H^2(\cup \Omega^\pm)}.
\end{aligned}
\end{equation*}
Combining the above results with \eqref{ultimate_ve2} yields the desired estimate \eqref{iter_err_new1}.

Let $\tilde{q}=R(\boldsymbol{v})$. We have $(\boldsymbol{v},\tilde{q})\in \widetilde{\boldsymbol{H}^2H^1}\cap (\boldsymbol{V}\times Q)$.  Similar to the proof of  \eqref{iter_err_new1}, we apply the standard trace inequality and make use of estimates \eqref{ultimate_pr0} and \eqref{ultimate_pr1} to get
\[
\|R(\boldsymbol{v})-\pi_{h}^0 R(\boldsymbol{v})-R_h(\boldsymbol{\pi}_h^{\rm IFE}\boldsymbol{v})\|_{*,pre}\leq Ch(\|\boldsymbol{v}\|_{H^2(\cup \Omega^\pm)}+\|\tilde{q}\|_{H^1(\cup \Omega^\pm)}),
\]
which together with \eqref{cons_qprime}  yields the desired estimate \eqref{iter_err_new2}.
 \end{proof}

\subsection{Boundedness and Coercivity}
Using \eqref{def_Ah}, \eqref{korn_ineq}, and the Cauchy--Schwarz inequality, we obtain the following boundedness result.
\begin{lemma}\label{lem_bd_Ah}
For all $(\boldsymbol{u}, p)$ and $ (\boldsymbol{v}, q)$ in  $\widetilde{\boldsymbol{H}^2H^1}+\widetilde{\boldsymbol{V}Q}_{h}^{\rm{IFE}}$, it holds 
\begin{equation}\label{bd_Ah}
|A_h(\,(\boldsymbol{u},p),(\boldsymbol{v},q)\,)|\leq C_{A}\|(\boldsymbol{u},p)\|_*\|(\boldsymbol{v},q)\|_*,
\end{equation}
where $C_{A}$ is independent of $h$ and the position of the interface relative to the mesh.
\end{lemma}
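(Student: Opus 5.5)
The bound \eqref{bd_Ah} will follow by estimating $A_h$ term by term against the norms in \eqref{def_norms}. Write
\[
A_h(\,(\boldsymbol{u},p),(\boldsymbol{v},q)\,)=a_h(\boldsymbol{u},\boldsymbol{v})+b_h(\boldsymbol{v},p)-b_h(\boldsymbol{u},q).
\]
We split $a_h$ into four groups: the volume term $\sum_T\int_T 2\mu_h\boldsymbol{\epsilon}(\boldsymbol{u}):\boldsymbol{\epsilon}(\boldsymbol{v})$, the jump penalty over all edges, the consistency and symmetry terms on $\mathcal{E}_h^\Gamma$, and the extra $\eta$-penalty on $\mathcal{E}_h^\Gamma$.

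For the volume term and both penalty terms, we apply the Cauchy--Schwarz inequality elementwise or edgewise and then in the sums. The resulting factors are exactly the pieces $\interleave\boldsymbol{u}\interleave_{1,h}$, $\interleave\boldsymbol{v}\interleave_{1,h}$ and the $\eta$-parts of $\interleave\cdot\interleave_{*,h}$. For the consistency term we insert $h_e^{1/2}h_e^{-1/2}$ and write
\[
\Big|\int_e\{2\mu_h\boldsymbol{\epsilon}(\boldsymbol{u})\boldsymbol{n}_e\}\cdot[\boldsymbol{v}]\Big|\le \sqrt{2\mu_{\max}}\,h_e^{1/2}\|\{\sqrt{2\mu_h}\boldsymbol{\epsilon}(\boldsymbol{u})\boldsymbol{n}_e\}\|_{L^2(e)}\,h_e^{-1/2}\|[\boldsymbol{v}]\|_{L^2(e)}.
\]
The factor $\mu_{\rm max}^{1/2}$ is absorbed into the weight $\mu_{\max}/h_e$ of the jump part of $\interleave\boldsymbol{v}\interleave_{1,h}$, so no inverse or trace inequality is needed. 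The term carrying $\theta$ is handled symmetrically. Summing over $e$ bounds this group by $C\interleave\boldsymbol{u}\interleave_{*,h}\interleave\boldsymbol{v}\interleave_{*,h}$.

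For $b_h(\boldsymbol{v},p)$, the volume part satisfies $|\int_T p\nabla\cdot\boldsymbol{v}|\le \sqrt2\|p\|_{L^2(T)}|\boldsymbol{v}|_{H^1(T)}$. The broken seminorm $\|\boldsymbol{v}\|_{1,h}$ is then controlled by $C\interleave\boldsymbol{v}\interleave_{1,h}$ through the Korn inequality \eqref{korn_ineq}. To apply \eqref{korn_ineq}, we use that $\boldsymbol{v}|_T\in H^1(T)^2$ on every element, both for functions in $\widetilde{\boldsymbol{H}^2H^1}$ (continuous across $\Gamma$) and for IFE functions (by \eqref{dis_jp2}); the mean-zero edge-jump condition is not actually required for the broken Korn inequality with jump terms. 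The edge part of $b_h(\boldsymbol{v},p)$ is bounded by $h_e^{1/2}\|\{p\}\|_{L^2(e)}\,h_e^{-1/2}\|[\boldsymbol{v}]\|_{L^2(e)}$, which is dominated by $\|p\|_{*,pre}\interleave\boldsymbol{v}\interleave_{1,h}$. The same argument treats $b_h(\boldsymbol{u},q)$.

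Collecting all terms and applying the discrete Cauchy--Schwarz inequality gives \eqref{bd_Ah}. The constant depends only on $\mu^\pm$ and on the Korn constant, hence not on $h$ or on the interface position. The only delicate point is the Korn step for sums of continuous and IFE functions. If \eqref{korn_ineq} is not directly available for this space, one can instead include $\|\cdot\|_{1,h}$ in the norm, or use $\nabla\cdot\boldsymbol{v}=\operatorname{tr}\boldsymbol{\epsilon}(\boldsymbol{v})$, which gives
\[
\Big|\int_T p\,\nabla\cdot\boldsymbol{v}\Big|\le \sqrt2\,\|p\|_{L^2(T)}\|\boldsymbol{\epsilon}(\boldsymbol{v})\|_{L^2(T)}.
\]
This second route avoids Korn entirely, and I would use it.
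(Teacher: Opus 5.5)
Your proof is correct and has the same skeleton as the paper's. The paper's whole argument is a term-by-term Cauchy--Schwarz estimate of \eqref{def_Ah} against the pieces of \eqref{def_norms}. It uses \eqref{korn_ineq} only in the volume part of $b_h$, to pass from $\|\nabla_h\cdot\boldsymbol{v}\|_{L^2(\Omega)}\le\sqrt{2}\,\|\boldsymbol{v}\|_{1,h}$ to $C\interleave\boldsymbol{v}\interleave_{1,h}$.

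Your preferred alternative is $|\nabla\cdot\boldsymbol{v}|=|\operatorname{tr}\boldsymbol{\epsilon}(\boldsymbol{v})|\le\sqrt{2}\,|\boldsymbol{\epsilon}(\boldsymbol{v})|$. It gives $\|\nabla_h\cdot\boldsymbol{v}\|_{L^2(\Omega)}\le\mu_{\min}^{-1/2}\interleave\boldsymbol{v}\interleave_{1,h}$ directly. This removes the only non-trivial ingredient, the broken Korn inequality with its shape-regularity-dependent constant, and leaves a constant that depends only on $\mu^\pm$. That is a genuine, if small, simplification.

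One detail deserves to be stated explicitly. On interface edges you factor $\{2\mu_h\boldsymbol{\epsilon}(\boldsymbol{u})\boldsymbol{n}_e\}=\sqrt{2\mu_h}\,\{\sqrt{2\mu_h}\boldsymbol{\epsilon}(\boldsymbol{u})\boldsymbol{n}_e\}$. This needs $\mu_h$ to take the same value from both neighbouring elements at each point of $e$. It does, because $\Gamma_h$ passes through the point where $\Gamma$ cuts $e$, so $e\cap\Omega_h^\pm=e\cap\Omega^\pm$; the paper uses the same fact in Lemma~\ref{lem_cons_0}.
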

Noting that
$$A_h(\,(\boldsymbol{v}_h, R_h(\boldsymbol{v}_h)), (\boldsymbol{v}_h, R_h(\boldsymbol{v}_h))\,)=a_h(\boldsymbol{v}_h,\boldsymbol{v}_h)\qquad \forall\, \boldsymbol{v}_h\in\boldsymbol{X}_h^{\rm IFE},$$
we establish the following coercivity result on the IFE space (see Lemma 5.3 of \cite{2021ji_IFE_stoke}).
\begin{lemma}\label{lem_cor}
There exists a constant $C_{a}>0$ independent of $h$ and the position of the interface relative to the mesh such that 
\begin{equation}\label{inequ_cor}
A_h(\,(\boldsymbol{v}_h, R_h(\boldsymbol{v}_h)), (\boldsymbol{v}_h, R_h(\boldsymbol{v}_h))\,)\geq C_{a}\|\boldsymbol{v}_h\|^2_{1,h}\qquad \forall\, \boldsymbol{v}_h\in\boldsymbol{X}_h^{\rm IFE}
\end{equation}
holds for  $\theta=-1$ with arbitrary $\eta\geq 0$, and for $\theta=1$ provided that $\eta$ is sufficiently large.
\end{lemma}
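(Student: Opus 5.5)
Since $A_h(\,(\boldsymbol{v}_h, R_h(\boldsymbol{v}_h)), (\boldsymbol{v}_h, R_h(\boldsymbol{v}_h))\,)=a_h(\boldsymbol{v}_h,\boldsymbol{v}_h)$ (the $b_h$ terms cancel by antisymmetry), it suffices to bound $a_h(\boldsymbol{v}_h,\boldsymbol{v}_h)$ from below. Expanding the definition \eqref{def_Ah},
\[
a_h(\boldsymbol{v}_h,\boldsymbol{v}_h)=\interleave \boldsymbol{v}_h\interleave^2_{1,h}-(1+\theta)\sum_{e\in\mathcal{E}_h^\Gamma}\int_e\{2\mu_h\boldsymbol{\epsilon}(\boldsymbol{v}_h)\boldsymbol{n}_e\}\cdot[\boldsymbol{v}_h]+\sum_{e\in\mathcal{E}_h^\Gamma}\frac{\eta\mu_{\rm max}}{h_e}\|[\boldsymbol{v}_h]\|^2_{L^2(e)}.
\]
For $\theta=-1$ the consistency terms cancel exactly. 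Hence $a_h(\boldsymbol{v}_h,\boldsymbol{v}_h)\ge \interleave\boldsymbol{v}_h\interleave^2_{1,h}$ for every $\eta\ge 0$, and \eqref{norm_rela} gives $\interleave\boldsymbol{v}_h\interleave^2_{1,h}\ge C\|\boldsymbol{v}_h\|^2_{1,h}$. This case is immediate.

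For $\theta=1$ the cross term $2\sum_e\int_e\{2\mu_h\boldsymbol{\epsilon}(\boldsymbol{v}_h)\boldsymbol{n}_e\}\cdot[\boldsymbol{v}_h]$ must be absorbed. I would use Cauchy--Schwarz and Young on each interface edge:
\[
2\Big|\int_e\{2\mu_h\boldsymbol{\epsilon}(\boldsymbol{v}_h)\boldsymbol{n}_e\}\cdot[\boldsymbol{v}_h]\Big|\le \delta h_e\|\{2\mu_h\boldsymbol{\epsilon}(\boldsymbol{v}_h)\boldsymbol{n}_e\}\|^2_{L^2(e)}+\frac{1}{\delta h_e}\|[\boldsymbol{v}_h]\|^2_{L^2(e)}.
\]
The key step is bounding the flux term by interior quantities. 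The average on $e$ involves traces of $\nabla\boldsymbol{v}_h$ from the (at most two) elements $T\in\mathcal{T}_h^e$, and such elements may be interface elements, where $\boldsymbol{v}_h\notin H^2(T)^2$. Here I invoke the IFE trace inequality of Lemma~\ref{trac_IFE}. On non-interface elements the standard discrete trace inequality does the same job. Together these give
\[
h_e\|\{2\mu_h\boldsymbol{\epsilon}(\boldsymbol{v}_h)\boldsymbol{n}_e\}\|^2_{L^2(e)}\le C_t\sum_{T\in\mathcal{T}_h^e}\|\nabla\boldsymbol{v}_h\|^2_{L^2(T)},
\]
with $C_t$ depending on $\mu^\pm$ and shape regularity but not on the interface position. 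Summing over $e$ and using finite overlap bounds the total by $C_t\|\boldsymbol{v}_h\|^2_{1,h}$, which \eqref{norm_rela} controls by $C\interleave\boldsymbol{v}_h\interleave^2_{1,h}$.

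I then choose $\delta$ small enough that $\delta C_t C\le 1/2$, which absorbs half of $\interleave\boldsymbol{v}_h\interleave^2_{1,h}$. Next I take $\eta\mu_{\rm max}\ge 1/\delta$, so the remaining jump contribution is dominated by the penalty term. The result is $a_h(\boldsymbol{v}_h,\boldsymbol{v}_h)\ge \tfrac12\interleave\boldsymbol{v}_h\interleave^2_{1,h}$, and \eqref{norm_rela} again yields \eqref{inequ_cor}.

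The main obstacle is the uniformity of $C_t$ with respect to the cut position. This is exactly what Lemma~\ref{trac_IFE} supplies, since the IFE gradient is piecewise constant with a jump controlled by the discrete interface conditions. Once that lemma is available, the rest is routine.
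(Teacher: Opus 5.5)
Your proposal is correct and follows essentially the same route as the paper. The paper reduces the form to $a_h(\boldsymbol{v}_h,\boldsymbol{v}_h)$ via the cancellation of the $b_h$ terms, then defers coercivity to Lemma 5.3 of the earlier CR-IFE Stokes paper, where the standard argument is exactly yours: exact cancellation for $\theta=-1$, and for $\theta=1$ Young's inequality, the IFE trace inequality of Lemma~\ref{trac_IFE}, and a sufficiently large $\eta$; you have simply written out those details explicitly.
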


\subsection{Discrete Inf--Sup Condition}\label{sec_LBB}
\begin{lemma}\label{lem_inf_sup}
There exists a constant $\beta > 0$, independent of $h$ and the position of the interface relative to the mesh, such that 
\begin{equation}\label{infsup1}
 \sup_{\boldsymbol{v}_h \in \boldsymbol{X}_h^{\rm IFE}} \frac{b_h(\boldsymbol{v}_h, \hat{q}_h)}{\| \boldsymbol{v}_h \|_{1,h}} \geq \beta \|\hat{q}_h\|_{L^2(\Omega)} \qquad \forall\, \hat{q}_h \in Y_h.
\end{equation}
\end{lemma}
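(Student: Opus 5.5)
The plan is to use the classical Fortin argument, with the IFE interpolation operator $\boldsymbol{\pi}_h^{\rm IFE}$ as the Fortin operator. Fix $\hat{q}_h\in Y_h\subset L_0^2(\Omega)$. By the continuous inf-sup condition for the Stokes problem on $\Omega$ (surjectivity of the divergence onto $L_0^2(\Omega)$), there exists $\boldsymbol{v}\in\boldsymbol{V}$ such that
\[
-\nabla\cdot\boldsymbol{v}=\hat{q}_h \quad\mbox{in }\Omega,\qquad |\boldsymbol{v}|_{H^1(\Omega)}\le C_\Omega\|\hat{q}_h\|_{L^2(\Omega)}.
\]
We then set $\boldsymbol{v}_h=\boldsymbol{\pi}_h^{\rm IFE}\boldsymbol{v}\in\boldsymbol{X}_h^{\rm IFE}$.

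The first step is the identity $b_h(\boldsymbol{v}_h,\hat{q}_h)=\|\hat{q}_h\|^2_{L^2(\Omega)}$. By Remark~\ref{remark_bh}, the edge terms of $b_h$ vanish for $\hat{q}_h\in Y_h$, so $b_h(\boldsymbol{v}_h,\hat{q}_h)=-\int_\Omega \hat{q}_h\nabla_h\cdot\boldsymbol{\pi}_h^{\rm IFE}\boldsymbol{v}$. The commutative diagram property \eqref{commu0} gives $\nabla_h\cdot\boldsymbol{\pi}_h^{\rm IFE}\boldsymbol{v}=\pi_h^0(\nabla\cdot\boldsymbol{v})=-\pi_h^0\hat{q}_h=-\hat{q}_h$. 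Hence $b_h(\boldsymbol{v}_h,\hat{q}_h)=\|\hat{q}_h\|_{L^2(\Omega)}^2$ exactly.

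The second step is the stability bound $\|\boldsymbol{v}_h\|_{1,h}\le C|\boldsymbol{v}|_{H^1(\Omega)}$. We treat the two kinds of elements separately:
\begin{itemize}
\item On interface elements $T\in\mathcal{T}_h^\Gamma$, we use \eqref{sta_Pi}: $|\boldsymbol{\pi}_h^{\rm IFE}\boldsymbol{v}|_{H^1(T)}\le C|\boldsymbol{v}|_{H^1(T)}$.
\item On non-interface elements, $\boldsymbol{\pi}_h^{\rm IFE}\boldsymbol{v}|_T=\boldsymbol{\pi}_h^{\rm CR}\boldsymbol{v}|_T$, because both interpolants are determined by the same edge averages and the local space is $\boldsymbol{P}_1(T)$ there. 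The standard local stability $|\boldsymbol{\pi}_h^{\rm CR}\boldsymbol{v}|_{H^1(T)}\le |\boldsymbol{v}|_{H^1(T)}$ then applies; it follows from integration by parts, since $\nabla\boldsymbol{\pi}_h^{\rm CR}\boldsymbol{v}$ is the $L^2(T)$-projection of $\nabla\boldsymbol{v}$ onto constants.
\end{itemize}
Summing over all elements gives $\|\boldsymbol{v}_h\|_{1,h}\le C|\boldsymbol{v}|_{H^1(\Omega)}\le CC_\Omega\|\hat{q}_h\|_{L^2(\Omega)}$.

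Combining the two steps, the supremum in \eqref{infsup1} is at least $\|\hat{q}_h\|^2_{L^2(\Omega)}/(CC_\Omega\|\hat{q}_h\|_{L^2(\Omega)})$, so \eqref{infsup1} holds with $\beta=(CC_\Omega)^{-1}$.

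The only delicate point is that the stability constant must be independent of the interface position. This is exactly what \eqref{sta_Pi} supplies, so the argument reduces to quoting it. The commutation identity for $\boldsymbol{\pi}_h^{\rm IFE}$ (Lemma~\ref{lem_commu}) is what removes the need for the pressure penalty term used in earlier formulations. It is also essential that $\hat{q}_h$ is piecewise constant rather than containing an $R_h$ component: this is what makes the edge terms of $b_h$ vanish and gives $\pi_h^0\hat{q}_h=\hat{q}_h$.
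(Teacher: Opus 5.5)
Your proof is correct and matches the paper's: it also uses $\boldsymbol{\pi}_h^{\rm IFE}$ as a Fortin operator. There, Remark~\ref{remark_bh} and Lemma~\ref{lem_commu} give $b_h(\boldsymbol{\pi}_h^{\rm IFE}\boldsymbol{v}-\boldsymbol{v},\hat{q}_h)=0$, and \eqref{sta_Pi} together with standard CR stability bounds the $\|\cdot\|_{1,h}$ norm, before the abstract Fortin lemma is invoked. You carry out that lemma explicitly by choosing $\boldsymbol{v}$ with $-\nabla\cdot\boldsymbol{v}=\hat{q}_h$, which is equivalent.
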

\begin{proof}
We employ the Fortin argument to establish the discrete inf-sup condition.
In view of Remark~\ref{remark_bh} and Lemma~\ref{lem_commu}, the IFE interpolation operator $\boldsymbol{\pi}_{h}^{\rm IFE} : \boldsymbol{V} \rightarrow \boldsymbol{X}_h^{\rm IFE}$ satisfies
\[
b_h(\boldsymbol{\pi}_{h}^{\rm IFE} \boldsymbol{v} - \boldsymbol{v}, \hat{q}_h) = 0 \qquad \forall\, \hat{q}_h \in Y_h.
\]
Moreover, from the stability estimate \eqref{sta_Pi} for interface elements and the standard stability of the CR interpolation operator on non-interface elements, we have 
\[
\|\boldsymbol{\pi}_h^{\rm IFE} \boldsymbol{v}\|_{1,h} \leq C \|\boldsymbol{v}\|_{1,h} \qquad \forall\, \boldsymbol{v} \in \boldsymbol{V},
\]
which implies that $\boldsymbol{\pi}_h^{\rm IFE}$ is a Fortin operator.
Furthermore, since $b_h(\boldsymbol{v}, q) = b(\boldsymbol{v}, q)$ for all $(\boldsymbol{v}, q) \in \boldsymbol{V} \times Q$, and $Y_h \subset Q$, the continuous inf-sup condition applies.  
Therefore, the desired discrete inf-sup condition \eqref{infsup1} follows from the Fortin lemma (see, e.g., \cite{Brezzi}).
\end{proof}

As a consequence of \eqref{bd_Ah}-\eqref{infsup1}, the discrete problem ~\eqref{IFE_method} is well-posed (see, e.g., \cite{Brezzi}).

\subsection{Consistency}
In this section, we analyze the consistency error, which primarily arises from the interface approximation and the variational crime introduced by the velocity reconstruction. To this end, we first prove a technical lemma.
\begin{lemma}\label{lem_cons_0}
Assume that the exact velocity of problem~(\ref{weakform0}) satisfies $\boldsymbol{u} \in \boldsymbol{X}$. Then there exists a constant $C> 0$, independent of $h$ and the position of the interface relative to the mesh, such that
\begin{equation*}
\begin{aligned}
&\left|A_h(\,(\boldsymbol{u}, R(\boldsymbol{u})), (\boldsymbol{v}_h,R_h(\boldsymbol{v}_h))\,)-\sum_{s=\pm}\int_{\Omega^s}(-\nabla\cdot (2\mu\boldsymbol{\epsilon}(\boldsymbol{u})) +\nabla R(\boldsymbol{u}))\cdot \boldsymbol{\pi}_h^{\rm RT}\boldsymbol{v}_h\right|\\
&\qquad \qquad\leq Ch\|\boldsymbol{u}\|_{H^2(\cup \Omega^\pm)} \|\boldsymbol{v}_h\|_{1,h}\qquad \forall\,\boldsymbol{v}_h\in \boldsymbol{X}_h^{\rm IFE}.
\end{aligned}
\end{equation*}
\end{lemma}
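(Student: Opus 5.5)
We integrate by parts elementwise in $A_h((\boldsymbol{u},R(\boldsymbol{u})),(\boldsymbol{v}_h,R_h(\boldsymbol{v}_h)))$ and compare the result with the reconstructed right-hand side. Set $\boldsymbol{\sigma}=\boldsymbol{\sigma}(\mu,\boldsymbol{u},R(\boldsymbol{u}))$, so that $-\nabla\cdot\boldsymbol{\sigma}=-\nabla\cdot(2\mu\boldsymbol{\epsilon}(\boldsymbol{u}))+\nabla R(\boldsymbol{u})$ in $\Omega^\pm$.

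\textbf{Step 1: algebraic reduction.} Since $\boldsymbol{u}$ is continuous with $[\boldsymbol{u}]=\boldsymbol{0}$ on every edge, all terms of $A_h$ containing $[\boldsymbol{u}]$ vanish. So do the terms $b_h(\boldsymbol{u},R_h(\boldsymbol{v}_h))$, because $\nabla\cdot\boldsymbol{u}=0$. We are left with
\[
\sum_T\int_T \sigma_h\!:\!\nabla\boldsymbol{v}_h-\sum_{e\in\mathcal{E}_h^\Gamma}\int_e\{\sigma_h\boldsymbol{n}_e\}\cdot[\boldsymbol{v}_h],
\]
where $\sigma_h:=2\mu_h\boldsymbol{\epsilon}(\boldsymbol{u})-R(\boldsymbol{u})\mathbb{I}$ uses the \emph{discrete} coefficient $\mu_h$. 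On $T^\triangle$ the coefficient $\mu_h$ differs from $\mu$. Replacing $\mu_h$ by $\mu$ costs at most $C\|\nabla\boldsymbol{u}\|_{L^2(U(\Gamma,Ch^2))}\|\boldsymbol{v}_h\|_{1,h}$, which by Lemma~\ref{strip} applied to the extensions $\nabla\boldsymbol{u}_E^\pm$ is $\le Ch\|\boldsymbol{u}\|_{H^2(\cup\Omega^\pm)}\|\boldsymbol{v}_h\|_{1,h}$. The edge part of this mismatch is handled the same way, using $e^\pm=e_h^\pm$.

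Next we integrate by parts on each $T\cap\Omega^\pm$. The interface terms on $\Gamma_T$ cancel because $[\boldsymbol{\sigma}\boldsymbol{n}]_\Gamma=0$ and $\boldsymbol{v}_h\in H^1(T)$. The boundary terms on all edges combine into $\sum_{e}\int_e\boldsymbol{\sigma}\boldsymbol{n}_e\cdot[\boldsymbol{v}_h]$, where $\boldsymbol{\sigma}\boldsymbol{n}_e$ is single valued. On interface edges this cancels against the consistency term. We obtain
\[
\sum_{s=\pm}\int_{\Omega^s}(-\nabla\cdot\boldsymbol{\sigma})\cdot\boldsymbol{v}_h+\sum_{e\in\mathcal{E}_h\setminus\mathcal{E}_h^\Gamma}\int_e\boldsymbol{\sigma}\boldsymbol{n}_e\cdot[\boldsymbol{v}_h].
\]

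\textbf{Step 2: comparison with the RT reconstruction.} We must bound
\[
\sum_{s=\pm}\int_{\Omega^s}(-\nabla\cdot\boldsymbol{\sigma})\cdot(\boldsymbol{v}_h-\boldsymbol{\pi}_h^{\rm RT}\boldsymbol{v}_h)+\sum_{e\in\mathcal{E}_h\setminus\mathcal{E}_h^\Gamma}\int_e\boldsymbol{\sigma}\boldsymbol{n}_e\cdot[\boldsymbol{v}_h].
\]

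For the edge sum on non-interface edges, $\boldsymbol{\sigma}$ is $H^1$ on both neighbouring elements and $\int_e[\boldsymbol{v}_h]=\boldsymbol{0}$. We therefore subtract the edge mean of $\boldsymbol{\sigma}\boldsymbol{n}_e$ (or a local $P_0$ average). The standard CR argument then gives $Ch(|\boldsymbol{u}|_{H^2}+|R(\boldsymbol{u})|_{H^1})\|\boldsymbol{v}_h\|_{1,h}$, and \eqref{cons_qprime} turns this into a bound by $Ch\|\boldsymbol{u}\|_{H^2(\cup\Omega^\pm)}$. For the volume term we use $\|\nabla\cdot\boldsymbol{\sigma}\|_{L^2(\Omega^\pm)}\le C\|\boldsymbol{u}\|_{H^2(\cup\Omega^\pm)}$ (again via \eqref{cons_qprime}), together with the elementwise estimate
\[
\|\boldsymbol{v}_h-\boldsymbol{\pi}_h^{\rm RT}\boldsymbol{v}_h\|_{L^2(T)}\le Ch_T\|\nabla\boldsymbol{v}_h\|_{L^2(T)}.
\]

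\textbf{Step 3: the main obstacle.} The main difficulty is this RT estimate on interface elements. There $\boldsymbol{v}_h$ is only piecewise linear, but it is in $H^1(T)$, and the RT interpolant reproduces constants. A Bramble--Hilbert/scaling argument on $H^1(T)$ therefore gives the bound, provided $\boldsymbol{\pi}_h^{\rm RT}$ on $T$ only involves edge means of $\boldsymbol{v}_h|_T$. It does not: $\boldsymbol{\pi}_h^{\rm RT}$ uses the averages $\{\boldsymbol{v}_h\}$. The discrepancy $\tfrac12\int_e[\boldsymbol{v}_h]\cdot\boldsymbol{n}_e$ is zero, however, because $\int_e[\boldsymbol{v}_h]=\boldsymbol{0}$. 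So the local argument applies, and the trace bound \eqref{trace_IFE0} (with $C$ independent of the cut) gives uniformity in the interface position. Summing the three contributions yields the claimed $Ch\|\boldsymbol{u}\|_{H^2(\cup\Omega^\pm)}\|\boldsymbol{v}_h\|_{1,h}$.
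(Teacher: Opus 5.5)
Your proposal is correct and follows the paper's proof essentially step by step. You use the same split into the integration-by-parts consistency term and the reconstruction term $\sum_{s=\pm}\int_{\Omega^s}(-\nabla\cdot\boldsymbol{\sigma})\cdot(\boldsymbol{v}_h-\boldsymbol{\pi}_h^{\rm RT}\boldsymbol{v}_h)$, bounding the first via the $\mu-\mu_h$ mismatch on $T^\triangle$ (Lemma~\ref{strip} with $\delta=Ch^2$) plus the standard CR argument on non-interface edges. Your Step~3 also usefully justifies, for IFE functions, the bound $\|\boldsymbol{v}_h-\boldsymbol{\pi}_h^{\rm RT}\boldsymbol{v}_h\|_{L^2(\Omega)}\le Ch\|\boldsymbol{v}_h\|_{1,h}$, which the paper simply cites from Linke's work.

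One small correction: a non-interface edge $e\subset\Omega^s$ can belong to a cut element $T$, on which $\boldsymbol{\sigma}\notin H^1(T)$. In that edge estimate you should therefore work with the extension $\boldsymbol{\sigma}_E^s=2\mu^s\boldsymbol{\epsilon}(\boldsymbol{u}_E^s)-(R(\boldsymbol{u}))_E^s\,\mathbb{I}$, which coincides with $\boldsymbol{\sigma}$ on $e$.
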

\begin{proof}
By the triangle inequality, we decompose the left-hand side as follows:
\begin{equation}\label{pro_deco_Ah}
\begin{aligned}
&\left|A_h(\,(\boldsymbol{u}, R(\boldsymbol{u})), (\boldsymbol{v}_h,R_h(\boldsymbol{v}_h))\,)-\sum_{s=\pm}\int_{\Omega^s}(-\nabla\cdot (2\mu\boldsymbol{\epsilon}(\boldsymbol{u})) +\nabla R(\boldsymbol{u}))\cdot \boldsymbol{\pi}_h^{\rm RT}\boldsymbol{v}_h\right|\\
&\qquad\leq \left|A_h(\,(\boldsymbol{u}, R(\boldsymbol{u})), (\boldsymbol{v}_h,R_h(\boldsymbol{v}_h))\,)-\sum_{s=\pm}\int_{\Omega^s}(-\nabla\cdot (2\mu\boldsymbol{\epsilon}(\boldsymbol{u})) +\nabla R(\boldsymbol{u}))\cdot \boldsymbol{v}_h\right|\\
&\qquad\qquad+\left| \sum_{s=\pm}\int_{\Omega^s}(-\nabla\cdot (2\mu\boldsymbol{\epsilon}(\boldsymbol{u})) +\nabla R(\boldsymbol{u}))\cdot (\boldsymbol{\pi}_h^{\rm RT}\boldsymbol{v}_h-\boldsymbol{v}_h)\right|\\
&:=(\mathrm{I})_1+(\mathrm{I})_2.
\end{aligned}
\end{equation}
For the second term $(\mathrm{I})_2$, we apply the standard Raviart--Thomas interpolation error estimate (see Lemma 2 in \cite{LINKE2014782})
\[
\|\boldsymbol{\pi}_h^{\rm RT}\boldsymbol{v}_h-\boldsymbol{v}_h\|_{L^2(\Omega)}\leq Ch\|\boldsymbol{v}_h\|_{1,h}
\]
and the following estimate from \eqref{cons_qprime}
\begin{equation}\label{pro_RU}
\|R(\boldsymbol{u})\|_{H^1(\cup \Omega^\pm)}\leq C\|\boldsymbol{u}\|_{H^2(\cup\Omega^\pm)}
\end{equation}
 to obtain
\[
(\mathrm{I})_2\leq Ch\|\boldsymbol{u}\|_{H^2(\cup \Omega^\pm)} \|\boldsymbol{v}_h\|_{1,h}.
\]
For the first term $(\mathrm{I})_1$, noting that the pair $(\boldsymbol{u}, R(\boldsymbol{u}))$ satisfies the interface jump condition
\[
[\boldsymbol{\sigma}(\mu,\boldsymbol{u},R(\boldsymbol{u}) )\boldsymbol{n}]_\Gamma=\boldsymbol{0},
\]
we apply integration by parts to derive
\begin{equation}\label{pro_I2_0}
\begin{aligned}
\sum_{s=\pm}\int_{\Omega^s}(&-\nabla\cdot (2\mu\boldsymbol{\epsilon}(\boldsymbol{u})) +\nabla R(\boldsymbol{u}))\cdot \boldsymbol{v}_h\\
&=\sum_{T\in\mathcal{T}_h}\int_{T}2\mu\boldsymbol{\epsilon}(\boldsymbol{u}):\boldsymbol{\epsilon}(\boldsymbol{v}_h)-\sum_{e\in\mathcal{E}_h}\int_e \{ 2\mu \boldsymbol{\epsilon}(\boldsymbol{u})\boldsymbol{n}_e\}\cdot[\boldsymbol{v}_h]\\
&\quad-\sum_{T\in\mathcal{T}_h} \int_T R(\boldsymbol{u})\nabla\cdot \boldsymbol{v}_h+\sum_{e\in\mathcal{E}_h}\int_e\{R(\boldsymbol{u})\}[\boldsymbol{v}_h]\cdot\boldsymbol{n}_e.
\end{aligned}
\end{equation}
On the other hand, using $\nabla \cdot \boldsymbol{u} = 0$ and $[\boldsymbol{u}]_e = \boldsymbol{0}$ on each edge $e$, we deduce from \eqref{def_Ah} that
\[
b_h(\boldsymbol{u},R_h(\boldsymbol{v}_h))=0.
\]
Hence, we have
\[
\begin{aligned}
A_h(\,(\boldsymbol{u}, R(\boldsymbol{u})), (\boldsymbol{v}_h,R_h(\boldsymbol{v}_h))\,)
&=a_h(\boldsymbol{u},\boldsymbol{v}_h)+b_h(\boldsymbol{v}_h, R(\boldsymbol{u}))-b_h(\boldsymbol{u},R_h(\boldsymbol{v}_h))\\
&=a_h(\boldsymbol{u},\boldsymbol{v}_h)+b_h(\boldsymbol{v}_h, R(\boldsymbol{u}) ).
\end{aligned}
\]
Using $[\boldsymbol{u}]_e = \boldsymbol{0}$ again and applying \eqref{def_Ah}, we obtain
\begin{equation}\label{pro_I2_1}
\begin{aligned}
A_h(\,(\boldsymbol{u}, R(\boldsymbol{u})), (\boldsymbol{v}_h,R_h(\boldsymbol{v}_h))\,)&=\sum_{T\in\mathcal{T}_h}\int_{T}2\mu_h\boldsymbol{\epsilon}(\boldsymbol{u}):\boldsymbol{\epsilon}(\boldsymbol{v}_h)-\sum_{e\in\mathcal{E}_h^\Gamma}\int_e \{ 2\mu_h \boldsymbol{\epsilon}(\boldsymbol{u})\boldsymbol{n}_e\}\cdot[\boldsymbol{v}_h]\\
&\quad-\sum_{T\in\mathcal{T}_h} \int_T R(\boldsymbol{u})\nabla\cdot \boldsymbol{v}_h+\sum_{e\in\mathcal{E}_h^\Gamma}\int_e\{R(\boldsymbol{u})\}[\boldsymbol{v}_h]\cdot\boldsymbol{n}_e.
\end{aligned}
\end{equation}
Combining \eqref{pro_I2_0} and \eqref{pro_I2_1}, and noting that $\mu|_e = \mu_h|_e$ on each edge $e$, we further derive
\begin{equation}\label{pro_I2_2}
\begin{aligned}
(\mathrm{I})_1\leq &\left|\sum_{T\in\mathcal{T}_h}\int_{T^\triangle}2(\mu-\mu_h)\boldsymbol{\epsilon}(\boldsymbol{u}):\boldsymbol{\epsilon}(\boldsymbol{v}_h)\right|\\
&+\left|\sum_{e\in\mathcal{E}_h^{non}}\int_e -\{ 2\mu \boldsymbol{\epsilon}(\boldsymbol{u})\boldsymbol{n}_e\}\cdot[\boldsymbol{v}_h]+ \{R(\boldsymbol{u})\}[\boldsymbol{v}_h]\cdot\boldsymbol{n}_e\right|.
\end{aligned}
\end{equation}
By \eqref{triT_rela}, we may apply  Lemma~\ref{strip} with $\delta=Ch^2$ to estimate the first term:
\[
\left|\sum_{T\in\mathcal{T}_h}\int_{T^\triangle}2(\mu-\mu_h)\boldsymbol{\epsilon}(\boldsymbol{u}):\boldsymbol{\epsilon}(\boldsymbol{v}_h)\right|\leq Ch\|\boldsymbol{u}\|_{H^2(\cup\Omega^\pm)}\|\boldsymbol{v}_h\|_{1,h}.
\]
For the second term in \eqref{pro_I2_2}, since it involves only non-interface edges, standard arguments yield
\[
\left|\sum_{e\in\mathcal{E}_h^{non}}\int_e \{ 2\mu \boldsymbol{\epsilon}(\boldsymbol{u})\boldsymbol{n}_e\}\cdot[\boldsymbol{v}_h]+ \{R(\boldsymbol{u})\}[\boldsymbol{v}_h]\cdot\boldsymbol{n}_e\right|\leq Ch\|\boldsymbol{u}\|_{H^2(\cup\Omega^\pm)}\|\boldsymbol{v}_h\|_{1,h},
\]
where the inequality \eqref{pro_RU} is also used. Therefore, the estimate for term $(\mathrm{I})_1$ is completed as
\[
(\mathrm{I})_1\leq Ch\|\boldsymbol{u}\|_{H^2(\cup\Omega^\pm)}\|\boldsymbol{v}_h\|_{1,h}.
\]
This completes the proof.
\end{proof}

We are now in a position to derive the following consistency error estimate.
\begin{lemma}\label{lem_consis} Assume that the exact velocity of problem~(\ref{weakform0}) satisfies $\boldsymbol{u} \in \boldsymbol{X}$. Then, for the discrete scheme~\eqref{dis_prob_Zh}, the following consistency error estimate holds:
\[
\sup_{\boldsymbol{w}_h\in \boldsymbol{Z}_h}\frac{\left|A_h(\,(\boldsymbol{u}, R(\boldsymbol{u})), (\boldsymbol{w}_h,R_h(\boldsymbol{w}_h))\,)-l_h(\boldsymbol{w}_h)\right|}{\|\boldsymbol{w}_h\|_{1,h}}\leq Ch\|\boldsymbol{u}\|_{H^2(\cup\Omega^\pm)},
\] 
where the constant $C> 0$ is independent of $h$ and the position of the interface relative to the mesh.
\end{lemma}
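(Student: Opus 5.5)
The plan is to combine Lemma~\ref{lem_cons_0} with the strong form of the equations. The key point is that the pressure enters only through a gradient, and gradients are annihilated by pointwise divergence-free $H(\operatorname{div})$ reconstructions.

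Fix $\boldsymbol{w}_h\in\boldsymbol{Z}_h$. In each $\Omega^\pm$ the exact solution satisfies $-\nabla\cdot(2\mu\boldsymbol{\epsilon}(\boldsymbol{u}))+\nabla p=\boldsymbol{f}$, with $p=R(\boldsymbol{u})+\hat p$ and $\hat p\in Y=H^1(\Omega)\cap L^2_0(\Omega)$. Hence
\[
l_h(\boldsymbol{w}_h)=\sum_{s=\pm}\int_{\Omega^s}\big(-\nabla\cdot(2\mu\boldsymbol{\epsilon}(\boldsymbol{u}))+\nabla R(\boldsymbol{u})\big)\cdot\boldsymbol{\pi}_h^{\rm RT}\boldsymbol{w}_h+\int_\Omega\nabla\hat p\cdot\boldsymbol{\pi}_h^{\rm RT}\boldsymbol{w}_h .
\]
Since $\hat p$ is continuous across $\Gamma$, $\nabla\hat p$ is a genuine global $L^2$ gradient, so the last integral is over all of $\Omega$ with no interface contribution.

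For the $\hat p$ term, I would integrate by parts globally. Because $\boldsymbol{\pi}_h^{\rm RT}\boldsymbol{w}_h\in H(\operatorname{div};\Omega)$ and $\hat p\in H^1(\Omega)$,
\[
\int_\Omega\nabla\hat p\cdot\boldsymbol{\pi}_h^{\rm RT}\boldsymbol{w}_h=-\int_\Omega\hat p\,\nabla\cdot\boldsymbol{\pi}_h^{\rm RT}\boldsymbol{w}_h+\int_{\partial\Omega}\hat p\,\boldsymbol{\pi}_h^{\rm RT}\boldsymbol{w}_h\cdot\boldsymbol{n}.
\]
The volume term vanishes by \eqref{w_h_RT}. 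For the boundary term, note that on a boundary edge $\{\boldsymbol{w}_h\}=\boldsymbol{w}_h$ and $\int_e\boldsymbol{w}_h=\boldsymbol{0}$, so the RT normal flux is zero there and the boundary term vanishes. Therefore
\[
A_h\big((\boldsymbol{u},R(\boldsymbol{u})),(\boldsymbol{w}_h,R_h(\boldsymbol{w}_h))\big)-l_h(\boldsymbol{w}_h)
\]
is exactly the quantity bounded in Lemma~\ref{lem_cons_0}, which gives $Ch\|\boldsymbol{u}\|_{H^2(\cup\Omega^\pm)}\|\boldsymbol{w}_h\|_{1,h}$. Dividing by $\|\boldsymbol{w}_h\|_{1,h}$ and taking the supremum completes the argument. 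The resulting bound involves no pressure norm, since $\|R(\boldsymbol{u})\|$ is controlled by $\boldsymbol{u}$ through \eqref{pro_RU}.

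The one point needing care is the justification of the identity above. It uses the fact that $\boldsymbol{f}\in L^2$ equals the piecewise strong residual almost everywhere, which holds by the regularity assumption $(\boldsymbol{u},p)\in\widetilde{\boldsymbol{H}^2H^1}$. It also uses the fact that the splitting $p=R(\boldsymbol{u})+\hat p$ puts all the discontinuity of $p$ into $R(\boldsymbol{u})$. This is precisely why the decomposition \eqref{decomp_con} is needed: a direct global integration by parts of $\nabla p$ would produce a jump term $[p]_\Gamma\,\boldsymbol{\pi}_h^{\rm RT}\boldsymbol{w}_h\cdot\boldsymbol{n}$ on $\Gamma$, which is not small and is not pressure robust.
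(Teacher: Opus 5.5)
Your proposal is correct and follows essentially the same route as the paper. You split $p=R(\boldsymbol{u})+\hat p$ with $\hat p\in H^1(\Omega)$, eliminate $\int_\Omega\nabla\hat p\cdot\boldsymbol{\pi}_h^{\rm RT}\boldsymbol{w}_h$ using \eqref{w_h_RT} and the vanishing RT normal flux on $\partial\Omega$, and then invoke Lemma~\ref{lem_cons_0}; your explicit check of the boundary term (on boundary edges $\{\boldsymbol{w}_h\}=\boldsymbol{w}_h$ has zero mean) fills in a detail the paper only asserts.
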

\begin{proof}
Let $\hat{p} = p - R(\boldsymbol{u})$ and $\boldsymbol{w}_h\in \boldsymbol{Z}_h$. By \eqref{hat_q_b}, we have $\hat{p} \in H^1(\Omega)$, and hence
\[
\int_{\Omega} \nabla \hat{p}\cdot  \boldsymbol{\pi}_h^{\rm RT}\boldsymbol{w}_h=0,
\]
where we have used \eqref{w_h_RT} and the fact that $(\boldsymbol{\pi}_h^{\rm RT} \boldsymbol{w}_h\cdot \boldsymbol{n})|_{\partial\Omega} = \boldsymbol{0}$.
Therefore, we have
\begin{equation}\label{pro_lh_er}
\begin{aligned}
l_h(\boldsymbol{w}_h)&=\int_\Omega \boldsymbol{f}\cdot \boldsymbol{\pi}_h^{\rm RT}\boldsymbol{w}_h=\sum_{s=\pm}\int_{\Omega^s}(-\nabla\cdot (2\mu\boldsymbol{\epsilon}(\boldsymbol{u})) +\nabla p) \cdot \boldsymbol{\pi}_h^{\rm RT}\boldsymbol{w}_h\\
&=\sum_{s=\pm}\int_{\Omega^s}(-\nabla\cdot (2\mu\boldsymbol{\epsilon}(\boldsymbol{u})) +\nabla R(\boldsymbol{u})+\nabla \hat{p})\cdot \boldsymbol{\pi}_h^{\rm RT}\boldsymbol{w}_h\\
&=\sum_{s=\pm}\int_{\Omega^s}(-\nabla\cdot (2\mu\boldsymbol{\epsilon}(\boldsymbol{u})) +\nabla R(\boldsymbol{u}))\cdot \boldsymbol{\pi}_h^{\rm RT}\boldsymbol{w}_h.
\end{aligned}
\end{equation}
Now, applying Lemma~\ref{lem_cons_0}, we obtain
\[
|A_h(\,(\boldsymbol{u}, R(\boldsymbol{u})), (\boldsymbol{w}_h,R_h(\boldsymbol{w}_h))\,)-l_h(\boldsymbol{w}_h)|\leq Ch\|\boldsymbol{u}\|_{H^2(\cup \Omega^\pm)} \|\boldsymbol{w}_h\|_{1,h}.
\]
This completes the proof.
\end{proof}

\subsection{Error Estimates}
With the above preparations, we state the main error estimates as follows.
\begin{theorem}\label{theo_ulti}
Assume that the exact solution of problem~\eqref{weakform0} satisfies $(\boldsymbol{u}, p) \in \widetilde{\boldsymbol{H}^2H^1}$. 
Then the discrete solution $(\boldsymbol{u}_h, p_h)$ of scheme~\eqref{IFE_method}, with $p_h = R_h(\boldsymbol{u}_h) + \hat{p}_h$, satisfies the following error estimates:
\begin{align}
\|\boldsymbol{u}-\boldsymbol{u}_h\|_{1,h}&\leq C\|\boldsymbol{u}-\boldsymbol{u}_h\|_{*,h}\leq Ch\|\boldsymbol{u}\|_{H^2(\cup \Omega^\pm)},\label{error_1}\\
\|\pi_h^0 p-\hat{p}_h\|_{L^2(\Omega)}&\leq Ch\|\boldsymbol{u}\|_{H^2(\cup \Omega^\pm)},\label{error_2}\\
\|p-p_h\|_{L^2(\Omega)}&\leq Ch(\|\boldsymbol{u}\|_{H^2(\cup \Omega^\pm)}+\|p\|_{H^1(\cup \Omega^\pm)}),\label{error_3}
\end{align}
where  the constant $C > 0$ is independent of $h$ and the position of the interface relative to the mesh.
\end{theorem}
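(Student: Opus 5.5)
For \eqref{error_1} we use a Strang-type argument on the discretely divergence-free space $\boldsymbol{Z}_h$. Write $\boldsymbol{u}-\boldsymbol{u}_h=(\boldsymbol{u}-\boldsymbol{\pi}_h^{\rm IFE}\boldsymbol{u})+\boldsymbol{w}_h$ with $\boldsymbol{w}_h=\boldsymbol{\pi}_h^{\rm IFE}\boldsymbol{u}-\boldsymbol{u}_h$. By \eqref{zh_pi} and the second equation of \eqref{IFE_method}, $\boldsymbol{w}_h\in\boldsymbol{Z}_h$. The interpolation part is bounded by \eqref{iter_err_new1}, so we only need to control $\boldsymbol{w}_h$. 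By coercivity (Lemma~\ref{lem_cor}) and the reduced problem \eqref{dis_prob_Zh},
\begin{equation*}
\begin{aligned}
C_a\|\boldsymbol{w}_h\|_{1,h}^2&\le A_h(\,(\boldsymbol{w}_h,R_h(\boldsymbol{w}_h)),(\boldsymbol{w}_h,R_h(\boldsymbol{w}_h))\,)\\
&= A_h(\,(\boldsymbol{\pi}_h^{\rm IFE}\boldsymbol{u}-\boldsymbol{u},R_h(\boldsymbol{\pi}_h^{\rm IFE}\boldsymbol{u})-R(\boldsymbol{u})),(\boldsymbol{w}_h,R_h(\boldsymbol{w}_h))\,)\\
&\quad+\big[A_h(\,(\boldsymbol{u},R(\boldsymbol{u})),(\boldsymbol{w}_h,R_h(\boldsymbol{w}_h))\,)-l_h(\boldsymbol{w}_h)\big].
\end{aligned}
\end{equation*}
The bracket is at most $Ch\|\boldsymbol{u}\|_{H^2(\cup\Omega^\pm)}\|\boldsymbol{w}_h\|_{1,h}$ by Lemma~\ref{lem_consis}. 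For the first term we insert $\pi_h^0R(\boldsymbol{u})$ into the pressure slot. Because $\boldsymbol{w}_h\in\boldsymbol{Z}_h$ and $\pi_h^0R(\boldsymbol{u})\in Y_h$, Remark~\ref{remark_bh} gives $b_h(\boldsymbol{w}_h,\pi_h^0R(\boldsymbol{u}))=0$, so adding this term costs nothing. Boundedness \eqref{bd_Ah}, together with \eqref{iter_err_new1}, \eqref{iter_err_new2} and the norm equivalence \eqref{norm_equal}, then bounds the first term by $Ch\|\boldsymbol{u}\|_{H^2(\cup\Omega^\pm)}\|\boldsymbol{w}_h\|_{1,h}$. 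It follows that $\|\boldsymbol{w}_h\|_{1,h}\le Ch\|\boldsymbol{u}\|_{H^2(\cup\Omega^\pm)}$. Since $\interleave\boldsymbol{w}_h\interleave_{*,h}\le C\|\boldsymbol{w}_h\|_{1,h}$ by \eqref{norm_equal}, the triangle inequality gives \eqref{error_1}; its first inequality is \eqref{norm_rela}.

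\textbf{Main obstacle.} The key point is that no term $\|p\|_{H^1}$ appears. This holds because only $R(\boldsymbol{u})$, which is controlled by $\boldsymbol{u}$ through \eqref{cons_qprime}, enters the argument. The continuous part $\hat p$ is removed entirely by the $H(\mathrm{div})$ reconstruction inside Lemma~\ref{lem_consis}. For this reason we must take care to test only with $\boldsymbol{Z}_h$ functions.

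For \eqref{error_2} we use the inf-sup condition (Lemma~\ref{lem_inf_sup}) on $\pi_h^0p-\hat p_h\in Y_h$. Since $\pi_h^0 p=\pi_h^0\hat p$ (because $\pi_h^0R(\boldsymbol{u})$ is subtracted consistently) and $b_h(\boldsymbol{v}_h,\pi_h^0\hat p)=b_h(\boldsymbol{v}_h,\hat p)$ for piecewise-constant divergence, we need $b_h(\boldsymbol{v}_h,\pi_h^0 p-\hat p_h)$ for arbitrary $\boldsymbol{v}_h\in\boldsymbol{X}_h^{\rm IFE}$. The first equation of \eqref{IFE_method} gives
\begin{equation*}
b_h(\boldsymbol{v}_h,\hat p_h)=l_h(\boldsymbol{v}_h)-A_h(\,(\boldsymbol{u}_h,R_h(\boldsymbol{u}_h)),(\boldsymbol{v}_h,R_h(\boldsymbol{v}_h))\,).
\end{equation*}
For $b_h(\boldsymbol{v}_h,\pi_h^0p)$ we treat $\pi_h^0p=\pi_h^0\hat p+\pi_h^0R(\boldsymbol{u})$. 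Integrating by parts elementwise, with $\hat p$ continuous, $\int_\Omega\nabla\hat p\cdot\boldsymbol{\pi}_h^{\rm RT}\boldsymbol{v}_h=b_h(\boldsymbol{v}_h,\pi_h^0\hat p)$; this uses $\nabla\cdot\boldsymbol{\pi}_h^{\rm RT}\boldsymbol{v}_h=\nabla_h\cdot\boldsymbol{v}_h$ from \eqref{commu_RT}.

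This yields
\begin{equation*}
b_h(\boldsymbol{v}_h,\pi_h^0p-\hat p_h)=\Big[\int_\Omega \nabla\hat p\cdot\boldsymbol{\pi}_h^{\rm RT}\boldsymbol{v}_h+\sum_{s=\pm}\int_{\Omega^s}(-\nabla\cdot(2\mu\boldsymbol\epsilon(\boldsymbol u))+\nabla R(\boldsymbol u))\cdot\boldsymbol{\pi}_h^{\rm RT}\boldsymbol{v}_h-l_h(\boldsymbol{v}_h)\Big]+(\ast),
\end{equation*}
where the bracket vanishes exactly, and $(\ast)$ is a combination of Lemma~\ref{lem_cons_0} and $A_h$ of the error $(\boldsymbol u-\boldsymbol u_h,R(\boldsymbol u)-R_h(\boldsymbol u_h))$ (again with $\pi_h^0R(\boldsymbol{u})$ absorbed). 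These are bounded by $Ch\|\boldsymbol u\|_{H^2(\cup\Omega^\pm)}\|\boldsymbol v_h\|_{1,h}$ using \eqref{error_1}, \eqref{bd_Ah}, \eqref{iter_err_new2} and \eqref{jumpe_tpre1}. Dividing by $\|\boldsymbol v_h\|_{1,h}$ and applying \eqref{infsup1} gives \eqref{error_2}. The only delicate point is bookkeeping the $\pi_h^0R(\boldsymbol u)$ contribution, which we handle by showing $b_h(\boldsymbol v_h,\pi_h^0R(\boldsymbol u))$ is matched inside the $A_h$ difference.

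Finally, for \eqref{error_3} we split
\begin{equation*}
p-p_h=\big(p-R_h(\boldsymbol{\pi}_h^{\rm IFE}\boldsymbol u)-\pi_h^0p\big)+R_h(\boldsymbol{\pi}_h^{\rm IFE}\boldsymbol u-\boldsymbol u_h)+(\pi_h^0p-\hat p_h).
\end{equation*}
The first term is bounded by \eqref{ultimate_pr1}, and this is the only place where $\|p\|_{H^1(\cup\Omega^\pm)}$ enters. The second term is bounded by Lemma~\ref{lem_jumpe} and the bound on $\boldsymbol{w}_h$. The third term is \eqref{error_2}.
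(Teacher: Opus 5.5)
Your proposal is correct and follows essentially the same route as the paper: a Strang-type argument on $\boldsymbol{Z}_h$ using $b_h(\boldsymbol{w}_h,\pi_h^0R(\boldsymbol{u}))=0$, then inf-sup with the $H(\operatorname{div})$ reconstruction cancelling $\hat p$ exactly, then the same three-term splitting of $p-p_h$. The only difference is that in part (ii) you pass only $\pi_h^0\hat p$ through $\boldsymbol{\pi}_h^{\rm RT}$ and absorb $b_h(\boldsymbol{v}_h,\pi_h^0R(\boldsymbol{u}))$ directly into the $A_h$ error term, which merely bypasses the paper's term $(\mathrm{III})_1$ that it shows to vanish.

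One caution: your aside claiming $\pi_h^0p=\pi_h^0\hat p$ and $b_h(\boldsymbol{v}_h,\pi_h^0\hat p)=b_h(\boldsymbol{v}_h,\hat p)$ is false as written. In general $\pi_h^0R(\boldsymbol{u})\neq 0$ (unlike $\pi_h^0R_h(\boldsymbol{v}_h)=0$), and $b_h(\boldsymbol{v}_h,\hat p)$ keeps nonvanishing interface-edge terms. Since your actual derivation never uses either claim, you should simply delete that sentence.
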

\begin{proof}
i) The analysis begins with \eqref{dis_prob_Zh}, where the trial and test functions lie in the discretely divergence-free space $\boldsymbol{Z}_h$.
For an arbitrary $\boldsymbol{v}_h\in \boldsymbol{Z}_h$, let $\boldsymbol{w}_h=\boldsymbol{u}_h-\boldsymbol{v}_h$.
Then $\boldsymbol{w}_h\in \boldsymbol{Z}_h$, and we obtain
\begin{equation}
\begin{aligned}
C_{a}\|\boldsymbol{w}_h\|_{1,h}^2&\leq A_h(\,(\boldsymbol{w}_h, R_h(\boldsymbol{w}_h)),(\boldsymbol{w}_h,R_h(\boldsymbol{w}_h))\,)\\
&= A_h(\,(\boldsymbol{u}_h-\boldsymbol{v}_h, R_h(\boldsymbol{u}_h)-R_h(\boldsymbol{v}_h)), (\boldsymbol{w}_h,R_h(\boldsymbol{w}_h))\,)\\
&= A_h(\,(\boldsymbol{u}-\boldsymbol{v}_h, R(\boldsymbol{u})-R_h(\boldsymbol{v}_h)),(\boldsymbol{w}_h,R_h(\boldsymbol{w}_h))\,)\\
&\quad+A_h(\,(\boldsymbol{u}_h-\boldsymbol{u}, R_h(\boldsymbol{u}_h)-R(\boldsymbol{u}) ), (\boldsymbol{w}_h,R_h(\boldsymbol{w}_h))\,).
\end{aligned}
\end{equation}
Since $R(\boldsymbol{u})\in L_0^2(\Omega)$, we have $\pi_{h}^0 R(\boldsymbol{u})\in Y_h$. Moreover, because $\boldsymbol{w}_h\in \boldsymbol{Z}_h$, using \eqref{def_Ah} and  \eqref{def_Zh}, it follows that
\begin{equation}\label{pro_bh_pi}
A_h(\,(\boldsymbol{0}, \pi_{h}^0 R(\boldsymbol{u})),(\boldsymbol{w}_h,R_h(\boldsymbol{w}_h))\,)=b_h(\boldsymbol{w}_h, \pi_{h}^0 R(\boldsymbol{u}))=0.
\end{equation}
Therefore, we have
\begin{equation}
C_{a}\|\boldsymbol{w}_h\|_{1,h}^2\leq (\mathrm{II})_1+(\mathrm{II})_2,
\end{equation}
with
\[
\begin{aligned}
(\mathrm{II})_1&:= \left| A_h(\,(\boldsymbol{u}-\boldsymbol{v}_h, R(\boldsymbol{u})-\pi_{h}^0 R(\boldsymbol{u})-R_h(\boldsymbol{v}_h)),\,(\boldsymbol{w}_h, R_h(\boldsymbol{w}_h))\,) \right|,\\
(\mathrm{II})_2&:=\left|A_h(\,(\boldsymbol{u}_h-\boldsymbol{u}, R_h(\boldsymbol{u}_h)-R(\boldsymbol{u}) ), \, (\boldsymbol{w}_h,R_h(\boldsymbol{w}_h))\,)\right|.
\end{aligned}
\]
For the first term $(\mathrm{II})_1$, by \eqref{bd_Ah}, \eqref{norm_equal} and \eqref{def_norms}, we obtain
\[
\begin{aligned}
(\mathrm{II})_1&\leq C_A \|(\boldsymbol{u}-\boldsymbol{v}_h, R(\boldsymbol{u})-\pi_{h}^0 R(\boldsymbol{u})-R_h(\boldsymbol{v}_h))\|_*\,\|(\boldsymbol{w}_h,R_h(\boldsymbol{w}_h))\|_*\\
&\leq C_AC_2\|(\boldsymbol{u}-\boldsymbol{v}_h, R(\boldsymbol{u})-\pi_{h}^0 R(\boldsymbol{u})-R_h(\boldsymbol{v}_h))\|_*\,\|\boldsymbol{w}_h\|_{1,h}\\
&\leq C_AC_2\left(\interleave \boldsymbol{u}-\boldsymbol{v}_h  \interleave_{*,h}+\|R(\boldsymbol{u})-\pi_{h}^0 R(\boldsymbol{u})-R_h(\boldsymbol{v}_h)\|_{*,pre}\right)\|\boldsymbol{w}_h\|_{1,h}.
\end{aligned}
\]
For the second term $(\mathrm{II})_2$, we use the discrete problem \eqref{dis_prob_Zh}  to get
\[
(\mathrm{II})_2=|A_h(\,(\boldsymbol{u}, R(\boldsymbol{u})), (\boldsymbol{w}_h,R_h(\boldsymbol{w}_h))\,)-l_h(\boldsymbol{w}_h)|.
\]
Applying the triangle inequality and \eqref{norm_equal}, we have
\[
\|\boldsymbol{u}-\boldsymbol{u}_h\|_{*,h}\leq \|\boldsymbol{u}-\boldsymbol{v}_h\|_{*,h}+\|\boldsymbol{u}_h-\boldsymbol{v}_h\|_{*,h}\leq \interleave \boldsymbol{u}-\boldsymbol{v}_h\interleave_{*,h}+C_2\|\boldsymbol{w}_h\|_{1,h}.
\]
Combining the above results, we obtain the following version of Strang's second lemma:
\[
\begin{aligned}
\|\boldsymbol{u}-\boldsymbol{u}_h\|_{*,h}&\leq (1+C_AC_2^2C_a^{-1})\inf_{\boldsymbol{v}_h\in \boldsymbol{Z}_h}\left(\frac{}{}\interleave \boldsymbol{u}-\boldsymbol{v}_h\interleave_{*,h} \right.\\
&~~~~\left.\frac{}{}+\|R(\boldsymbol{u})-\pi_{h}^0 R(\boldsymbol{u})-R_h(\boldsymbol{v}_h)\|_{*,pre}\right)\\
&+\frac{1}{C_a}\sup_{\boldsymbol{w}_h\in \boldsymbol{Z}_h}\frac{|A_h(\,(\boldsymbol{u}, R(\boldsymbol{u})), (\boldsymbol{w}_h,R_h(\boldsymbol{w}_h))\,)-l_h(\boldsymbol{w}_h)|}{\|\boldsymbol{w}_h\|_{1,h}}
\end{aligned}
\]
Recalling \eqref{zh_pi}, we apply the interpolation error estimates \eqref{iter_err_new1} and \eqref{iter_err_new2}  to obtain
\[
\begin{aligned}
\inf_{\boldsymbol{v}_h\in \boldsymbol{Z}_h}&\left(\interleave \boldsymbol{u}-\boldsymbol{v}_h\interleave_{*,h}+\|R(\boldsymbol{u})-\pi_{h}^0 R(\boldsymbol{u})-R_h(\boldsymbol{v}_h)\|_{*,pre}\right)\\
&\leq C\interleave \boldsymbol{u}-\boldsymbol{\pi}_{h}^{\rm IFE} \boldsymbol{u}  \interleave_{*,h}+C\|R(\boldsymbol{u})-\pi_{h}^0 R(\boldsymbol{u})-R_h(\boldsymbol{\pi}_{h}^{\rm IFE} \boldsymbol{u} )\|_{*,pre}\\
&\leq  Ch\|\boldsymbol{u}\|_{H^2(\cup \Omega^\pm)}.
\end{aligned}
\]
Together with Lemma~\ref{lem_consis} and the relation \eqref{norm_rela}, this establishes the first error estimate \eqref{error_1}.

ii) Since $\pi_h^0 p-\hat{p}_h\in Y_h$, the discrete inf-sup stability from Lemma~\ref{lem_inf_sup} yields
\begin{equation}\label{pro_p_01}
\|\pi_h^0 p-\hat{p}_h\|_{L^2(\Omega)}\leq \frac{1}{\beta} \sup_{\boldsymbol{v}_h \in \boldsymbol{X}_h^{\rm IFE}} \frac{b_h(\boldsymbol{v}_h, \pi_h^0 p-\hat{p}_h)}{\| \boldsymbol{v}_h \|_{1,h}}.
\end{equation}

Next, we estimate the numerator of the right-hand side. Let $\hat{p}=p-R(\boldsymbol{u})$. 
For any  $\boldsymbol{v}_h \in \boldsymbol{X}_h^{\rm IFE}$, we have  $\nabla\cdot(\boldsymbol{\pi}_h^{\rm RT}\boldsymbol{v}_h)\in Y_h$.
Then, from the property of the $L^2$-projection and the commutativity property \eqref{commu_RT}, it follows that
\begin{equation}\label{pro_p_02}
\begin{aligned}
b_h(\boldsymbol{v}_h, \pi_h^0 p)&=-\sum_{T\in\mathcal{T}_h}\int_T \pi_h^0 p\nabla\cdot\boldsymbol{v}_h=-\sum_{T\in\mathcal{T}_h}\int_T (\pi_h^0 p) \, \pi_h^0(\nabla\cdot\boldsymbol{v}_h)\\
&=-\sum_{T\in\mathcal{T}_h}\int_T \pi_h^0 p  (\nabla \cdot \boldsymbol{\pi}_h^{\rm RT}\boldsymbol{v}_h)=-\sum_{T\in\mathcal{T}_h}\int_T  p  (\nabla \cdot \boldsymbol{\pi}_h^{\rm RT}\boldsymbol{v}_h)\\
&=-\sum_{T\in\mathcal{T}_h}\int_T  p  (\nabla \cdot \boldsymbol{\pi}_h^{\rm RT}\boldsymbol{v}_h)+\sum_{e\in\mathcal{E}_h^\Gamma}
\int_e\{p\}[\boldsymbol{\pi}_h^{\rm RT}\boldsymbol{v}_h]\cdot\boldsymbol{n}_e\\
&=b_h(\boldsymbol{\pi}_h^{\rm RT}\boldsymbol{v}_h,  p)=b_h(\boldsymbol{\pi}_h^{\rm RT}\boldsymbol{v}_h,  R(\boldsymbol{u}) )+b_h(\boldsymbol{\pi}_h^{\rm RT}\boldsymbol{v}_h,  \hat{p}),
\end{aligned}
\end{equation}
where we have used the fact that $[\boldsymbol{\pi}_h^{\rm RT}\boldsymbol{v}_h]\cdot\boldsymbol{n}_e=0$ on each edge $e\in\mathcal{E}_h^\Gamma$. 

On the other hand, using \eqref{IFE_method}, we obtain
\begin{equation}\label{pro_p_03}
-b_h(\boldsymbol{v}_h, \hat{p}_h)=A_h(\,(\boldsymbol{u}_h,R_h(\boldsymbol{u}_h)), (\boldsymbol{v}_h,R_h(\boldsymbol{v}_h))\, )-l_h(\boldsymbol{v}_h),
\end{equation}
where the two terms on the right-hand side  can be reformulated as 
\begin{equation}\label{pro_p_04}
\begin{aligned}
A_h(\,&(\boldsymbol{u}_h,R_h(\boldsymbol{u}_h)), (\boldsymbol{v}_h,R_h(\boldsymbol{v}_h))\, )\\
&=A_h(\,(\boldsymbol{u},R(\boldsymbol{u})), (\boldsymbol{v}_h,R_h(\boldsymbol{v}_h))\, )+A_h(\,(\boldsymbol{u}_h-\boldsymbol{u},R_h(\boldsymbol{u}_h)-R(\boldsymbol{u})), (\boldsymbol{v}_h,R_h(\boldsymbol{v}_h))\, )\\
&=A_h(\,(\boldsymbol{u},R(\boldsymbol{u})), (\boldsymbol{v}_h,R_h(\boldsymbol{v}_h))\, )-b_h(\boldsymbol{v}_h,\boldsymbol{\pi}_h^0R(\boldsymbol{u}))\\
&\quad+A_h(\,(\boldsymbol{u}_h-\boldsymbol{u},R_h(\boldsymbol{u}_h)+\boldsymbol{\pi}_h^0R(\boldsymbol{u})-R(\boldsymbol{u})), (\boldsymbol{v}_h,R_h(\boldsymbol{v}_h))\,)
\end{aligned}
\end{equation}
and
\begin{equation}\label{pro_p_05}
\begin{aligned}
-l_h(\boldsymbol{v}_h)&=-\int_\Omega \boldsymbol{f}\cdot \boldsymbol{\pi}_h^{\rm RT}\boldsymbol{v}_h=-\sum_{s=\pm}\int_{\Omega^s}(-\nabla\cdot (2\mu\boldsymbol{\epsilon}(\boldsymbol{u})) +\nabla p) \cdot \boldsymbol{\pi}_h^{\rm RT}\boldsymbol{v}_h\\
&=-\sum_{s=\pm}\int_{\Omega^s}(-\nabla\cdot (2\mu\boldsymbol{\epsilon}(\boldsymbol{u})) +\nabla R(\boldsymbol{u})+\nabla\hat{p}) \cdot \boldsymbol{\pi}_h^{\rm RT}\boldsymbol{v}_h\\
&=-\left(\sum_{s=\pm}\int_{\Omega^s}(-\nabla\cdot (2\mu\boldsymbol{\epsilon}(\boldsymbol{u})) +\nabla R(\boldsymbol{u})) \cdot \boldsymbol{\pi}_h^{\rm RT}\boldsymbol{v}_h\right)- b_h(\boldsymbol{\pi}_h^{\rm RT}\boldsymbol{v}_h, \hat{p}).
\end{aligned}
\end{equation}
Combining \eqref{pro_p_02}–\eqref{pro_p_05}, we have
\begin{equation}\label{pro_iii0}
\begin{aligned}
b_h(\boldsymbol{v}_h, &\pi_h^0 p-\hat{p}_h)=b_h(\boldsymbol{\pi}_h^{\rm RT}\boldsymbol{v}_h,  R(\boldsymbol{u}))-b_h(\boldsymbol{v}_h,\boldsymbol{\pi}_h^0R(\boldsymbol{u}))\\
&\quad+A_h(\,(\boldsymbol{u}_h-\boldsymbol{u},R_h(\boldsymbol{u}_h)+\boldsymbol{\pi}_h^0R(\boldsymbol{u})-R(\boldsymbol{u})), (\boldsymbol{v}_h,R_h(\boldsymbol{v}_h))\, )\\
&\quad+A_h(\,(\boldsymbol{u},R(\boldsymbol{u})), (\boldsymbol{v}_h,R_h(\boldsymbol{v}_h))\, )-\sum_{s=\pm}\int_{\Omega^s}(-\nabla\cdot (2\mu\boldsymbol{\epsilon}(\boldsymbol{u})) +\nabla R(\boldsymbol{u})) \cdot \boldsymbol{\pi}_h^{\rm RT}\boldsymbol{v}_h\\
&:=(\mathrm{III})_1+(\mathrm{III})_2+(\mathrm{III})_3.
\end{aligned}
\end{equation}
By Lemma~\ref{lem_cons_0}, the third term can be estimated as 
\begin{equation}\label{pro_iii1}
|(\mathrm{III})_3|\leq Ch\|\boldsymbol{u}\|_{H^2(\cup \Omega^\pm)} \|\boldsymbol{v}_h\|_{1,h}.
\end{equation}
To estimate the second term, we use \eqref{bd_Ah}, \eqref{norm_equal}, and \eqref{error_1} to obtain
\[
\begin{aligned}
|(\mathrm{III})_2|&\leq C\left(\interleave \boldsymbol{u}_h-\boldsymbol{u}  \interleave_{*,h}+\|R_h(\boldsymbol{u}_h)+\pi_{h}^0 R(\boldsymbol{u})-R(\boldsymbol{u})\|_{*,pre}\right)\|\boldsymbol{v}_h\|_{1,h}\\
&\leq C\left(h\| \boldsymbol{u}\|_{H^2(\cup \Omega^\pm)}+\|R_h(\boldsymbol{u}_h)+\pi_{h}^0 R(\boldsymbol{u})-R(\boldsymbol{u})\|_{*,pre}\right)\|\boldsymbol{v}_h\|_{1,h}.
\end{aligned}
\]
By \eqref{iter_err_new2}, \eqref{norm_equal}, \eqref{error_1} and \eqref{iter_err_new1}, we further have
\[
\begin{aligned}
\|R_h(\boldsymbol{u}_h)+\pi_{h}^0 R(\boldsymbol{u})-R(\boldsymbol{u})\|_{*,pre}
&\leq \|R_h(\boldsymbol{\pi}_h^{\rm IFE}\boldsymbol{u})+\pi_{h}^0 R(\boldsymbol{u})-R(\boldsymbol{u})\|_{*,pre}+\|R_h(\boldsymbol{u}_h-\boldsymbol{\pi}_h^{\rm IFE}\boldsymbol{u})\|_{*,pre}\\
&\leq Ch\| \boldsymbol{u}\|_{H^2(\cup \Omega^\pm)}+C_2\|\boldsymbol{u}_h-\boldsymbol{\pi}_h^{\rm IFE}\boldsymbol{u}\|_{1,h}\\
&\leq Ch\| \boldsymbol{u}\|_{H^2(\cup \Omega^\pm)}+C_2\|\boldsymbol{u}_h-\boldsymbol{u}\|_{1,h}+C_2\|\boldsymbol{u}-\boldsymbol{\pi}_h^{\rm IFE}\boldsymbol{u}\|_{1,h}\\
&\leq Ch\| \boldsymbol{u}\|_{H^2(\cup \Omega^\pm)}.
\end{aligned}
\]
Thus, the second term admits the estimate
\begin{equation}\label{pro_iii2}
|(\mathrm{III})_2|\leq Ch\|\boldsymbol{u}\|_{H^2(\cup \Omega^\pm)} \|\boldsymbol{v}_h\|_{1,h}.
\end{equation}
We now consider the first term $(\mathrm{III})_1$. Using the definition of the bilinear form $b_h(\cdot,\cdot)$ in \eqref{def_Ah}, and the facts that $[\boldsymbol{\pi}_h^{\rm RT}\boldsymbol{v}_h]\cdot\boldsymbol{n}_e = 0$ and $\{\boldsymbol{\pi}_h^0 R(\boldsymbol{u})\}$ is constant on each edge $e\in \mathcal{E}_h^\Gamma$, we obtain
\[
\begin{aligned}
(\mathrm{III})_1&=b_h(\boldsymbol{\pi}_h^{\rm RT}\boldsymbol{v}_h,  R(\boldsymbol{u}))-b_h(\boldsymbol{v}_h,\boldsymbol{\pi}_h^0R(\boldsymbol{u}))\\
&=-\sum_{T\in\mathcal{T}_h}\int_T R(\boldsymbol{u})\nabla\cdot\boldsymbol{\pi}_h^{\rm RT}\boldsymbol{v}_h+\sum_{T\in\mathcal{T}_h}\int_T (\boldsymbol{\pi}_h^0R(\boldsymbol{u}))\nabla\cdot\boldsymbol{v}_h.
\end{aligned}
\]
By the commutativity property \eqref{commu_RT},  the $L^2$-projection property, and the fact that $\nabla\cdot\boldsymbol{v}_h\in Y_h$, we have
\begin{equation}\label{pro_iii3}
\begin{aligned}
(\mathrm{III})_1&=-\sum_{T\in\mathcal{T}_h}\int_T R(\boldsymbol{u})\pi_h^0(\nabla\cdot \boldsymbol{v}_h)+\sum_{T\in\mathcal{T}_h}\int_T R(\boldsymbol{u})\nabla\cdot\boldsymbol{v}_h\\
&=-\sum_{T\in\mathcal{T}_h}\int_T R(\boldsymbol{u})\nabla\cdot \boldsymbol{v}_h+\sum_{T\in\mathcal{T}_h}\int_T R(\boldsymbol{u})\nabla\cdot\boldsymbol{v}_h\\
&=0.
\end{aligned}
\end{equation}
Combining \eqref{pro_iii0}-\eqref{pro_iii3}, we arrive at
\[
b_h(\boldsymbol{v}_h, \pi_h^0 p-\hat{p}_h)\leq Ch\|\boldsymbol{u}\|_{H^2(\cup \Omega^\pm)} \|\boldsymbol{v}_h\|_{1,h},
\]
which together with \eqref{pro_p_01} yields the desired estimate \eqref{error_2}.

iii)
Noting that $p_h=R_h(\boldsymbol{u}_h)+\hat{p}_h$, we apply the triangle inequality and \eqref{error_2} to obtain
\[
\begin{aligned}
\|p-p_h\|_{L^2(\Omega)}&=\|p-R_h(\boldsymbol{u}_h)-\pi_h^0p+\pi_h^0p-\hat{p}_h\|_{L^2(\Omega)}\\
&\leq \|p-R_h(\boldsymbol{u}_h)-\pi_h^0p\|_{L^2(\Omega)}+\|\pi_h^0p-\hat{p}_h\|_{L^2(\Omega)}\\
&\leq \|p-R_h(\boldsymbol{u}_h)-\pi_h^0p\|_{L^2(\Omega)}+Ch\|\boldsymbol{u}\|_{H^2(\cup \Omega^\pm)}.
\end{aligned}
\]
To estimate the first term on the right-hand side, we use \eqref{norm_equal}, \eqref{ultimate_ve1},  \eqref{ultimate_pr1} and \eqref{error_1} to obtain
\[
\begin{aligned}
\|p-R_h(\boldsymbol{u}_h)-\pi_h^0p\|_{L^2(\Omega)}&\leq \|p-R_h(\boldsymbol{\pi}_h^{\rm IFE}\boldsymbol{u})-\pi_h^0p\|_{L^2(\Omega)}+ \|R_h(\boldsymbol{\pi}_h^{\rm IFE}\boldsymbol{u}-\boldsymbol{u}_h)\|_{L^2(\Omega)}\\
&\leq \|p-R_h(\boldsymbol{\pi}_h^{\rm IFE}\boldsymbol{u})-\pi_h^0p\|_{L^2(\Omega)} +C_2 \|\boldsymbol{\pi}_h^{\rm IFE}\boldsymbol{u}-\boldsymbol{u}_h\|_{1,h}\\
&\leq \|p-R_h(\boldsymbol{\pi}_h^{\rm IFE}\boldsymbol{u})-\pi_h^0p\|_{L^2(\Omega)} +C_2 (\|\boldsymbol{\pi}_h^{\rm IFE}\boldsymbol{u}-\boldsymbol{u}\|_{1,h}+\|\boldsymbol{u}-\boldsymbol{u}_h\|_{1,h})\\
&\leq Ch(\|\boldsymbol{u}\|_{H^2(\cup \Omega^\pm)}+\|p\|_{H^1(\cup \Omega^\pm)}).
\end{aligned}
\]
Combining the above results completes the proof of the error estimate \eqref{error_3}.
\end{proof}

\section{Extension to Problems with Surface Tension}\label{sec_exten}

In this section, we consider the Stokes interface problem \eqref{originalpb0} in which the jump condition
\eqref{jp_cond1} is replaced by
\begin{equation}\label{nonhom_jump}
[\boldsymbol{\sigma}(\mu,\boldsymbol{u},p)\boldsymbol{n}]_\Gamma
= g\,\boldsymbol{n}
\qquad \text{on } \Gamma,
\end{equation}
where $g$ is a given function defined on the interface $\Gamma$.
 To model the surface tension according to Laplace’s law, the function $g$ is given by
$g = \gamma H_\Gamma$, where $\gamma$ is the surface tension coefficient and $H_\Gamma$ is the curvature of the interface~$\Gamma$ (see \cite{piccardo2023surface}). 
In this work, we consider a general function $g$ and assume that $g\in H^{1/2}(\Gamma)$. 
We note that the regularity property
\(H_\Gamma\in H^{1/2}(\Gamma)\) requires additional smoothness
assumptions on the interface.

\subsection{Correction Function at the Continuous Level}
From a theoretical point of view, the nonhomogeneous jump condition
can be treated in the same manner as a nonhomogeneous Dirichlet
boundary condition. That is, we construct a correction function at the continuous level that
satisfies the prescribed nonhomogeneous jump condition.
More precisely, given $g \in H^{1/2}(\Gamma)$, there exists
$w^- \in H^1(\Omega^-)$ such that
\[
w^- = g \quad \text{on } \Gamma\quad \text{ and }
\quad
\|w^-\|_{H^1(\Omega^-)}
\le C \|g\|_{H^{1/2}(\Gamma)}.
\]
Let $w^+ = 0$, and define $w$ by $w|_{\Omega^\pm} = w^\pm.$
We then define
\[
p^J(g) = w - \frac{1}{|\Omega|}\int_{\Omega} w \, dx,
\]
and, for brevity, write $p^J:=p^J(g)$ when no confusion arises.  Therefore,
\[
p^J \in L_0^2(\Omega)\quad \text{ and } \quad [p^J]_\Gamma = -g.
\]
Moreover, we have
\begin{equation}\label{esti_pJ}
\|p^J\|_{H^1(\cup \Omega^\pm)} \le C \|g\|_{H^{1/2}(\Gamma)}.
\end{equation}
By construction,
\[
[\boldsymbol{\sigma}(\mu,\boldsymbol{0},p^J)\boldsymbol{n}]_\Gamma=-[p^J]_\Gamma \,\boldsymbol{n}=g\,\boldsymbol{n}.
\]
Let $p^\star = p - p^J$. Then $p^\star \in L_0^2(\Omega)$  and the jump condition
\eqref{nonhom_jump} is transformed into 
\begin{equation}\label{jump_trans}
[\boldsymbol{\sigma}(\mu,\boldsymbol{u},p^\star)\boldsymbol{n}]_\Gamma
=
\boldsymbol{0}.
\end{equation}
Accordingly, equation \eqref{originalpb1} becomes
\[
-\nabla\cdot \boldsymbol{\sigma}(\mu,\boldsymbol{u},p^\star)
=
\boldsymbol{f}
-
\nabla p^J
\qquad \text{in } \Omega^+ \cup \Omega^- .
\]
Therefore, the proposed method can be applied to this modified problem.

\subsection{Discrete Correction Function}
While the above construction is convenient for theoretical analysis,
it is not employed in practical computations, since constructing
$p^J$ requires solving an additional boundary-value problem. 
In the discrete scheme, we use a discrete correction function $p_h^J$, supported only on interface elements, to enforce the jump condition at the discrete level, without solving an additional boundary-value problem. 
Next, we describe the construction of \( p_h^J \) proposed in \cite{zhang2023unfitted}.  
For each interface element \( T \in \mathcal{T}_h^{\Gamma} \), let \( \Gamma_T^{\mathrm{ext}} \) be a portion of \( \Gamma \) containing \( \Gamma_T \) such that \(C_1h  \leq |\Gamma_T^{\mathrm{ext}}| \leq C_2 h \), where $C_1$ and $C_2$ are constants  independent of $h$ and the position of the interface relative to the mesh. 
Given an interface element \(T\in\mathcal{T}_h^\Gamma\), define
\[
w^-=\frac{1}{|\Gamma_T^{\mathrm{ext}}|}
\int_{\Gamma_T^{\mathrm{ext}}} g\,ds,
\qquad
w^+=0,
\]
and let \(w\) be the piecewise constant function defined by
\(
w|_{T_h^\pm}=w^\pm .
\)
We then define
\[
p_h^J(g)|_T
=
w-\frac{1}{|T|}\int_T w\,dx .
\]
On each non-interface element \( T \in \mathcal{T}_h^{non} \), we set \( p_h^J(g)|_{T} = 0 \). Also, for brevity, we write $p_h^J:=p_h^J(g)$ when no confusion arises.
It is straightforward to verify that \( p_h^J \in L_0^2(\Omega) \) and that $p_h^J$ satisfies a discrete interface jump condition approximating that of \( p^J \).  Combining  Theorem 4.8 in \cite{zhang2023unfitted} with \eqref{esti_pJ}, we obtain the following approximation estimate:
\begin{equation}\label{appro_p_h^J}
\|p^J-(\pi_h^0p^J+p_h^J)\|_{L^2(\Omega)}\leq Ch\|p^J\|_{H^1(\cup \Omega^\pm)}\leq Ch \|g\|_{H^{1/2}(\Gamma)}.
\end{equation}
By a slight modification of the proof of Theorem 4.8 in \cite{zhang2023unfitted}, we also obtain
\begin{equation}\label{appro_p_h^J_pm}
\sum_{T\in\mathcal{T}_h^\Gamma}\|(p^J)_E^\pm-(\pi_h^0p^J+p_h^J)^\pm \|^2_{L^2(T)} \leq Ch^2 \|g\|^2_{H^{1/2}(\Gamma)}.
\end{equation}

\subsection{The Pressure-Robust Method}

By \eqref{jump_trans}, we know that $(\boldsymbol{u},p^\star) \in \widetilde{\boldsymbol{H}^2H^1}\cap (\boldsymbol{V}\times Q)$. Based on the space decomposition \eqref{decomp_con}, we then have
\begin{equation}\label{deco_p}
p=p^\star+p^J=\hat{p}+R(\boldsymbol{u})+p^J \quad \text{ with }\quad \hat{p}\in Y.
\end{equation}
Similar to \eqref{weakform1} for the case \( g = 0 \), the exact solutions \( \boldsymbol{u} \in \boldsymbol{X} \) and
\(
p = \hat{p} + R(\boldsymbol{u}) + p^J \text{ with } \hat{p} \in Y
\) for the case \( g \neq 0 \) satisfy the following relations:
\begin{equation}\label{weak_surface}
\begin{aligned}
A\big((\boldsymbol{u}, R(\boldsymbol{u})), (\boldsymbol{v}, R(\boldsymbol{v}))\big) + b(\boldsymbol{v}, \hat{p})
&= l(\boldsymbol{v})+l^g(\boldsymbol{v})-b(\boldsymbol{v}, p^J) \quad &&\forall \, \boldsymbol{v} \in \boldsymbol{X},\\
b(\boldsymbol{u}, \hat{q})
&= 0 \quad &&\forall \, \hat{q} \in Y,
\end{aligned}
\end{equation}
where $l^g(\boldsymbol{v})=-\int_\Gamma g\,\boldsymbol{n}\cdot \boldsymbol{v}$.

Accordingly,  the pressure-robust IFE method  for \( g \neq 0 \) reads: find $(\boldsymbol{u}_h, p_h)$, with $p_h=R_h(\boldsymbol{u}_h)+\hat{p}_h+p_h^J$ and $(\boldsymbol{u}_h, \hat{p}_h)\in  \boldsymbol{X}_h^{\rm IFE}\times Y_h$, such that 
\begin{equation}\label{IFE_method_surface}
\begin{aligned}
A_h(\,(\boldsymbol{u}_h,R_h(\boldsymbol{u}_h)), (\boldsymbol{v}_h,R_h(\boldsymbol{v}_h))\, )+ b_h(\boldsymbol{v}_h, \hat{p}_h)&=l_h^{\rm tot}(\boldsymbol{v}_h) \quad &&\forall \,  \boldsymbol{v}_h\in \boldsymbol{X}_h^{\rm IFE},\\
b_h(\boldsymbol{u}_h,\hat{q}_h)&=0\quad &&\forall\,   \hat{q}_h\in Y_h,
\end{aligned}
\end{equation}
where the total load functional on the right-hand side is defined by
\begin{equation}\label{l_tot}
l_h^{\rm tot}(\boldsymbol{v}_h)=l_h(\boldsymbol{v}_h)+l^g(\boldsymbol{v}_h)-b_h(\boldsymbol{v}_h, p_h^J).
\end{equation}

Note that if $g=0$, then $p_h^J=p^J=0$, and both the above weak formulation \eqref{weak_surface} and the discrete scheme \eqref{IFE_method_surface} reduce to the case without surface tension.

\subsection{Error Estimates}
We first establish the following consistency result, analogous to Lemma~\ref{lem_consis}.
\begin{lemma}
Assume that the exact velocity of problem~(\ref{weak_surface}) satisfies $\boldsymbol{u}\in H^2(\cup \Omega^\pm)^2$. For the discrete scheme~\eqref{IFE_method_surface}, there exists a constant \( C \), independent of \( h \) and the position of the interface relative to the mesh, such that
\[
\sup_{\boldsymbol{w}_h \in \boldsymbol{Z}_h}
\frac{
\left|
A_h\big((\boldsymbol{u}, R(\boldsymbol{u})), (\boldsymbol{w}_h, R_h(\boldsymbol{w}_h))\big)
- l_h^{\rm tot}(\boldsymbol{w}_h)
\right|
}{
\|\boldsymbol{w}_h\|_{1,h}
}
\le C h ( \|\boldsymbol{u}\|_{H^2(\cup\Omega^\pm)} + \|g\|_{H^{1/2}(\Gamma)} ).
\]
\end{lemma}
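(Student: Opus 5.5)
The plan is to reduce to Lemma~\ref{lem_cons_0} by rewriting $l_h^{\rm tot}(\boldsymbol{w}_h)$ for $\boldsymbol{w}_h\in\boldsymbol{Z}_h$, in the same way as in the proof of Lemma~\ref{lem_consis}. The pair $(\boldsymbol{u},p^\star)$ with $p^\star=R(\boldsymbol{u})+\hat p$ satisfies the homogeneous condition \eqref{jump_trans} and $-\nabla\cdot\boldsymbol{\sigma}(\mu,\boldsymbol{u},p^\star)=\boldsymbol{f}-\nabla p^J$ in $\Omega^\pm$. Hence $\boldsymbol{u}\in\boldsymbol{X}$ and Lemma~\ref{lem_cons_0} applies verbatim.

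First I write $\boldsymbol{f}=-\nabla\cdot(2\mu\boldsymbol{\epsilon}(\boldsymbol{u}))+\nabla R(\boldsymbol{u})+\nabla\hat p+\nabla p^J$ piecewise. Since $\hat p\in H^1(\Omega)$, and by \eqref{w_h_RT} together with the vanishing normal trace of $\boldsymbol{\pi}_h^{\rm RT}\boldsymbol{w}_h$ on $\partial\Omega$, the $\hat p$ term drops exactly as before. Thus
\[
A_h\big((\boldsymbol{u},R(\boldsymbol{u})),(\boldsymbol{w}_h,R_h(\boldsymbol{w}_h))\big)-l_h^{\rm tot}(\boldsymbol{w}_h)=(\mathrm{I})-(\mathrm{II}),
\]
where $(\mathrm{I})$ is precisely the quantity bounded in Lemma~\ref{lem_cons_0}, giving $Ch\|\boldsymbol{u}\|_{H^2(\cup\Omega^\pm)}\|\boldsymbol{w}_h\|_{1,h}$, and
\[
(\mathrm{II}):=\sum_{s=\pm}\int_{\Omega^s}\nabla p^J\cdot\boldsymbol{\pi}_h^{\rm RT}\boldsymbol{w}_h+l^g(\boldsymbol{w}_h)-b_h(\boldsymbol{w}_h,p_h^J).
\]

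Next I integrate $(\mathrm{II})$ by parts on $\Omega^\pm$. The normal trace of $\boldsymbol{\pi}_h^{\rm RT}\boldsymbol{w}_h$ is continuous across $\Gamma$, vanishes on $\partial\Omega$, and the field is divergence free. Using $[p^J]_\Gamma=-g$, this gives
\[
\sum_{s=\pm}\int_{\Omega^s}\nabla p^J\cdot\boldsymbol{\pi}_h^{\rm RT}\boldsymbol{w}_h=\int_\Gamma g\,\boldsymbol{n}\cdot\boldsymbol{\pi}_h^{\rm RT}\boldsymbol{w}_h=-l^g(\boldsymbol{\pi}_h^{\rm RT}\boldsymbol{w}_h).
\]
Therefore
\[
(\mathrm{II})=l^g(\boldsymbol{w}_h-\boldsymbol{\pi}_h^{\rm RT}\boldsymbol{w}_h)-b_h(\boldsymbol{w}_h,p_h^J).
\]
I then split $b_h(\boldsymbol{w}_h,p_h^J)=b_h(\boldsymbol{w}_h,p_h^J+\pi_h^0p^J-p^J)+b_h(\boldsymbol{w}_h,p^J)-b_h(\boldsymbol{w}_h,\pi_h^0p^J)$. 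The last term vanishes because $\pi_h^0p^J\in Y_h$ and $\boldsymbol{w}_h\in\boldsymbol{Z}_h$. The first term is bounded by $C\|p^J-\pi_h^0p^J-p_h^J\|_{*,pre}\|\boldsymbol{w}_h\|_{1,h}$. Its $L^2$ part is controlled by \eqref{appro_p_h^J}; its interface-edge part $h_e\|\{\cdot\}\|_{L^2(e)}^2$ is handled by splitting $e=e^+\cup e^-$, applying the trace inequality to $(p^J)_E^\pm-(\pi_h^0p^J+p_h^J)^\pm$ (polynomial pieces), and using \eqref{appro_p_h^J_pm} and \eqref{esti_pJ}. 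Together this gives $Ch\|g\|_{H^{1/2}(\Gamma)}\|\boldsymbol{w}_h\|_{1,h}$. The middle term $b_h(\boldsymbol{w}_h,p^J)$ I integrate by parts elementwise. Since $\nabla_h\cdot\boldsymbol{w}_h=0$, it equals a sum of edge terms $\int_e\{p^J\}[\boldsymbol{w}_h]\cdot\boldsymbol{n}_e$ over non-interface edges, which are standard $O(h)$ consistency terms bounded by $\|p^J\|_{H^1(\cup\Omega^\pm)}$. The remaining piece is the interface term $\int_\Gamma [p^J]\boldsymbol{w}_h\cdot\boldsymbol{n}=-\int_\Gamma g\,\boldsymbol{n}\cdot\boldsymbol{w}_h=l^g(\boldsymbol{w}_h)$, up to an orientation sign. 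After combining, the $l^g(\boldsymbol{w}_h)$ pieces cancel, and $(\mathrm{II})$ reduces to $-l^g(\boldsymbol{\pi}_h^{\rm RT}\boldsymbol{w}_h)$ plus $O(h)$ terms. This remainder is the same as the surface integral obtained above by integrating by parts, which should make it consistent.

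The main obstacle is the bookkeeping of these $\Gamma$ integrals with correct signs. In particular, I must make sure that the leftover combination is either exactly zero or of the form $\int_\Gamma g\,\boldsymbol{n}\cdot(\boldsymbol{w}_h-\boldsymbol{\pi}_h^{\rm RT}\boldsymbol{w}_h)$. If the latter survives, I would bound it using $\|\boldsymbol{w}_h-\boldsymbol{\pi}_h^{\rm RT}\boldsymbol{w}_h\|_{L^2(\Gamma)}$ summed over interface elements via a trace inequality on $\Gamma_T$. This yields $Ch^{1/2}$ times local norms, giving only $O(h^{1/2})$ with $\|g\|_{L^2(\Gamma)}$. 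To get full order I would instead first rewrite $\int_\Gamma g\,\boldsymbol{n}\cdot\boldsymbol{v}=-\sum_{s=\pm}\int_{\Omega^s}\nabla\cdot(p^J\boldsymbol{v})$ with $p^J$ replaced by the $H^1(\Omega^-)$ extension $w^-$, which turns the surface term into volume terms $\int_{\Omega^-}(\nabla w^-\cdot\boldsymbol{v}+w^-\nabla_h\!\cdot\boldsymbol{v})$ plus edge terms. Applied to $\boldsymbol{v}=\boldsymbol{w}_h-\boldsymbol{\pi}_h^{\rm RT}\boldsymbol{w}_h$ (divergence free elementwise), the volume part is bounded by $\|\nabla w^-\|_{L^2}\,Ch\|\boldsymbol{w}_h\|_{1,h}$. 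The edge terms are the standard CR ones, and together these give the claimed $Ch\|g\|_{H^{1/2}(\Gamma)}$.
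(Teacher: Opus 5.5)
There is a genuine gap in your evaluation of $b_h(\boldsymbol{w}_h,p^J)$. Integrating by parts on each $T\cap\Omega^\pm$, using $[p^J]_\Gamma=-g$ and that $p^J$ is single-valued on edges, gives for every $\boldsymbol{w}_h\in\boldsymbol{X}_h^{\rm IFE}$
\[
b_h(\boldsymbol{w}_h,p^J)=\sum_{s=\pm}\int_{\Omega^s}\nabla p^J\cdot\boldsymbol{w}_h-\sum_{e\in\mathcal{E}_h^{non}}\int_e\{p^J\}[\boldsymbol{w}_h]\cdot\boldsymbol{n}_e+l^g(\boldsymbol{w}_h).
\]
You dropped the volume term $\sum_{s}\int_{\Omega^s}\nabla p^J\cdot\boldsymbol{w}_h$, which is $O(1)$. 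The condition $\nabla_h\cdot\boldsymbol{w}_h=0$ removes $\int_T p^J\nabla\cdot\boldsymbol{w}_h$, not this term; once that is removed, $b_h(\boldsymbol{w}_h,p^J)$ is just the sum over the \emph{interface} edges. Because of this omission, your bookkeeping leaves the remainder $-l^g(\boldsymbol{\pi}_h^{\rm RT}\boldsymbol{w}_h)=\sum_{s}\int_{\Omega^s}\nabla p^J\cdot\boldsymbol{\pi}_h^{\rm RT}\boldsymbol{w}_h$, which is not small. Saying it ``should make it consistent'' does not close the argument.

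With the volume term restored, and with your splitting of $p_h^J$ via $\pi_h^0p^J$, one gets exactly
\[
(\mathrm{II})=\sum_{s=\pm}\int_{\Omega^s}\nabla p^J\cdot(\boldsymbol{\pi}_h^{\rm RT}\boldsymbol{w}_h-\boldsymbol{w}_h)+\sum_{e\in\mathcal{E}_h^{non}}\int_e\{p^J\}[\boldsymbol{w}_h]\cdot\boldsymbol{n}_e+b_h(\boldsymbol{w}_h,p^J-\pi_h^0p^J-p_h^J).
\]
All three terms are $O(h)\|g\|_{H^{1/2}(\Gamma)}\|\boldsymbol{w}_h\|_{1,h}$. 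The first follows from $\|\boldsymbol{\pi}_h^{\rm RT}\boldsymbol{w}_h-\boldsymbol{w}_h\|_{L^2(\Omega)}\le Ch\|\boldsymbol{w}_h\|_{1,h}$ and \eqref{esti_pJ}; the other two go as you describe. This is the paper's proof in a different order. The paper never moves $\nabla p^J\cdot\boldsymbol{\pi}_h^{\rm RT}\boldsymbol{w}_h$ onto $\Gamma$. Instead it first writes $\boldsymbol{\pi}_h^{\rm RT}\boldsymbol{w}_h=\boldsymbol{w}_h+(\boldsymbol{\pi}_h^{\rm RT}\boldsymbol{w}_h-\boldsymbol{w}_h)$. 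Then the $\Gamma$-integral from integrating $\nabla p^J\cdot\boldsymbol{w}_h$ by parts cancels $l^g(\boldsymbol{w}_h)$ exactly, and the RT defect appears only in the volume term $(\mathrm{IV})_2$.

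Your fallback would not repair the argument either. Take $\boldsymbol{v}=\boldsymbol{w}_h-\boldsymbol{\pi}_h^{\rm RT}\boldsymbol{w}_h$. The edge terms produced by your $w^-$ rewriting are not all standard Crouzeix--Raviart terms: on each interface edge they are partial-edge integrals $\int_{e\cap\Omega^-}w^-[\boldsymbol{w}_h]\cdot\boldsymbol{n}_e$. The cancellation $\int_e[\boldsymbol{w}_h]=\boldsymbol{0}$ holds only on the whole edge, so you cannot subtract an edge mean. The best available bound, $\|w^-\|_{L^2(e)}\|[\boldsymbol{w}_h]\|_{L^2(e)}$ summed over $\mathcal{E}_h^\Gamma$ together with the strip estimate, is only $Ch^{1/2}\|w^-\|_{H^1(\Omega^-)}\|\boldsymbol{w}_h\|_{1,h}$. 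These partial-edge terms become $O(h)$ only after being combined with the interface-edge part of $b_h(\boldsymbol{w}_h,p_h^J)$. That combination is exactly what the term $b_h(\boldsymbol{w}_h,p^J-\pi_h^0p^J-p_h^J)$, the paper's $(\mathrm{IV})_3$, accomplishes.
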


\begin{proof}
Similar to \eqref{pro_lh_er}, using \eqref{deco_p}, we obtain
\[
l_h(\boldsymbol{w}_h)=\int_\Omega \boldsymbol{f}\cdot \boldsymbol{\pi}_h^{\rm RT}\boldsymbol{w}_h=\sum_{s=\pm}\int_{\Omega^s}-\nabla\cdot \boldsymbol{\sigma}(\mu,\boldsymbol{u}, R(\boldsymbol{u})+ p^J)\cdot (\boldsymbol{w}_h+(\boldsymbol{\pi}_h^{\rm RT}\boldsymbol{w}_h-\boldsymbol{w}_h)).
\]
Then, by \eqref{l_tot} and the triangle inequality, we have
\begin{equation}\label{pro_decom_IV}
\begin{aligned}
&\left|A_h(\,(\boldsymbol{u}, R(\boldsymbol{u})), (\boldsymbol{w}_h,R_h(\boldsymbol{w}_h))\,)-l_h^{\rm tot}(\boldsymbol{w}_h))\right|
\\
&\quad \leq \left| A_h(\cdot,\cdot)- \left(\sum_{s=\pm}\int_{\Omega^s}-\nabla\cdot \boldsymbol{\sigma}(\mu,\boldsymbol{u}, R(\boldsymbol{u})+ p^J)\cdot \boldsymbol{w}_h+l^g(\boldsymbol{w}_h)-b_h(\boldsymbol{w}_h, p^J) \right)\right|\\
&\qquad + \left| \sum_{s=\pm}\int_{\Omega^s}-\nabla\cdot \boldsymbol{\sigma}(\mu,\boldsymbol{u}, R(\boldsymbol{u})+ p^J)\cdot (\boldsymbol{\pi}_h^{\rm RT}\boldsymbol{w}_h-\boldsymbol{w}_h) \right|\\
&\qquad +\left| b_h(\boldsymbol{w}_h, p^J)-b_h(\boldsymbol{w}_h, p_h^J)\right|\\
&\quad := (\mathrm{IV})_1 + (\mathrm{IV})_2+ (\mathrm{IV})_3,
\end{aligned}
\end{equation}
where $A_h(\cdot,\cdot):=A_h(\,(\boldsymbol{u}, R(\boldsymbol{u})), (\boldsymbol{w}_h,R_h(\boldsymbol{w}_h))\,).$
Integration by parts yields
\[
 \sum_{s=\pm}\int_{\Omega^s}-\nabla\cdot \boldsymbol{\sigma}(\mu,\boldsymbol{0},  p^J)\cdot \boldsymbol{w}_h+l^g(\boldsymbol{w}_h)-b_h(\boldsymbol{w}_h, p^J)=\sum_{e\in\mathcal{E}_h^{non}}\int_e\{p^J\}[\boldsymbol{w}_h]\cdot\boldsymbol{n}_e.
\]
Recalling the term \((\mathrm{I})_1\) in \eqref{pro_deco_Ah} and using the fact that \(\int_e [\boldsymbol{w}_h]=\boldsymbol{0}\), we have
\[
\begin{aligned}
 (\mathrm{IV})_1&\leq (\mathrm{I})_1 + \sum_{e\in\mathcal{E}_h^{non}}\int_e\left| \{p^J\}[\boldsymbol{w}_h]\cdot\boldsymbol{n}_e \right| \\
 &\leq Ch \|\boldsymbol{u}\|_{H^2(\cup\Omega^\pm)} \|\boldsymbol{w}_h\|_{1,h}+Ch\|p^J\|_{H^1(\cup \Omega^\pm)}\|\boldsymbol{w}_h\|_{1,h}\\
 &\leq Ch (\|\boldsymbol{u}\|_{H^2(\cup\Omega^\pm)}+\|g\|_{H^{1/2}(\Gamma)}) \|\boldsymbol{w}_h\|_{1,h}.
 \end{aligned}
\]
Following the same argument as in the estimate of term $(\mathrm{I})_2$ in \eqref{pro_deco_Ah}, we obtain
\[
 (\mathrm{IV})_2\leq Ch( \|\boldsymbol{u}\|_{H^2(\cup\Omega^\pm)} + \|g\|_{H^{1/2}(\Gamma)} )\|\boldsymbol{w}_h\|_{1,h}.
\]
For the third term, similar to \eqref{pro_bh_pi}, we have $b_h(\boldsymbol{w}_h, \pi_h^0p^J)=0.$
Therefore, 
\[
\begin{aligned}
(\mathrm{IV})_3&=\left| b_h(\boldsymbol{w}_h, p^J-(\pi_h^0p^J+p_h^J))\right|\leq \sum_{e\in\mathcal{E}_h^\Gamma}\left| \int_e \{ p^J-(\pi_h^0p^J+p_h^J) \} [\boldsymbol{w}_h]\cdot \boldsymbol{n}_e\right|,
\end{aligned}
\]
where we used  \eqref{w_h_eq_0} since $\boldsymbol{w}_h \in \boldsymbol{Z}_h$.
By \eqref{appro_p_h^J_pm} and the same argument as in Lemma~\ref{ener_app}, we have
\[
\sum_{e\in\mathcal{E}_h^\Gamma}\|\{ p^J-(\pi_h^0p^J+p_h^J) \}\|^2_{L^2(e)}\leq Ch \|g\|^2_{H^{1/2}(\Gamma)}.
\]
Moreover, since \(\boldsymbol{w}_h|_T \in H^1(T)\), we have (cf. (4.6) in \cite{ji2023analysis}) 
\[
\sum_{e\in\mathcal{E}_h^\Gamma} \|[\boldsymbol{w}_h]\|^2_{L^2(e)}\leq Ch\|\boldsymbol{w}_h\|^2_{1,h}.
\]
Combining the above results and applying the Cauchy--Schwarz inequality, we conclude that
\[
(\mathrm{IV})_3\leq Ch \|g\|_{H^{1/2}(\Gamma)}\|\boldsymbol{w}_h\|_{1,h}.
\]
Substituting the estimates of \((\mathrm{IV})_1\)--\((\mathrm{IV})_3\) into \eqref{pro_decom_IV}, we arrive at the desired estimate.
\end{proof}

With the above consistency result, we can prove the following error estimates, extending Theorem~\ref{theo_ulti} to the case $g\neq 0$. The proof follows the same lines as that of Theorem~\ref{theo_ulti} and is therefore omitted.
\begin{theorem}\label{theo_ulti_nonhom}
Let $(\boldsymbol{u}, p)$ be the exact solution of  problem \eqref{originalpb0} with 
\eqref{jp_cond1} replaced by \eqref{nonhom_jump}.  Assume that $\boldsymbol{u}\in H^2(\cup \Omega^\pm)^2$ and $p\in H^1(\cup \Omega^\pm)$.
Let $(\boldsymbol{u}_h, p_h)$ be the solution of  \eqref{IFE_method_surface} with $p_h=R_h(\boldsymbol{u}_h)+\hat{p}_h+p_h^J$. We have
\begin{align}
\|\boldsymbol{u}-\boldsymbol{u}_h\|_{1,h}&\leq C\|\boldsymbol{u}-\boldsymbol{u}_h\|_{*,h}\leq Ch( \|\boldsymbol{u}\|_{H^2(\cup\Omega^\pm)} + \|g\|_{H^{1/2}(\Gamma)} ),\label{nohom_error_1}\\
\|\pi_h^0p-\hat{p}_h\|_{L^2(\Omega)}&=\|\pi_h^0 (p- p_h^J)-\hat{p}_h\|_{L^2(\Omega)}\leq Ch( \|\boldsymbol{u}\|_{H^2(\cup\Omega^\pm)} + \|g\|_{H^{1/2}(\Gamma)} ),\label{nohom_error_2}\\
\|p-p_h\|_{L^2(\Omega)}&\leq Ch(\|\boldsymbol{u}\|_{H^2(\cup \Omega^\pm)}+\|p\|_{H^1(\cup \Omega^\pm)}+ \|g\|_{H^{1/2}(\Gamma)} ),\label{nohom_error_3}
\end{align}
where  the constant $C > 0$ is independent of $h$ and the position of the interface relative to the mesh.
\end{theorem}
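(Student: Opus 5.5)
We follow the proof of Theorem~\ref{theo_ulti} step by step, with $p$ replaced by $p^\star=p-p^J$ and the load functional replaced by $l_h^{\rm tot}$.

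\emph{Velocity estimate \eqref{nohom_error_1}.} Since $(\boldsymbol{u},p^\star)\in\widetilde{\boldsymbol{H}^2H^1}\cap(\boldsymbol{V}\times Q)$, we have $\boldsymbol{u}\in\boldsymbol{X}$. Because $\nabla\cdot\boldsymbol{u}=0$, Lemma~\ref{lem_commu} still gives $\boldsymbol{\pi}_h^{\rm IFE}\boldsymbol{u}\in\boldsymbol{Z}_h$. Testing \eqref{IFE_method_surface} with $\boldsymbol{w}_h\in\boldsymbol{Z}_h$ removes $\hat p_h$ and gives the elliptic problem $A_h((\boldsymbol{u}_h,R_h(\boldsymbol{u}_h)),(\boldsymbol{w}_h,R_h(\boldsymbol{w}_h)))=l_h^{\rm tot}(\boldsymbol{w}_h)$. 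The Strang-type argument of part i) then applies verbatim:
\begin{itemize}
\item coercivity from Lemma~\ref{lem_cor};
\item boundedness from \eqref{bd_Ah} together with \eqref{norm_equal};
\item the identity \eqref{pro_bh_pi}, which is unchanged;
\item interpolation bounds \eqref{iter_err_new1}--\eqref{iter_err_new2}, which depend only on $\boldsymbol{u}\in\boldsymbol{X}$;
\item the new consistency lemma in place of Lemma~\ref{lem_consis}, which supplies the additional $\|g\|_{H^{1/2}(\Gamma)}$ term.
\end{itemize}

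\emph{Pressure estimate \eqref{nohom_error_2}.} We apply the inf-sup condition, Lemma~\ref{lem_inf_sup}, to $\pi_h^0 p^\star-\hat p_h\in Y_h$. First, the equality in \eqref{nohom_error_2} requires $\pi_h^0 p_h^J=0$, which holds because $p_h^J|_T$ has zero mean on every element by construction. Hence $\pi_h^0(p-p_h^J)=\pi_h^0 p$. For the bound itself, we must be careful. Whether $p$ or $p^\star$ is the natural reference depends on the identity $b_h(\boldsymbol{v}_h,\pi_h^0 p^J)=b_h(\boldsymbol{v}_h,p_h^J)$, and this identity does not hold for general $\boldsymbol{v}_h\in\boldsymbol{X}_h^{\rm IFE}$. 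We therefore work with $\pi_h^0p^\star-\hat p_h$ and redo the computation \eqref{pro_p_02}--\eqref{pro_iii0} with $p^\star$ in place of $p$. The steps are:
\begin{itemize}
\item Use \eqref{IFE_method_surface} to write $-b_h(\boldsymbol{v}_h,\hat p_h)=A_h(\cdots)-l_h^{\rm tot}(\boldsymbol{v}_h)$.
\item Expand $l_h(\boldsymbol{v}_h)$ via $\boldsymbol{f}=-\nabla\cdot\boldsymbol{\sigma}(\mu,\boldsymbol{u},R(\boldsymbol{u})+p^J)-\nabla\hat p$ piecewise.
\item Cancel the $\hat p$ contributions exactly as in \eqref{pro_p_02}, and cancel the $R(\boldsymbol{u})$ contributions via the $(\mathrm{III})_1=0$ identity \eqref{pro_iii3}.
\item The remaining terms are the consistency-type quantities $(\mathrm{III})_2$ and $(\mathrm{III})_3$, controlled by Lemma~\ref{lem_cons_0} and \eqref{nohom_error_1}, plus new $p^J$ terms:
\[
\sum_s\int_{\Omega^s}\nabla p^J\cdot\boldsymbol{\pi}_h^{\rm RT}\boldsymbol{v}_h-l^g(\boldsymbol{v}_h)+b_h(\boldsymbol{v}_h,p_h^J).
\]
\end{itemize}

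\emph{Main obstacle.} The hardest step is bounding these new $p^J$ terms by $Ch\|g\|_{H^{1/2}(\Gamma)}\|\boldsymbol{v}_h\|_{1,h}$ for \emph{all} $\boldsymbol{v}_h\in\boldsymbol{X}_h^{\rm IFE}$, not only for $\boldsymbol{v}_h\in\boldsymbol{Z}_h$. The plan has three parts:
\begin{itemize}
\item Integrate $\nabla p^J\cdot\boldsymbol{\pi}_h^{\rm RT}\boldsymbol{v}_h$ by parts on $\Omega^\pm$. The interface term $\int_\Gamma [p^J]\boldsymbol{\pi}_h^{\rm RT}\boldsymbol{v}_h\cdot\boldsymbol{n}$ partially cancels $l^g$, leaving $\int_\Gamma g\,\boldsymbol{n}\cdot(\boldsymbol{\pi}_h^{\rm RT}\boldsymbol{v}_h-\boldsymbol{v}_h)$.
\item The volume term becomes $-\int p^J\nabla\cdot\boldsymbol{\pi}_h^{\rm RT}\boldsymbol{v}_h=-\int \pi_h^0p^J\,\nabla_h\!\cdot\boldsymbol{v}_h$. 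Combined with $b_h(\boldsymbol{v}_h,p_h^J)$, it produces $b_h(\boldsymbol{v}_h,p_h^J+\pi_h^0p^J-p^J)$ plus edge terms. These are handled exactly as $(\mathrm{IV})_3$ using \eqref{appro_p_h^J}--\eqref{appro_p_h^J_pm}. Here the volume part $\int(p^J-\pi_h^0p^J-p_h^J)\nabla_h\cdot\boldsymbol{v}_h$ is no longer zero, but it is bounded by Cauchy--Schwarz with \eqref{appro_p_h^J}.
\item For the interface residual $\int_\Gamma g\,\boldsymbol{n}\cdot(\boldsymbol{\pi}_h^{\rm RT}\boldsymbol{v}_h-\boldsymbol{v}_h)$, first apply the divergence theorem on $\Omega^-$, giving $\int_{\Omega^-}\nabla w^-\cdot(\boldsymbol{\pi}_h^{\rm RT}\boldsymbol{v}_h-\boldsymbol{v}_h)+w^-\nabla_h\cdot(\cdots)$ plus interior edge terms. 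This works because $\boldsymbol{\pi}_h^{\rm RT}\boldsymbol{v}_h-\boldsymbol{v}_h$ has zero mean normal flux on each edge and its broken divergence has zero elementwise mean. Then bound each piece with the $L^2$ RT estimate $Ch\|\boldsymbol{v}_h\|_{1,h}$ and the edge-jump bounds used for $(\mathrm{IV})_3$.
\end{itemize}
If this direct route fails, we would fall back on the mixed-method (Brezzi) estimate for the pair $(\boldsymbol{u}_h,\hat p_h)$.

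\emph{Full pressure \eqref{nohom_error_3}.} Write
\[
p-p_h=\bigl(p^\star-R_h(\boldsymbol{u}_h)-\pi_h^0p^\star\bigr)+\bigl(\pi_h^0p^\star-\hat p_h\bigr)+\bigl(p^J-\pi_h^0p^J-p_h^J\bigr).
\]
The first term is bounded as in part iii) of Theorem~\ref{theo_ulti}, using $\|p^\star\|_{H^1(\cup\Omega^\pm)}\le\|p\|_{H^1(\cup\Omega^\pm)}+C\|g\|_{H^{1/2}(\Gamma)}$ from \eqref{esti_pJ}. The second term is bounded by the estimate above. The third term is bounded by \eqref{appro_p_h^J}.
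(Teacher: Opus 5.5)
Your velocity argument for \eqref{nohom_error_1} is fine and is what the paper has in mind. The pressure part has a genuine gap: you set out to bound the wrong quantity. Since $\pi_h^0R_h(\boldsymbol{u}_h)=0$ and $\pi_h^0p_h^J=0$, you have $\hat p_h=\pi_h^0p_h$. So $\hat p_h$ approximates $\pi_h^0p=\pi_h^0p^\star+\pi_h^0p^J$, not $\pi_h^0p^\star$. The discrepancy $\pi_h^0p^J$ tends to $p^J\neq0$ in $L^2(\Omega)$ whenever $g\neq0$, so $\|\pi_h^0p^\star-\hat p_h\|_{L^2(\Omega)}$ is $O(1)$, and no argument can make it $O(h)$.

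This is exactly where your ``main obstacle'' step fails. With the correct sign $\boldsymbol{f}=-\nabla\cdot\boldsymbol{\sigma}(\mu,\boldsymbol{u},R(\boldsymbol{u})+p^J)+\nabla\hat p$ (you wrote $-\nabla\hat p$), the new terms are
\[
-\sum_{s=\pm}\int_{\Omega^s}\nabla p^J\cdot\boldsymbol{\pi}_h^{\rm RT}\boldsymbol{v}_h-l^g(\boldsymbol{v}_h)+b_h(\boldsymbol{v}_h,p_h^J)
=\int_\Omega\pi_h^0p^J\,\nabla_h\cdot\boldsymbol{v}_h+\int_\Gamma g\,\boldsymbol{n}\cdot(\boldsymbol{v}_h-\boldsymbol{\pi}_h^{\rm RT}\boldsymbol{v}_h)+b_h(\boldsymbol{v}_h,p_h^J).
\]
The first term equals $-b_h(\boldsymbol{v}_h,\pi_h^0p^J)$. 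By Lemma~\ref{lem_inf_sup}, its supremum over $\boldsymbol{X}_h^{\rm IFE}$, normalized by $\|\boldsymbol{v}_h\|_{1,h}$, is at least $\beta\|\pi_h^0p^J\|_{L^2(\Omega)}$.

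This term is not absorbed as you claim. The expression $b_h(\boldsymbol{v}_h,p_h^J+\pi_h^0p^J-p^J)$ consists of interface-edge integrals only. Its volume part, which you propose to bound by Cauchy--Schwarz, vanishes identically because $\nabla_h\cdot\boldsymbol{v}_h\in P_0(\mathcal{T}_h)$. Your divergence-theorem treatment of the $\Gamma$-integral produces no volume term either, since $\nabla_h\cdot(\boldsymbol{v}_h-\boldsymbol{\pi}_h^{\rm RT}\boldsymbol{v}_h)=0$.

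The same slip is hidden in your splitting of $p-p_h$: your three terms add up to $p-p_h-\pi_h^0p^J$, not $p-p_h$. You also never relate $\pi_h^0p^\star-\hat p_h$ to the quantity $\pi_h^0p-\hat p_h$ that \eqref{nohom_error_2} actually concerns.

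The repair is to follow part ii) of Theorem~\ref{theo_ulti} with the full $p$, which is what the paper's ``same lines'' means: apply Lemma~\ref{lem_inf_sup} to $\pi_h^0p-\hat p_h\in Y_h$. Then \eqref{pro_p_02} contributes the extra term
\[
b_h(\boldsymbol{\pi}_h^{\rm RT}\boldsymbol{v}_h,p^J)=-\int_\Omega p^J\nabla\cdot\boldsymbol{\pi}_h^{\rm RT}\boldsymbol{v}_h=-\int_\Omega\pi_h^0p^J\,\nabla_h\cdot\boldsymbol{v}_h,
\]
which cancels the offending term exactly. What remains is $\int_\Gamma g\,\boldsymbol{n}\cdot(\boldsymbol{v}_h-\boldsymbol{\pi}_h^{\rm RT}\boldsymbol{v}_h)+b_h(\boldsymbol{v}_h,p_h^J)$, and your divergence-theorem device on $\Omega^-$ is the right tool for it. Using $\nabla_h\cdot(\boldsymbol{v}_h-\boldsymbol{\pi}_h^{\rm RT}\boldsymbol{v}_h)=0$, $[\boldsymbol{\pi}_h^{\rm RT}\boldsymbol{v}_h]\cdot\boldsymbol{n}_e=0$ and $p^J=w-c$, it equals
\[
\int_{\Omega^-}\nabla w^-\cdot(\boldsymbol{v}_h-\boldsymbol{\pi}_h^{\rm RT}\boldsymbol{v}_h)-\sum_{e\in\mathcal{E}_h^{non}}\int_e p^J[\boldsymbol{v}_h]\cdot\boldsymbol{n}_e-\sum_{e\in\mathcal{E}_h^\Gamma}\int_e\{p^J-\pi_h^0p^J-p_h^J\}[\boldsymbol{v}_h]\cdot\boldsymbol{n}_e .
\]
Each piece is bounded by $Ch\|g\|_{H^{1/2}(\Gamma)}\|\boldsymbol{v}_h\|_{1,h}$; the last one is handled exactly as $(\mathrm{IV})_3$. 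Finally, \eqref{nohom_error_3} follows from the corrected splitting $p-p_h=(p^\star-R_h(\boldsymbol{u}_h)-\pi_h^0p^\star)+(\pi_h^0p-\hat p_h)+(p^J-\pi_h^0p^J-p_h^J)$.
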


\section{Numerical Experiments}\label{sec_num}
In this section, we present several numerical experiments to validate the theoretical analysis. The computational domain is taken as $\Omega = (-1,1)\times(-1,1)$, and uniform triangulations are constructed by dividing the domain into $N \times N$ congruent rectangles. Each rectangle is then split along a fixed diagonal direction to form a conforming triangulation of triangles.
We compute the following relative errors and their corresponding convergence rates on a sequence of uniform triangulations:
  $e_0(\boldsymbol{u})=\frac{\|\boldsymbol{u}-\boldsymbol{u}_h\|_{L^2(\Omega)}}{\|\boldsymbol{u}\|_{L^2(\Omega)}},~ e_1(\boldsymbol{u})=\frac{\|\boldsymbol{u}-\boldsymbol{u}_h\|_{1,h}}{\|\boldsymbol{u}\|_{1,h}}, ~e_0(p)=\frac{\|p-p_h\|_{L^2(\Omega)}}{\|p\|_{L^2(\Omega)}}.$
 For comparison, we also compute the errors produced by the classical CR-IFE method, which corresponds to method \eqref{IFE_method} with the right-hand side $l_h(\boldsymbol{v}_h)$ replaced by
$
\int_\Omega \boldsymbol{f}\cdot \boldsymbol{v}_h.
$
All examples are tested with $\mu_{\min} := \min\{\mu^+, \mu^-\} = 1$. For a general value of $\mu_{\min}$, equations~\eqref{originalpb1} and~\eqref{jp_cond1} can be rescaled as
\[-\nabla\cdot (2 \mu_{\rm new}\boldsymbol{\epsilon}(\boldsymbol{u})) +\nabla p_{\rm new}=\boldsymbol{f}_{\rm new}\quad\mbox{ and }\quad [\boldsymbol{\sigma}(\mu_{\rm new},\boldsymbol{u},p_{\rm new})\boldsymbol{n}]_\Gamma=g_{\rm new}\boldsymbol{n},\]
where $\mu_{\rm new}=\mu/\mu_{\rm min}$, $p_{\rm new}=p/\mu_{\rm min}$, $g_{\rm new}=g/\mu_{\rm min}$, and $\boldsymbol{f}_{\rm new}=\boldsymbol{f}/\mu_{\rm min}$.

\textbf{Example 1.}
The interface is defined as $\Gamma = \{(x_1,x_2)^\top \in \mathbb{R}^2 : x_1^2 + x_2^2 = r_0^2\}$ with $r_0 = 0.5$. The exact solution $(\boldsymbol{u}, p)$ is prescribed for all $\boldsymbol{x} = (x_1, x_2)^\top$ by
\begin{equation*}
\boldsymbol{u}(\boldsymbol{x})=\left\{
\begin{aligned}
&\frac{r_0^2-|\boldsymbol{x}|^2}{\mu^-}\left(
\begin{array}{c}
-x_2\\
x_1
\end{array}
\right)~~~\mbox{ if }~ |\boldsymbol{x}|<r_0,\\
&\frac{r_0^2-|\boldsymbol{x}|^2}{\mu^+}\left(
\begin{array}{c}
-x_2\\
x_1
\end{array}
\right)~~~\mbox{ if }~  |\boldsymbol{x}|\geq r_0,\\
\end{aligned}\right.
~~\mbox{and }~~p(\boldsymbol{x})=(x_2^2-x_1^2)p_0.
\end{equation*}
The right-hand side $\boldsymbol{f}$ and the boundary condition $\boldsymbol{u}|_{\partial \Omega}$ are derived from the exact solution. It is easy to verify that  the exact solution satisfies the homogeneous interface conditions.
We set $\theta = -1$ and $\eta = 0$, and adopt a standard finite element approach to handle the nonhomogeneous Dirichlet boundary condition.
We test the example under various viscosity contrasts,  ranging from  small  to  large jumps: $\mu^+=5$, $\mu^-=1$; $\mu^+=1000$, $\mu^-=1$; $\mu^+=1$, $\mu^-=1000$, and with different pressure parameters: $p_0=1$, $p_0=10^6$.
The errors and convergence rates are reported in Tables~\ref{ex_biao1}–\ref{ex_biao3}. It can be observed from these tables that both the classical CR-IFE method and the pressure-robust CR-IFE method achieve optimal convergence rates. 
Moreover, the velocity errors obtained by the pressure-robust CR-IFE method remain unchanged when increasing $p_0$ from $1$ to $10^6$, thereby confirming the theoretical results in Theorem~\ref{theo_ulti}. We also test the $L^2$-norm of the elementwise divergence of the discrete velocity and find that it is zero up to machine precision.
In contrast, for the classical CR-IFE method, the velocity errors increase substantially as $p_0$ increases from $1$ to $10^6$.

\begin{table}[H]
\caption{Numerical results for Example 1 with  $\mu^+=5$, $\mu^-=1$.\label{ex_biao1}}
\begin{center}
{\small
\begin{tabular}{cc cc cc c}
  \hline
       $N$  &  $e_0(\boldsymbol{u})$   &  rate   &  $e_1(\boldsymbol{u})$  &  rate  &  $e_0(p)$  &  rate  \\ \hline
        \multicolumn{7}{c}{By the classical CR-IFE method,~$p_0=1$}  \\ \hline
          32  &    2.927E-03  &              &   5.051E-02 &              &   6.445E-02   &         \\ \hline
        64    &  7.313E-04    &  2.00    &  2.544E-02    &  0.99    &  3.206E-02     & 1.01\\ \hline
       128   &   1.875E-04   &   1.96   &   1.278E-02   &   0.99   &   1.599E-02    &  1.00\\ \hline
       256    &  4.696E-05   &   2.00   &   6.400E-03   &   1.00  &    7.983E-03    &  1.00\\ \hline
               \multicolumn{7}{c}{By the classical CR-IFE method,~$p_0=10^6$}  \\ \hline
        32 &     7.109E+02   &            &   7.431E+03  &                &   5.718E-02   &            \\ \hline
        64  &    1.779E+02  &    2.00  &    3.701E+03  &    1.01   &   2.857E-02   &   1.00 \\ \hline
       128  &    4.444E+01 &     2.00 &     1.845E+03  &    1.00   &   1.428E-02  &    1.00 \\ \hline
       256  &    1.110E+01 &     2.00  &    9.209E+02  &    1.00   &   7.141E-03   &   1.00 \\ \hline
                 \multicolumn{7}{c}{By the pressure-robust CR-IFE method,~$p_0=1$}  \\ \hline
        32   &   3.470E-03    &            &   6.022E-02   &             &  6.714E-02  &         \\ \hline
        64   &   8.692E-04    &  2.00   &   3.025E-02   &   0.99    &  3.335E-02  &    1.01\\ \hline
       128  &    2.200E-04   &   1.98  &    1.516E-02   &   1.00   &   1.662E-02  &    1.00\\ \hline
       256  &    5.501E-05   &   2.00  &    7.587E-03   &   1.00  &    8.300E-03   &   1.00\\ \hline
                        \multicolumn{7}{c}{By the pressure-robust CR-IFE method,~$p_0=10^6$}  \\ \hline
         32 &     3.470E-03  &             &   6.022E-02   &             &  5.706E-02   &        \\ \hline
        64  &    8.692E-04   &   2.00   &   3.025E-02   &   0.99   &   2.853E-02   &   1.00 \\ \hline
       128 &     2.200E-04  &    1.98  &    1.516E-02  &    1.00  &    1.426E-02  &    1.00 \\ \hline
       256 &     5.501E-05   &   2.00  &    7.587E-03   &   1.00  &    7.132E-03   &   1.00 \\ \hline
\end{tabular}
}
\end{center}
\end{table}

\begin{table}[H]
\caption{Numerical results for Example 1 with  $\mu^+=1000$, $\mu^-=1$.\label{ex_biao2}}
\begin{center}
{\small
\begin{tabular}{cc cc cc c}
  \hline
       $N$  &  $e_0(\boldsymbol{u})$   &  rate   &  $e_1(\boldsymbol{u})$  &  rate  &  $e_0(p)$  &  rate  \\ \hline
        \multicolumn{7}{c}{By the classical CR-IFE method,~$p_0=1$}  \\ \hline
        32 &     5.353E-01  &              &  5.426E-01   &            &    2.290E+00   &   \\ \hline
        64  &    2.215E-01  &    1.27   &   2.438E-01    &  1.15  &    1.919E+00   &   0.26\\ \hline
       128  &    6.979E-02  &    1.67  &    9.777E-02   &   1.32 &     1.176E+00  &    0.71\\ \hline
       256  &    2.056E-02  &    1.76  &    4.076E-02   &   1.26  &    6.442E-01   &   0.87\\ \hline
               \multicolumn{7}{c}{By the classical CR-IFE method,~$p_0=10^6$}  \\ \hline
        32  &    1.585E+02 &                &  2.232E+02  &              &    5.725E-02 &      \\ \hline
        64   &   6.919E+01  &    1.20    &  1.050E+02  &    1.09   &   2.859E-02  &    1.00\\ \hline
       128  &    2.258E+01  &    1.62   &   4.720E+01 &     1.15  &    1.429E-02 &     1.00\\ \hline
       256  &    6.722E+00   &   1.75   &   2.163E+01  &    1.13  &    7.143E-03 &     1.00\\ \hline
                 \multicolumn{7}{c}{By the pressure-robust CR-IFE method,~$p_0=1$}  \\ \hline
       32   &   5.347E-01    &             &  5.419E-01     &           &   2.287E+00    &         \\ \hline
        64  &    2.213E-01   &   1.27   &   2.435E-01    &  1.15  &    1.916E+00    &  0.26\\ \hline
       128 &     6.972E-02  &    1.67  &    9.767E-02   &   1.32  &    1.175E+00  &    0.71\\ \hline
       256  &    2.054E-02  &    1.76   &   4.073E-02   &   1.26  &    6.435E-01  &    0.87\\ \hline
                        \multicolumn{7}{c}{By the pressure-robust CR-IFE method,~$p_0=10^6$}  \\ \hline
         32  &    5.347E-01  &             &   5.419E-01  &             &   5.706E-02  &          \\ \hline
        64  &    2.213E-01   &   1.27   &   2.435E-01   &   1.15   &   2.853E-02  &    1.00\\ \hline
       128 &     6.972E-02  &    1.67  &    9.767E-02  &    1.32   &   1.426E-02 &     1.00\\ \hline
       256 &     2.054E-02   &   1.76  &    4.073E-02   &   1.26   &   7.132E-03  &    1.00\\ \hline
\end{tabular}
}
\end{center}
\end{table}

\begin{table}[H]
\caption{Numerical results for Example 1 with  $\mu^+=1$, $\mu^-=1000$.\label{ex_biao3}}
\begin{center}
{\small
\begin{tabular}{cc cc cc c}
  \hline
       $N$  &  $e_0(\boldsymbol{u})$   &  rate   &  $e_1(\boldsymbol{u})$  &  rate  &  $e_0(p)$  &  rate  \\ \hline
        \multicolumn{7}{c}{By the classical CR-IFE method,~$p_0=1$}  \\ \hline
        32  &    2.300E-02  &              &   5.583E-02  &              &   2.805E+01  &      \\ \hline      
        64  &    7.559E-03  &    1.61   &   2.598E-02  &    1.10   &   1.438E+01  &    0.96\\ \hline
       128 &     2.205E-03  &    1.78   &   1.209E-02 &     1.10  &    7.286E+00 &     0.98\\ \hline
       256 &     6.271E-04  &    1.81   &   5.722E-03 &     1.08  &    3.669E+00  &    0.99\\ \hline
               \multicolumn{7}{c}{By the classical CR-IFE method,~$p_0=10^6$}  \\ \hline
        32   &   6.903E+00   &            &    2.208E+01  &             &    5.800E-02  &           \\ \hline
        64  &    2.462E+00  &    1.49  &    1.037E+01  &    1.09  &    2.898E-02  &    1.00\\ \hline
       128 &     7.305E-01  &    1.75   &   4.907E+00  &    1.08  &    1.448E-02  &    1.00\\ \hline
       256  &    2.098E-01  &    1.80   &   2.358E+00   &   1.06  &    7.239E-03   &   1.00\\ \hline
                 \multicolumn{7}{c}{By the pressure-robust CR-IFE method,~$p_0=1$}  \\ \hline
        32  &    2.297E-02   &             &   5.579E-02 &              &    2.803E+01 &            \\ \hline
        64  &    7.551E-03   &   1.61   &   2.596E-02  &    1.10   &   1.436E+01  &    0.96\\ \hline
       128  &    2.203E-03  &    1.78  &    1.208E-02  &    1.10  &    7.279E+00 &     0.98\\ \hline
       256  &    6.265E-04  &    1.81  &    5.718E-03  &    1.08  &    3.666E+00  &    0.99\\ \hline
                        \multicolumn{7}{c}{By the pressure-robust CR-IFE method,~$p_0=10^6$}  \\ \hline
        32 &     2.297E-02   &           &     5.579E-02   &            &     5.706E-02 &             \\ \hline
        64  &    7.551E-03   &   1.61 &     2.596E-02   &   1.10  &    2.853E-02  &    1.00\\ \hline
       128  &    2.203E-03  &    1.78 &     1.208E-02  &    1.10  &    1.426E-02  &    1.00\\ \hline
       256  &    6.265E-04  &    1.81  &    5.718E-03   &   1.08  &    7.132E-03  &    1.00\\ \hline
\end{tabular}
}
\end{center}
\end{table}

\textbf{Example 2.} 
This example is designed to verify that the proposed pressure-robust CR-IFE method preserves the invariance property: modifying the body force by adding a gradient field affects only the pressure, not the velocity. We consider Example 1 with the force and pressure modified as $\boldsymbol{f} \rightarrow \boldsymbol{f} + \nabla \psi$ and $p \rightarrow p + \psi$, where $\psi = 10^6 x_1 x_2$.
The parameters are set as $\mu^+ = 5$, $\mu^- = 1$, and $p_0 = 1$. 
The numerical results in Table~\ref{ex_biao21} clearly demonstrate that the pressure-robust CR-IFE method yields substantially lower velocity errors than the classical CR-IFE method. 
Moreover, compared with Table~\ref{ex_biao1}, the velocity errors of the pressure-robust CR-IFE method remain unchanged despite the modification of the body force. This confirms the invariance property of the pressure-robust CR-IFE method.

\begin{table}[H]
\caption{Numerical results for Example 2. \label{ex_biao21}}
\begin{center}
{\small
\begin{tabular}{cc cc cc c}
  \hline
       $N$  &  $e_0(\boldsymbol{u})$   &  rate   &  $e_1(\boldsymbol{u})$  &  rate  &  $e_0(p)$  &  rate  \\ \hline
        \multicolumn{7}{c}{By the classical CR-IFE method}  \\ \hline
         32  &    4.158E+02  &             &   3.656E+03   &           &    3.707E-02   &           \\ \hline
        64   &   1.055E+02   &   1.98   &   1.839E+03    &  0.99  &    1.824E-02   &   1.02  \\ \hline
       128  &    2.649E+01  &    1.99  &    9.203E+02   &   1.00   &   9.061E-03  &    1.01  \\ \hline
       256  &    6.630E+00   &   2.00    &  4.600E+02   &   1.00     & 4.521E-03   &   1.00\\ \hline
                        \multicolumn{7}{c}{By the pressure-robust CR-IFE method }  \\ \hline
        32 &     3.470E-03  &             &    6.022E-02  &             &    3.607E-02   &          \\ \hline
        64 &     8.692E-04   &   2.00  &    3.025E-02  &    0.99   &   1.804E-02    &  1.00 \\ \hline
       128 &     2.200E-04  &    1.98  &    1.516E-02  &    1.00  &    9.021E-03  &    1.00 \\ \hline
       256 &     5.501E-05  &    2.00  &    7.587E-03  &    1.00  &    4.511E-03   &   1.00 \\ \hline
\end{tabular}
}
\end{center}
\end{table}

\textbf{Example 3.} 
Finally, we test the proposed method for the interface problem with surface tension. 
The exact velocity and the interface are the same as those in Example 1. 
The exact pressure is chosen as
\[
p(x_1,x_2) = (x_2^2 - x_1^2)p_0 + \psi(x_1,x_2) ,
\]
where $\psi|_{\Omega^+} = x_1 x_2$ and $\psi|_{\Omega^-} = \sin(x_1)\cos(x_2)$. Obviously, $p\in L_0^2(\Omega)$.
The viscosities are set as $\mu^+ = 5$ and $\mu^- = 1$. 
In this case, the interface condition is nonhomogeneous, i.e., $g=-[\psi]_\Gamma \neq 0$.

The numerical results are reported in Table~\ref{ex3_biao} and show that both the classical CR-IFE method and the pressure-robust CR-IFE method achieve optimal convergence rates, and that the velocity errors of the pressure-robust CR-IFE method remain essentially unchanged as $p_0$ increases from $1$ to $10^6$.

\begin{table}[H]
\caption{Numerical results for Example 3.\label{ex3_biao}}
\begin{center}
{\small
\begin{tabular}{cc cc cc c}
  \hline
       $N$  &  $e_0(\boldsymbol{u})$   &  rate   &  $e_1(\boldsymbol{u})$  &  rate  &  $e_0(p)$  &  rate  \\ \hline
        \multicolumn{7}{c}{By the classical CR-IFE method,~$p_0=1$}  \\ \hline
        32  &    2.965E-03   &            &     5.072E-02  &             &    5.551E-02   &           \\ \hline
        64   &   7.409E-04   &   2.00   &   2.554E-02   &   0.99   &   2.757E-02    &  1.01 \\ \hline
       128  &    1.899E-04  &    1.96  &    1.282E-02  &    0.99  &    1.375E-02   &   1.00 \\ \hline
       256  &    4.755E-05  &    2.00  &    6.425E-03  &    1.00  &    6.863E-03   &   1.00 \\ \hline
               \multicolumn{7}{c}{By the classical CR-IFE method,~$p_0=10^6$}  \\ \hline
        32  &    7.109E+02  &             & 7.431E+03     &             & 5.718E-02      &            \\ \hline
        64  &    1.779E+02  &    2.00  &    3.701E+03  &    1.01  &    2.857E-02   &   1.00  \\ \hline
       128 &     4.444E+01 &     2.00  &    1.845E+03  &    1.00 &     1.428E-02  &    1.00  \\ \hline
       256 &     1.110E+01  &    2.00   &   9.209E+02   &   1.00  &    7.141E-03   &   1.00  \\ \hline
                 \multicolumn{7}{c}{By the pressure-robust CR-IFE method,~$p_0=1$}  \\ \hline
        32  &    3.478E-03  &               & 6.025E-02    &             & 5.718E-02      &           \\ \hline
        64  &    8.711E-04   &   2.00    &  3.026E-02    &  0.99    &  2.843E-02    &  1.01 \\ \hline
       128 &     2.205E-04  &    1.98   &   1.516E-02   &   1.00   &   1.418E-02  &    1.00      \\ \hline
       256 &     5.514E-05  &    2.00   &   7.588E-03   &   1.00    &  7.079E-03  &    1.00  \\ \hline
                        \multicolumn{7}{c}{By the pressure-robust CR-IFE method,~$p_0=10^6$}  \\ \hline
        32 &    3.478E-03   &              &  6.025E-02    &             & 5.706E-02    &            \\ \hline
        64 &     8.711E-04   &   2.00    &  3.026E-02    &  0.99    &  2.853E-02   &   1.00 \\ \hline
      128 &     2.205E-04   &   1.98    &  1.516E-02    &  1.00    &  1.426E-02   &   1.00     \\ \hline
      256 &     5.514E-05   &   2.00    &  7.588E-03   &   1.00     & 7.132E-03    &  1.00 \\ \hline
     \end{tabular}
}
\end{center}
\end{table}

 \section{Conclusions}\label{sec_con}
In this paper, we have developed and analyzed a pressure-robust IFE method based on the CR element for solving the Stokes interface problem on unfitted meshes. On each interface element, the shape functions for velocity and pressure are jointly modified to satisfy the coupled interface jump conditions, resulting in a coupling between velocity and pressure in the IFE space.
To enhance pressure robustness, we adopt a recently developed approach that replaces the test function on the right-hand side of the discrete formulation with its divergence-conforming reconstruction. Although the velocity and pressure remain coupled in the IFE space, our analysis rigorously proves that this approach still ensures pressure robustness. Furthermore, we establish the stability and optimal error estimates of the proposed method with constants independent of the interface position relative to the mesh.
Numerical experiments were performed to validate the theoretical results. The simulations demonstrate optimal convergence rates in both velocity and pressure, and illustrate pressure robustness.

Future work will focus on extending the proposed method to three-dimensional settings and higher-order elements.
The extension to three dimensions is relatively straightforward from an algorithmic perspective by following the strategy developed in \cite{ji2025mini}.
However, the pressure-robust analysis becomes more challenging due to the increased complexity of the interface geometry and the resulting geometric approximation errors. 
For example, for a face \(F\) of an element, the exact subregions
\(F^\pm:=F\cap\Omega^\pm\) and their discrete counterparts
\(F_h^\pm:=F\cap\Omega_h^\pm\) generally do not coincide, resulting in additional difficulties in the pressure-robust analysis. 
This issue is specific to the three-dimensional case and does not arise in two dimensions.

A higher-order IFE method for Stokes interface problems was proposed in \cite{chen2021p2}. Nevertheless, to the best of our knowledge, the development of pressure-robust IFE methods in the higher-order setting remains an open problem. 
A fundamental difficulty is that a piecewise-linear approximation of the interface is generally insufficient to retain higher-order accuracy. The development of accurate geometric representations and compatible basis functions, together with a pressure-robust formulation in this setting, constitutes an interesting direction for future research.

\noindent\textbf{Funding}~
H. Ji was supported by the NSFC Grant 12371370, Ministry of Education Key Laboratory of
NSLSCS key project 202402, and the NSF of NJUPT Grant NY225138. F. Wang was supported by the NSFC Grants 12371369, 12071227 and the National Key Research and Development Program of China 2020YFA0713803.

\bibliographystyle{plain}

\end{document}